\documentclass[reqno,12pt,letterpaper]{amsart}

\RequirePackage{amsmath,amssymb,amsthm,graphicx,mathrsfs,url}
\RequirePackage[usenames,dvipsnames]{color}
\RequirePackage[colorlinks=true,linkcolor=Red,citecolor=Green]{hyperref}
\RequirePackage{amsxtra}
\usepackage{verbatim}

\DeclareOption{proof}{%
  \setlength{\marginparwidth}{0.6in}%
  \def\?[#1]{\textbf{[#1]}\marginpar{\Large{\textbf{??}}}}%
}
\ProcessOptions\relax

\setlength{\textheight}{8.50in} \setlength{\oddsidemargin}{0.00in}
\setlength{\evensidemargin}{0.00in} \setlength{\textwidth}{6.08in}
\setlength{\topmargin}{0.00in} \setlength{\headheight}{0.18in}
\setlength{\marginparwidth}{1.0in}
\setlength{\abovedisplayskip}{0.2in}
\setlength{\belowdisplayskip}{0.2in}
\setlength{\parskip}{0.05in}

\DeclareGraphicsRule{*}{mps}{*}{}


\numberwithin{equation}{section}

\def\arXiv#1{\href{http://arxiv.org/abs/#1}{arXiv:#1}}

\newtheorem{theo}{Theorem}
\newtheorem{prop}{Proposition}[section]
\newtheorem{defi}[prop]{Definition}
\newtheorem{lemm}[prop]{Lemma}
\newtheorem{corr}[prop]{Corollary}

\def\Remark{\noindent\textbf{Remark.}\ }
\def\Remarks{\noindent\textbf{Remarks.}\ }


\DeclareMathOperator{\comp}{comp}

\DeclareMathOperator{\Ell}{ell}

\DeclareMathOperator{\Id}{Id}

\let\Im=\Imag 

\DeclareMathOperator{\Op}{Op}


\DeclareMathOperator{\sgn}{sgn}

\DeclareMathOperator{\Spec}{Spec}
\DeclareMathOperator{\supp}{supp}

\DeclareMathOperator{\WF}{WF}
\DeclareMathOperator{\essspt}{ess-spt}


\newcommand{\RR}{{\mathbb R}}
\newcommand{\NN}{{\mathbb N}}
\newcommand{\CC}{{\mathbb C}}


\title[scattering matrix]%
{The scattering matrix for 0th order pseudodifferential operators}
\author{Jian Wang }
\email{wangjian@berkeley.edu}
\address{Department of Mathematics, University of California, Berkeley,
CA 94720}

\begin{document}

\begin{abstract}
    We use microlocal radial estimates to prove the full limiting absorption principle for $P$, a self-adjoint 0th order pseudodifferential operator satisfying hyperbolic dynamical assumptions as of Colin de Verdi\`ere and Saint-Raymond. We define the scattering matrix for $P-\omega$ with generic $\omega\in \RR$ and show that the scattering matrix extends to a unitary operator on appropriate $L^2$ spaces. 
    After conjugation with natural reference operators, the scattering matrix becomes a $0$th order Fourier integral operator with a canonical relation associated to the bicharacteristics of $P-\omega$.
    The operator $P$ gives a microlocal model of internal waves in stratified fluids as illustrated in the paper of Colin de Verdi\`ere and Saint-Raymond.
\end{abstract}

\maketitle

\section{Introduction}

In this paper we study an analog of the scattering theory for certain 0th order pseudodifferential operators. We define the scattering matrix for these operators and show the scattering matrix is unitary by proving a boundary pairing formula. We also study the microlocal structure of the scattering matrix.

With motivation coming from fluid mechanics, the evolution equation for such operators was recently studied by Colin de Verdi\`ere and Saint-Raymond \cite{2dwave}. They showed the singular formation at the attractive hyperbolic cycles of the (rescaled) Hamiltonian flow as time goes to infinity. Dyatlov and Zworski \cite{force} provided an alternative approach using tools from microlocal scattering theory and relaxed some assumptions of \cite{2dwave} (vanishing of the subprincipal symbol, covering the base manifold by the characteristic surface). Operators with generic Morse-Smale Hamiltonian flow on surfaces and operators on higher dimensional manifolds were investigated by Colin de Verdi\`ere in \cite{colin}. In this paper, we study the stationary states of $P-\omega$.

\subsection{Main results}
\label{mainresult}

Let $M$ be a closed surface. Suppose $P$ is a pseudodifferential operator that satisfies assumptions in \S \ref{assumption}, $\omega \in \mathbb R$ satisfies assumptions in \S \ref{eigenvalue}. Let $\Lambda_{\omega}^{\pm}$ be Lagrangian submanifolds defined in \S \ref{assumption}.
Let $\mathscr{A}(P,\omega)\subset L^2(M)$ be the eigenspace of $P$ with eigenvalue $\omega$ and $\mathscr{D}^{\prime}_{\perp}(P,\omega)$ be the orthogonal complement of $\mathscr{A}(P,\omega)$ in $\mathscr{D}^{\prime}(M)$: 
\begin{equation}
\begin{split}
\label{orthogonal}
    \mathscr{D}^{\prime}_{\perp}(P,\omega):=\{ u\in\mathscr{D}^{\prime}(M): & \langle u,f \rangle=0, \text{if}~~~~ f\in \mathscr{A}(P,\omega) \}.
\end{split}\end{equation}
Here $\langle \cdot,\cdot \rangle$ is the sesquilinear pairing between distributions and smooth functions (that is it coincides with $L^2$ pairing on functions). As we will see in \S \ref{ev}, $\mathscr{A}(P,\omega)\subset C^{\infty}(M)$, hence $\mathscr{D}^{\prime}_{\perp}(P,\omega)$ is well-defined. One can see that $\mathscr{D}^{\prime}_{\perp}(P,\omega)=\mathscr{D}^{\prime}(M)$ if and only if $\omega\notin \Spec_{\rm{pp}}(P)$.
We consider the equation
\begin{equation}
\label{eq: equation}
    (P-\omega)u=0,\quad u\in \mathscr{D}^{\prime}_{\perp}(P,\omega)
\end{equation}
where $u$ admits a decomposition
\begin{equation}
\label{eq: decomp}
    u=u^-+u^+, \quad u^{\pm}\in I^0(\Lambda_{\omega}^{\pm}).
\end{equation}
We denote the set of distributions satisfying \eqref{eq: equation} and \eqref{eq: decomp} by $\mathcal{Z}(P,\omega)$. 
We also denote a set of microlocal solutions in $\mathscr{D}^{\prime}_{\perp}(P,\omega)$ by $D^{\pm}(P,\omega)$:
\begin{equation}
    D^{\pm}(P,\omega):=\{ u\in I^0(\Lambda_{\omega}^{\pm}): (P-\omega)u\in C^{\infty}(M) \}\cap \mathscr{D}^{\prime}_{\perp}(P,\omega)
\end{equation}
and put
\begin{equation}
    \mathcal{D}^{\pm}(P,\omega):=D^{\pm}(P,\omega)/\left(C^{\infty}(M)\cap \mathscr{D}^{\prime}_{\perp}(P,\omega)\right).
\end{equation}

\begin{theo}
\label{theorem1}
Suppose $P\in \Psi^0(M)$, $\omega\in \RR$ satisfy assumptions in \S \ref{assumption} and \S \ref{eigenvalue}. Let $d$ be the number of connected components of $\Lambda^{\pm}_{\omega}$. Then there exist maps
\begin{equation}
H_{\omega,0}^{\pm}: C^{\infty}(\mathbb S^1;\CC^{d})
 \rightarrow \mathcal{D}^{\pm}(P,\omega),
\end{equation}
\begin{equation}
\mathbf{S}_{\omega}: C^{\infty}(\mathbb S^1;\CC^{d})
 \rightarrow C^{\infty}(\mathbb S^1; \CC^{d})
\end{equation}
such that
\begin{enumerate}
    \item The maps $H_{\omega,0}^{\pm}$ are linear and invertible;
    \item For any $u\in \mathcal{Z}(P,\omega)$, there exists unique $f^{\pm}\in C^{\infty}(\mathbb S^1;\CC^{d})$ satisfying
        \begin{equation}
        \label{thmdecom}
            u\in H_{\omega,0}^{-}(f^-)+H_{\omega,0}^+(f^+);
        \end{equation}
    \item For any $f^-\in C^{\infty}(\mathbb S^1;\CC^{d})$ there exists a unique $f^+\in C^{\infty}(\mathbb S^1;\CC^{d})$ such that there exist $u^{\pm}\in H_{\omega,0}^{\pm}(f^{\pm})$ satisfying
            \begin{equation}
                u^-+u^+\in\mathcal{Z}(P,\omega);
            \end{equation}
    \item If $f^{\pm}\in C^{\infty}(\mathbb S^1;\CC^{d})$ satisfy (2), then
        \begin{equation}
            \mathbf{S}_{\omega}(f^-)=f^+;
        \end{equation}
    \item The map $\mathbf{S}_{\omega}$ can be extended to a unitary operator on $L^{2}(\mathbb S^1;\CC^{d})$.
\end{enumerate}
\end{theo}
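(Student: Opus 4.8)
\emph{Strategy.} The plan is to construct the maps $H^{\pm}_{\omega,0}$ and $\mathbf S_\omega$ symbolically, and then deduce the unitarity in (5) from a boundary pairing identity — a microlocal flux conservation law that uses the self-adjointness of $P$. I use repeatedly that a distribution $v\in I^0(\Lambda^{\pm}_\omega)$ is determined modulo $C^\infty(M)$ by its principal symbol; that $(P-\omega)v\in C^\infty(M)$ is equivalent to that symbol satisfying the transport equation along the rescaled Hamilton flow inside $\Lambda^{\pm}_\omega$; and that solutions of this transport equation are in bijection with their Cauchy data on the $d$ transversal circles, i.e.\ with $C^\infty(\mathbb S^1;\CC^d)$ — all as developed earlier in the paper. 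The other ingredient is the full limiting absorption principle for $P$ on $\mathscr D'_\perp(P,\omega)$, with the wavefront control coming from the microlocal radial estimates.

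\emph{Parts (1) and (2).} Given $f^{\pm}\in C^\infty(\mathbb S^1;\CC^d)$, solve the transport equation with Cauchy data $f^{\pm}$, realize the solution as the principal symbol of some $\tilde u^{\pm}\in I^0(\Lambda^{\pm}_\omega)$, improve it iteratively (the obstruction at each order is a function along the trajectories which the transport operator solves away), obtaining $(P-\omega)\tilde u^{\pm}\in C^\infty(M)$, and set $u^{\pm}:=\tilde u^{\pm}-\Pi\tilde u^{\pm}$ with $\Pi$ the smoothing projection onto $\mathscr A(P,\omega)$, so that $u^{\pm}\in D^{\pm}(P,\omega)$ with unchanged symbol; put $H^{\pm}_{\omega,0}(f^{\pm}):=[u^{\pm}]\in\mathcal D^{\pm}(P,\omega)$. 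This is linear, and invertible because the class in $\mathcal D^{\pm}(P,\omega)$ remembers exactly the principal symbol, from which the Cauchy data is recovered by integrating the transport equation backwards along the flow. For (2), given $u=u^-+u^+\in\mathcal Z(P,\omega)$, the obstruction $g:=(P-\omega)u^-=-(P-\omega)u^+$ has wavefront set in $\overline{\Lambda^-_\omega}\cap\overline{\Lambda^+_\omega}$ (the bicharacteristics joining the cycles, and the cycles themselves), so by the microlocal solvability of $P-\omega$ there — real principal type along the connecting trajectories, radial estimates at the cycles — one finds $c$ with wavefront in $\overline{\Lambda^-_\omega}\cap\overline{\Lambda^+_\omega}$ and $(P-\omega)c=g$ modulo $C^\infty(M)$; then $a:=u^--c\in D^-(P,\omega)$, $b:=u^++c\in D^+(P,\omega)$ with $a+b=u$ (after subtracting a smooth projection onto $\mathscr A(P,\omega)$), which defines $f^{\pm}$ via $H^{\pm}_{\omega,0}$. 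Uniqueness of $f^{\pm}$ reduces to: a $w\in I^0(\Lambda^+_\omega)$ (resp.\ $I^0(\Lambda^-_\omega)$) with $(P-\omega)w=0$ and $w\in\mathscr D'_\perp(P,\omega)$ is smooth — the Rellich-type uniqueness at the attractive (resp.\ repulsive) radial set, obtained from the radial estimate with its threshold sign, and precisely the point of passing to $\mathscr D'_\perp(P,\omega)$.

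\emph{Parts (3) and (4).} Given $f^-$, choose $u^-\in H^-_{\omega,0}(f^-)$, put $F:=(P-\omega)u^-\in C^\infty(M)$, and set $u^+:=-R(\omega+i0)F$ using the limiting absorption principle on $\mathscr D'_\perp(P,\omega)$; the radial estimates give $\WF(u^+)\subset\Lambda^+_\omega$ and $u^+\in I^0(\Lambda^+_\omega)$ at the threshold order, while $(P-\omega)u^+=-F$, so $u^-+u^+\in\mathcal Z(P,\omega)$. Then $f^+:=(H^+_{\omega,0})^{-1}[u^+]$ works, and its uniqueness is once more the Rellich-type uniqueness above. Setting $\mathbf S_\omega(f^-):=f^+$ gives a well-defined linear map, and (4) holds by construction.

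\emph{Part (5).} I claim the boundary pairing formula
\[
\langle f_1^+,f_2^+\rangle_{L^2(\mathbb S^1;\CC^d)}=\langle f_1^-,f_2^-\rangle_{L^2(\mathbb S^1;\CC^d)}
\]
for $u_j=u_j^-+u_j^+\in\mathcal Z(P,\omega)$ with data $f_j^{\pm}$ ($j=1,2$), the symbol-to-Cauchy-data identification being normalized by the flow-invariant density on the cycles. Granting it, $\langle\mathbf S_\omega f_1^-,\mathbf S_\omega f_2^-\rangle=\langle f_1^-,f_2^-\rangle$ for all $f_j^-\in C^\infty(\mathbb S^1;\CC^d)$, so $\mathbf S_\omega$ is an isometry on a dense subspace of $L^2(\mathbb S^1;\CC^d)$; running Parts (1)--(4) with $+$ and $-$ interchanged gives an isometric inverse, so $\mathbf S_\omega$ has dense — hence, being isometric, closed, hence full — range and extends uniquely to a unitary operator on $L^2(\mathbb S^1;\CC^d)$. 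To prove the formula one starts from the formal relation $\langle[P,B]u_1,u_2\rangle=0$ (valid since $(P-\omega)u_j=0$) for self-adjoint properly supported $B\in\Psi^0(M)$; the difficulty is that $u_j\in H^s(M)$ only for $s<-1/2$, so this pairing only marginally fails to converge. One therefore works in the anisotropic Sobolev spaces of the radial estimates — where $u_j$ carries extra regularity off the cycles and exactly the threshold regularity at them — and takes $B$ to be the microlocal weight from the positive-commutator argument, supported near the radial sets and block-diagonal between $\Lambda^-_\omega$ and $\Lambda^+_\omega$; the principal symbol of $\tfrac1i[P,B]$ at the cycles then computes the flux of $u_j$ through them, the source and sink cycles contributing with opposite signs, the off-diagonal ($\Lambda^-_\omega$ against $\Lambda^+_\omega$) terms vanishing in the limit since the relevant operator products have no wavefront there, and self-adjointness of $P$ making the subprincipal symbol real so that transport along the flow is flux-preserving and the flux through each cycle equals the corresponding squared $L^2$ norm of its Cauchy data; collecting signs gives the identity. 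The main obstacle is exactly this last step — upgrading the formal conservation law to an honest identity at the borderline regularity — which is what the full limiting absorption principle and the microlocal radial estimates are built to supply.
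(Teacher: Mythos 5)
Your overall architecture is the same as the paper's: build $H^{\pm}_{\omega,0}$ by solving the transport equation on $\Lambda^{\pm}_{\omega}$ with Cauchy data on the $d$ transversal circles and correcting iteratively (Borel summation), recover the data from the leading behaviour of the principal symbol for invertibility, produce the outgoing partner with the Poisson-type operator $u^{+}=-(P-\omega-i0)^{-1}(P-\omega)u^{-}$ via the limiting absorption principle together with the Lagrangian regularity of the outgoing resolvent, and obtain unitarity from a boundary pairing (commutator/flux) identity that rests on the self-adjointness of $P$. Two of your choices differ from the paper in route rather than substance: you prove uniqueness in (2)--(3) by a Rellich-type statement at the radial sets (threshold radial estimate plus high-regularity estimates, which is exactly the machinery of Lemma \ref{regularitylift} and \eqref{eq: highreg}), whereas the paper derives uniqueness from the boundary pairing formula itself (Lemma \ref{unique}); and you absorb embedded eigenvalues inline by subtracting the smooth finite-rank projection onto $\mathscr{A}(P,\omega)$, whereas the paper conjugates to $P_{\perp}=P\Pi$ in \S \ref{ev} -- your ``limiting absorption principle on $\mathscr{D}^{\prime}_{\perp}(P,\omega)$'' is precisely what that smoothing perturbation supplies and should be stated as such rather than assumed.

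Two points need repair or substantial expansion. First, in Part (2) your claim that $g=(P-\omega)u^{-}$ has wavefront set on the bicharacteristics joining the cycles is wrong: since $\WF((P-\omega)u^{\mp})\subset\Lambda^{\mp}_{\omega}$ and $\Lambda^{-}_{\omega}\cap\Lambda^{+}_{\omega}=\emptyset$, one has $(P-\omega)u^{\pm}\in C^{\infty}(M)$ outright (this is Lemma \ref{bdefi}), so the whole detour through ``microlocal solvability along connecting trajectories'' addresses a non-issue; had the premise been true, the correction $c$ would have wavefront off $\Lambda^{\pm}_{\omega}$ and $u^{-}-c$ would no longer lie in $I^{0}(\Lambda^{-}_{\omega})$, so the argument as written would not even produce elements of $D^{\pm}(P,\omega)$. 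The fix is simply the disjointness argument above plus uniqueness of the splitting modulo $C^{\infty}(M)$. Second, the boundary pairing identity is the crux of (5), and your sketch (anisotropic spaces, a positive-commutator weight near the radial sets) stops short of the actual computation: one must show that the flux term equals $\int_{K^{\pm}}G^{\pm}(u_1^{\pm})\overline{G^{\pm}(u_2^{\pm})}\,\mu^{\pm}$, which in the paper comes from an explicit evaluation of $\langle [P,\chi(hD)]u_1,u_2\rangle$ for a high-frequency cutoff $\chi$ by stationary phase on the Lagrangians (Lemmas \ref{localfourier}--\ref{localpair}), the identification $\sigma(u^{\pm})=e^{-iW^{\pm}}(f(z)+O(|\xi|^{-1}))\sqrt{\nu^{\pm}}$ from the solved transport equation (so the unimodular factor $e^{-iW^{\pm}}$ drops out), and the flow-invariant factorization $\nu^{\pm}=\mu^{\pm}(z)\,dt$ giving $\int H_p\chi_h\,dt=\mp1$. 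Your flux-preservation heuristic is the right idea, but without this (or an equivalent) computation the identity $\langle f_1^{+},f_2^{+}\rangle=\langle f_1^{-},f_2^{-}\rangle$, and hence (5), is not yet proved.
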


Scattering matrices are studied in various mathematical settings. Part of the literature are listed here. The scattering matrices for potential scattering and black box scattering in $\RR^n$ for $n\geq 3$, $n$ odd, are presented in \cite[\S 3.7, \S 4.4]{res}. Melrose \cite{mel} studied the spectral theory for the Laplacian operator on asymptotically Euclidean spaces and showed the existence of the scattering matrix. Later Melrose and Zworski \cite{euclidean} proved that the scattering matrices in this setting are Fourier integral operators and the canonical relations are given by the geodesic flow at infinity. Vasy \cite{lr} studied the scattering matrices for long range potentials on asymptotically Euclidean spaces and proved their Fourier integral operator structure in a method that is different from the method used by Melrose and Zworski. The spectral and scattering theory for symbolic potentials of order zero on 2-dimensional asymptotically Euclidean manifolds was studied by Hassell, Melrose and Vasy in \cite{zero1} and \cite{zero2}. Connections between scattering matrix for asymptotically hyperbolic spaces and conformal geometry was studied by Graham and Zworski in \cite{conformal}.

To see that the operator defined in Theorem \ref{theorem1} is an analog of the usual scattering matrix, we briefly explain the scattering matrix for a compactly supported potential on the real line. (See \cite[\S 2.4]{res}. Note that the notation is slightly different.)

Suppose $V\in C^{\infty}_c(\RR)$, $P_0=-\partial_{x}^2+V(x)$.
We consider the equation
\begin{equation}
(P_0-\lambda^2)u=0, \quad \lambda>0.
\end{equation}
$P_0$ is a second order differential operator with principal symbol $p_0=\xi^2$. The characteristic surface $\Sigma_0$ of $P_0-\lambda^2$ is given by $\xi=\pm \lambda$ near $|x|=\infty$.  The Hamiltonian vector field $H_{p_0}=2\xi \partial_{\xi}$ and, near $|x|=\infty$, the flow generated by $H_{p_0}$ is
\begin{equation}\begin{split}
\label{p0flow}
e^{tH_{p_0}}(x_0,\pm\lambda)=(\pm2\lambda t+x_0, \pm\lambda), \quad |x_0|\gg 1.
\end{split}\end{equation}
We see that there are four ``radial limiting points'' of $\Sigma_0$ at the two ends of the real line: $L^{\epsilon_1,\epsilon_2}_0=(\epsilon_1\infty, \epsilon_2\lambda)$, $\epsilon_1, \epsilon_2=\pm$. The flow of $H_{P_0}$ travels from $L_0^{-,+}$, $L^{+,-}_0$ to $L_0^{+,+}$ and $L_0^{-,-}$. Near $|x|=\infty$, that is, when $|x|$ is sufficiently large, $V$ vanishes hence  we can solve
\begin{equation}\begin{split}
u(x)= & a^+e^{i\lambda x}+b^-e^{-i\lambda x}, \quad x \gg 1; \\
u(x)= & a^-e^{i\lambda x}+b^+e^{-i\lambda x}, \quad x \ll -1.
\end{split}\end{equation}
Note that in phase space $b^-e^{-i\lambda x}|_{x\gg 1}$ and $a^-e^{i\lambda x}|_{x\ll -1}$ (incoming solutions) are localized near $L_0^{+,-}$ and $L_0^{-,+}$ where $e^{tH_{p_0}}$ in \eqref{p0flow} flows out, while $a^{+}e^{i\lambda x}|_{x\gg 1}$ and $b^+e^{-i\lambda x}|_{x\ll -1}$ (outgoing solutions) are localized near $L_0^{+,+}$ and $L_0^{-,-}$ where $e^{tH_{p_0}}$ flows in. The scattering matrix $S_0$ is then defined by mapping the data of the solution near $L_0^{+,-}$ and $L_0^{-,+}$ to the ones near $L_0^{+,+}$ and $L_0^{-,-}$:
\begin{equation}
S_0: \RR^2\rightarrow \RR^2, \quad
\begin{pmatrix} a^-\\b^- \end{pmatrix}\mapsto \begin{pmatrix}a^+\\b^+\end{pmatrix}.
\end{equation}

In the setting of Theorem \ref{theorem1}, the rescaled Hamiltonian flow travels from $\Lambda_{\omega}^-$ to $\Lambda_{\omega}^+$ on the characteristic surface of $P-\omega$ at infinity. The smooth functions $f^{\pm}$ (analogous to $(a^{\pm}$, $b^{\pm})$ ) are ``data'' of the solutions and $H_{\omega,0}^{\pm}(f^{\pm})$ (``--'' for incoming and ``+'' for outgoing) , similar to $a^{\pm}e^{i\lambda x}$ and $b^{\pm}e^{-i\lambda x}$,  are ``microlocal solutions''. The ``scattering matrix'' $\mathbf{S}_{\omega}$ then maps the incoming data $f^-$ to the outgoing data $f^+$.

It is natural to ask about the microlocal structure of $\mathbf{S}_{\omega}$. In the case of scattering on the real line, the scattering matrix $S_0$ can be written as a sum of the identity map on $\mathbb S^0$ and an operator with integral kernel in $\mathbb S^0\times \mathbb S^0$ (see for example \cite[Theorem 2.11]{res} and the remark after \cite[Theorem 2.11]{res}). A less trivial example is the scattering matrix for potential scattering in $\RR^n$, when $n\geq 3$ is an odd number. In this case, the absolute scattering matrix (see \cite[Definition 3.40]{res}) $S_{\rm{abs}}(\lambda)$ can be written as 
\begin{equation}
    S_{\rm{abs}}(\lambda)=i^{n-1}J+A(\lambda)
\end{equation}
where $A(\lambda): \mathscr{D}^{\prime}(\mathbb S^{n-1})\rightarrow C^{\infty}(\mathbb S^{n-1})$ is a smoothing operator and $J: \mathscr{D}^{\prime}(\mathbb S^{n-1})\rightarrow \mathscr{D}^{\prime}(\mathbb S^{n-1})$ is defined by
    $J f(\theta)=f(-\theta)$
-- see \cite[Theorem 3.41]{res}. Thus $S_{\rm{abs}}(\lambda)$ is a Fourier integral operator of order $0$ associated to the canonical relation given by the geodesic flow, which is also the Hamiltonian flow of the Laplacian operator, on $T^*\mathbb S^{n-1}\setminus 0$ at distance $\pi$. Another example is the scattering matrix for a scattering metric on asymptotically Euclidean spaces. Melrose and Zworski \cite{euclidean} showed that the scattering matrix, $S(\lambda)$, of a scattering metric on an asymptotically Euclidean manifold $X$ is, for $\lambda\in \RR\setminus \{0\}$, a 0th order Fourier integral operator on $\partial X$ associated to the canonical diffeomorphism given by the geodesic flow at distance $\pi$ for the induced metric on $\partial X$. Vasy \cite{lr} generalized this result to long-range scattering metrices and showed the scattering matrices are Fourier integral operators of variable orders associated to the same canonical relation as of short-range scattering metrices.

For the scattering matrix $\mathbf{S}_{\omega}$ of a $0$th order pseudodifferential operator $P$ in this paper, the result is different but similar in spirit. For simplicity, we assume that the subprincipal symbol of $P$ vanishes. Let $\omega\in \RR$ be a fixed number satisfying assumptions in \S \ref{eigenvalue}.  We omit the $\omega$ subscript in the following discussion in this subsection to simplify the notation. The behavior of the bicharateristics of $P$ near the limit cycles (see \S \ref{assumption}) are complicated both because they approach the limit cycles in a fast spiral manner, and because the speed they approach the limit cycles are of different rates, when they move along the boundary of the compactified characteristic submanifold and along the Lagrangian submanifolds associated to the limit cycles. We will use special maps to absorb the tangled behavior of the bicharacteristics near the limit cycles. More precisely, we define the following maps:
\begin{defi}
\label{tpm}
Let $\mathbf{T}^{\pm}:  C^{\infty}(\mathbb S^1; \CC^d)  \rightarrow \mathscr{D}^{\prime}(\mathbb S^1; \CC^{d})$ be two linear maps defined by 
\begin{equation}\begin{split}
     \widehat{\mathbf{T}^{\pm}f_j}(k)  = e^{-i\theta(k/\lambda_j^{\pm})}\widehat{f}_j(k).
\end{split}\end{equation}
Here $\mathbb S^1=\RR/2\pi\mathbb Z$ and $\widehat{f}(k)$ is the $k$-th Fourier coefficient of the $2\pi$-periodic function $f$, $\{\lambda^{\pm}_j\}_{j=1}^d\subset \RR$ are the Lyapunov spectrum of the attractive ($+$) and the repulsive ($-$) limit cycles (see \cite[Chapter 2.1]{lyapunov}).
\end{defi}
It turns out that $\mathbf{T}^{\pm}$ are ``not so bad'' in the following sense:
since $|\widehat{\mathbf{T}^{\pm}f_j}(k)|=|\widehat{f}_j(k)|$, we know $\mathbf{T}^{\pm}$ map $C^{\infty}(\mathbb S^1;\CC^d)$ to $C^{\infty}(\mathbb S^1;\CC^d)$, $\mathscr{D}^{\prime}(\mathbb S^1;\CC^d)$ to $\mathscr{D}^{\prime}(\mathbb S^1;\CC^d)$, and $\mathbf{T}^{\pm}$ are unitary on $L^2(\mathbb S^1;\CC^d)$. Another property of $\mathbf{T}^{\pm}$ that is worth noting is that the definition of $\mathbf{T}^{\pm}$ depends only on the Lyapunov spectrum of the limit cycles of the rescaled Hamiltonian flow on the boundary of the characteristic submanifold of $P$ (see \S \ref{assumption}).

We identify distributions in $\mathscr{D}^{\prime}(\mathbb S^1;\CC^d)$ with distributions in $\mathscr{D}^{\prime}(\bigsqcup_d\mathbb S^1;\CC)$, where $\bigsqcup_d\mathbb S^1$ is the disjoint union of $d$ copies of $\mathbb S^1$.
Suppose $\Sigma_{\rm{hom}}:=p^{-1}(\omega)\subset T^*M\setminus 0$ is the characteristic submanifold of $P$, where $p$ is the principal symbol of $P$. Then in local coordinates associated to the normal form as in Lemma \ref{normal}, 
\begin{equation}
    \Sigma_{\rm{hom}}=\bigsqcup_d\{ (x,\xi)\in T^*(\RR\times \mathbb S^1)\setminus 0: \xi_2/\xi_1-\lambda_j^+x_1=0 \}.
\end{equation}
As we will see in \S \ref{nfams}, more specifically,  \eqref{symbolrestriction} and \eqref{distributionrestriction}, $\mathbf{T}^{\pm}$ gives an identification between the restriction of the microlocal solutions to $x_1=\pm 1$ and the restriction of the symbol to a cycle. 
It is then natural to identify the cotangent vectors on $\bigsqcup_d \mathbb S^1$ with cotangent vectors in $\Sigma_{\rm{hom}}\cap \{x_1=\pm 1\}$:
\begin{defi}
\label{jdef}
We define a map
\begin{equation}
    \mathbf{j}^+: \bigsqcup_d T^*\mathbb S^1\setminus 0\rightarrow \Sigma_{\rm{hom}}
\end{equation}
by putting
\begin{equation}
    \mathbf{j}^+(y,\eta)= (\pm 1,y,\eta/\lambda_j^+,\eta)
\end{equation}
when $\pm \eta>0$, $y$ is on the $j$-th copy of $\bigsqcup_d\mathbb S^1$.
Here $(\pm 1,y,\eta/\lambda^+_j,\eta)$ are cotangent vectors expressed in local coordinates associated to the normal form in Lemma \ref{normal}. A map $\mathbf{j}^-$ is defined in the same manner for the radial source.
\end{defi}

Now we use $\mathbf{T}^{\pm}$ to conjugate the scattering matrix.
\begin{defi}
\label{rels}
We define an operator $\mathbf{S}_{\rm{rel}}: C^{\infty}(\mathbb S^1;\CC^d)\rightarrow \mathscr{D}^{\prime}(\mathbb S^1;\CC^d)$ by putting
\begin{equation}
    \mathbf{S}_{\rm{rel}}:=(\mathbf{T}^{+})^*\mathbf{S}\mathbf{T}^{-}.
\end{equation}
\end{defi}

The complicated behavior of bicharateristics of $P$ near the limit cycles is now absorbed by $\mathbf{T}^{\pm}$. In any other region of the cotangent bundle, $P$ behaves as of real princicpal type (for the precise meaning, see \S \ref{ps}). Therefore one can expect $\mathbf{S}_{\rm{rel}}$ is a Fourier integral operator and the canonical relation is related to the bicharateristics of $P$. We describe the microlocal structure of $\mathbf{S}_{\rm{rel}}$ in the following theorem:
\begin{theo}
\label{theorem2}
Suppose $P\in \Psi^0(M)$ satisfies assumptions in \S \ref{assumption} and the subprincipal symbol of $P$ vanishes. Suppose $\omega\in \RR$ satisfies assumptions in \S \ref{eigenvalue}. Let $\mathbf{S}_{\rm{rel}}$ be as in Definition \ref{rels}, $\mathbf{j}^{\pm}$ be as in Definition \ref{jdef}. Then
\begin{equation}
\mathbf{S}_{\rm{rel}}: \mathscr{D}^{\prime}(\mathbb S^1;\CC^d)\rightarrow \mathscr{D}^{\prime}(\mathbb S^1;\CC^d)
\end{equation}
is a Fourier integral operator of order $0$ associated to the canonical transformation
\begin{equation}\begin{split}
    C_{\mathbf{S}_{\rm{rel}}}=\{ & (z,\zeta; y,\eta)\in \bigsqcup_{d}T^*\mathbb S^1\setminus 0\times \bigsqcup_{d}T^*\mathbb S^1\setminus 0:\\ & \mathbf{j}^-(z,\zeta)~~~~\text{and}~~~~\mathbf{j}^+(y,\eta)~~~~\text{lie on the same bicharacteristic of}~~~~P \}.
\end{split}\end{equation}
\end{theo}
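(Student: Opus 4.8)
The plan is to identify $\mathbf{S}_{\rm{rel}}$ with the operator $\Phi$ that propagates the restriction of a microlocal solution to a slice near the repulsive cycle forward along the bicharacteristics of $P$ to a slice near the attractive cycle, and then to recognize $\Phi$ as a finite composition of elementary Fourier integral operators. Concretely, by the identifications \eqref{symbolrestriction} and \eqref{distributionrestriction} of \S\ref{nfams}, for $f^{\pm}\in C^{\infty}(\mathbb S^1;\CC^d)$ and $u^{\pm}\in H_{\omega,0}^{\pm}(f^{\pm})$ the microlocal restriction of $u^{\pm}$ to $\{x_1=\pm1\}$ in the normal form coordinates of Lemma \ref{normal} is, via the coordinate change $\mathbf{j}^{\pm}$, the image of $\mathbf{T}^{\pm}f^{\pm}$ up to a fixed elliptic operator of order $0$ coming from the transport equation and the half-density normalization. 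Using the uniqueness statements of Theorem \ref{theorem1} and the limiting absorption principle to pin down $f^{+}=\mathbf{S}_{\omega}f^{-}$ from the requirement $u^{-}+u^{+}\in\mathcal{Z}(P,\omega)$, this means that $\mathbf{S}_{\rm{rel}}$, conjugated through $\mathbf{T}^{\pm}$ and $\mathbf{j}^{\pm}$, becomes the map $\Phi:\ u^{-}|_{x_1=-1}\mapsto u^{+}|_{x_1=+1}$. It therefore suffices to show that $\Phi$ is a Fourier integral operator of order $0$ whose canonical relation is the graph of the bicharacteristic flow of $P$ between $\Sigma_{\rm{hom}}\cap\{x_1=-1\}$ and $\Sigma_{\rm{hom}}\cap\{x_1=+1\}$; then $\mathbf{S}_{\rm{rel}}$ is an FIO of order $0$ with canonical relation the pullback of this graph by $\mathbf{j}^{\pm}$, which is $C_{\mathbf{S}_{\rm{rel}}}$, and it extends to $\mathscr{D}^{\prime}(\mathbb S^1;\CC^d)$ since FIOs of order $0$ act there and $\mathbf{T}^{\pm}$ preserve $\mathscr{D}^{\prime}$.

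Next I would handle the bulk of the flow. On the complement of small conic neighborhoods of the source and sink cycles, $\Sigma_{\rm{hom}}$ is compact, $P-\omega$ is of real principal type in the sense of \S\ref{ps}, and $\Lambda_{\omega}^{\pm}$ are flowout Lagrangians, so a microlocal solution in $I^0$ is determined modulo $C^{\infty}$ by its symbol on $\Lambda_{\omega}^{\pm}$, which --- the subprincipal symbol vanishing --- is constant along the bicharacteristic flow up to the half-density factor. Covering a neighborhood of the closure of the relevant bicharacteristic segments by finitely many conic charts and reducing $P-\omega$ in each, via an elliptic zeroth order FIO and Egorov's theorem, to a model operator whose null bicharacteristics are coordinate lines, the passage from one noncharacteristic transversal slice to the next becomes the elementary transport operator along those lines, a Fourier integral operator of order $0$ with canonical relation the graph of the flow map between the slices. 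Composing these finitely many elementary operators (tracking orders along the way) shows that the bulk part of $\Phi$ is a zeroth order Fourier integral operator whose canonical relation is the symplectomorphism induced on $\Sigma_{\rm{hom}}$ by the bicharacteristic flow between the two slices.

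For the two ends I would use the normal form of Lemma \ref{normal} together with the explicit model constructions of \S\ref{nfams}: there the microlocal solutions are conormal to the cycle $\{x_1=0\}$, the rescaled Hamiltonian flow is linear, hyperbolic transverse to the cycle with Lyapunov rate $\lambda_j^{\pm}$ on the $j$-th component and rotating along it, and the transport equation forces the symbol on $\Lambda_{\omega}^{\pm}$ to be constant along flow lines. The multiplier $\mathbf{T}^{\pm}$, whose Fourier phase $\theta(k/\lambda_j^{\pm})$ encodes precisely the logarithmic change of variable relating the angle on the cycle to $\log|\xi_1|$, is exactly the map converting the circle datum into the slice restriction of this conormal solution at $x_1=\pm1$; so after conjugation each end contributes to $\Phi$ only a fixed elliptic zeroth order operator whose canonical relation is $\mathbf{j}^{\pm}$ as in Definition \ref{jdef}. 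Gluing these to the bulk computation, and checking that all compositions are transversal (each slice is noncharacteristic and the flow map is a diffeomorphism between slices), yields the FIO statement for $\Phi$, hence for $\mathbf{S}_{\rm{rel}}$. To confirm that $C_{\mathbf{S}_{\rm{rel}}}$ is the graph of a genuine canonical transformation, I would verify that $\mathbf{j}^{\pm}$ pull the symplectic form induced on $\Sigma_{\rm{hom}}$ by the reduction back to the canonical form of $T^{*}\mathbb S^1\setminus 0$ --- this is exactly why the factor $\eta/\lambda_j^{\pm}$ appears in Definition \ref{jdef} --- so that the flow-induced symplectomorphism of the reduced spaces transports to a symplectomorphism $\bigsqcup_{d}T^{*}\mathbb S^1\setminus 0\to\bigsqcup_{d}T^{*}\mathbb S^1\setminus 0$.

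The hard part will be the two cycle ends: there the bicharacteristics spiral in infinitely fast and, since the Lyapunov exponents $\lambda_j^{\pm}$ generally differ from component to component, the approach to the cycles happens at incommensurate rates, so the naive solution operator along the flow is not a Fourier integral operator and the elliptic/Egorov reduction degenerates at the radial set. The crux is to show --- using the normal form of Lemma \ref{normal} and the microlocal radial estimates underlying the limiting absorption principle --- that this entire pathology is captured exactly by the explicit unitary multipliers $\mathbf{T}^{\pm}$, leaving behind an honest zeroth order Fourier integral operator. A secondary but genuine technical point is the bookkeeping of orders and half-densities through the compositions so that $\mathbf{S}_{\rm{rel}}$ lands precisely in order $0$, together with checking that $C_{\mathbf{S}_{\rm{rel}}}$ is globally well-defined, i.e. that each bicharacteristic meeting the source slice meets the sink slice exactly once on the matching component.
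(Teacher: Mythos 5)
Your overall picture --- absorb the spiraling at the cycles into $\mathbf{T}^{\pm}$, propagate through the bulk by real principal type theory, and read the canonical relation off the bicharacteristic flow --- matches the paper in spirit, but as written there are two genuine gaps. First, the identification of $\mathbf{S}_{\rm{rel}}$ with a slice-to-slice propagation map $\Phi$ is asserted, not proved. The paper never works with restrictions of solutions to $\{x_1=\pm1\}$ directly; it obtains a usable operator identity for $(\mathbf{T}^+)^*\mathbf{S}\mathbf{T}^-$ by applying the boundary pairing formula (Proposition \ref{bdryprop}) to $u_1=\mathbf{H}^-\mathbf{T}^-(f)$ and $u_2=\mathbf{Q}^+(g)$, where $\mathbf{Q}^{\pm}=(I-X^{\mp})\mathbf{E}^{\mp}[P,X^{\pm}]\mathbf{R}^{\pm}-X^{\pm}\mathbf{R}^{\pm}$ are built from the explicit reference operators $\mathbf{R}^{\pm}$ of \S\ref{nfams} and the real-principal-type parametrices $\mathbf{E}^{\mp}$ of Lemma \ref{pippara}; this gives \eqref{reldefi}, and wavefront-set arguments (Lemma \ref{smoothing}, Lemma \ref{resolvent}) reduce it to the explicit composition \eqref{srelformula} modulo smoothing operators. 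Without some identity of this kind, your conjugation step, the claim that each end contributes only ``a fixed elliptic zeroth order operator'', and the extension of $\mathbf{S}_{\rm{rel}}$ to $\mathscr{D}^{\prime}$ all remain unestablished (the extension argument as you state it is circular, since it presupposes the FIO conclusion it is meant to support).

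Second, the point you yourself flag as ``the crux'' --- that after conjugation by $\mathbf{T}^{\pm}$ the cycle ends leave behind an honest Fourier integral operator --- is exactly where the work lies, and the proposal does not carry it out. The paper does it by explicit computation in the normal form: Lemma \ref{discretelag} shows that $\sum_k|\alpha(\lambda^{-1}k)|e^{ikz}$ is a Lagrangian distribution of order $3/4$ conormal to $\{z=0\}$ with one-sided frequency support, and Lemma \ref{radialrelation} then shows that $\widehat{X}^{+}\mathbf{R}^+$ (cut off away from $\Lambda^+$) is an FIO of order $1/4$ whose canonical relation is the flowout $\{(x,\xi)\sim\mathbf{j}^+(y,\eta)\}$. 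Two of your concrete claims are off in a way that matters for the bookkeeping: the multiplier $|\alpha(k/\lambda_j^{\pm})|$ grows like $|k|^{1/2}$ on one side and decays exponentially on the other, so the end factors are not elliptic of order $0$; and the canonical relations of the end pieces are flowouts, not graphs, so the compositions in \eqref{srelformula} are not transversal but clean with excess $e=1$, requiring H\"ormander's clean composition theorem, with the order count $-3/2+1/4+1/2=-3/4$ and then $-3/4+1/4+1/2=0$ producing order $0$. Filling these two gaps essentially forces you back to the paper's argument (explicit reference operators, boundary pairing, clean FIO composition).
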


\Remarks
\noindent
1. As one can already see from Definition \ref{jdef}, the microlocal solutions branch in the phase space. This reflects the fact that the bicharacteristics can approach or depart the limit cycles in two different directions. See also Lemma \ref{radialrelation}.

\noindent
2. From the canonical relation of $\mathbf{S}_{\rm{rel}}$, we know that the scattering occurs only between limit cycles that ``communicate to each other'', that is, they are the attractive cycle or the repulsive cycle of the same bicharacteristic.

\subsection{Assumptions on $P$}
\label{assumption}

We assume that $M$ is a compact surface without boundary and $P\in \Psi^0(M)$ is a 0th order pseudodifferential operator with principal symbol  $p\in S^0(T^*M\setminus 0; \RR)$
which is homogeneous of order $0$ and has $0$ as a regular value. We also assume that for some smooth density, $dm(x)$, on $M$, $P$ is self-adjoint.

Let $\overline{T}^*M$ be the fiber-radially compactified cotangent bundle. For details of the construction of the compactification, we refer to \cite[Appendix E.1.3]{res}. We fix some metric $|\cdot|$ on $\overline{T}^*M$. Let $\kappa: \overline{T}^*M\setminus 0 \rightarrow \partial \overline{T}^*M$ be the quotient map for the $\RR^+$ action $(x,\xi)\mapsto (x,t\xi)$, $t>0$. The rescaled Hamiltonian vector field $|\xi|H_p$ commutes with the $\RR^+$ action and $X:=\kappa_*(|\xi|H_p)$ is tangent to $\Sigma:=\kappa(p^{-1}(0))$. Note that $\Sigma$ is an oriented surface since it is defined by the equation $p=0$ in the oriented 3-manifold $\partial\overline{T}^*M$.

We now recall the dynamical assumption made by Colin de Verdi\`ere and Saint-Raymond \cite{2dwave}:
\begin{equation}
\label{nofixedpt}
\text{The flow of $X$ on $\Sigma$ is a Morse-Smale flow with no fixed points.}
\end{equation}
For the reader's convenience we recall the definition of Morse-Smale flows generated by $X$ on a surface $\Sigma$ (see \cite[Definition 5.1.1]{flow}):
\begin{enumerate}
    \item $X$ has a finite number of fixed points all of which are hyperbolic;

    \item $X$ has a finite number of hyperbolic limit cycles;

    \item there are no separatrix connections between saddle fixed points;

    \item every trajectory different from (1) and (2) has unique trajectory (1) or (2) as its $\alpha$, $\omega$-limit sets.
\end{enumerate}

We remark that under the assumption \eqref{nofixedpt}, the numbers of attractive limit cycles and the number of repulsive limit cycles are the same. In fact, the limit cycles divide $\Sigma$ into several connected open subsets with limit cycles as their boundaries. Let $N_1$ be the number of such connected open subsets. In each connected open subset, we pick a trajectory of $X$: $\gamma_{1}, \cdots, \gamma_{N_1}$. By our assumptions, each $\gamma_{j}, 1\leq j\leq N_1$ has a unique attractive limit cycle as its $\omega$-limit set. On the other hand, for each attractive limit cycle $\gamma$, we can find two different trajectories in $\{\gamma_{j_1}\}$, $\gamma_{j_2}$ such that $\gamma$ is the $\omega$-limit set of $\gamma_{j_1}$ and $\gamma_{j_2}$. Therefore if $d$ is the number of attractive limit cycles, then $2d=N_1$. A similar argument shows that if $d^{\prime}$ is the number of repulsive limit cycles, then $2d^{\prime}=N_1$. Hence we have $d=d^{\prime}$.

Let $\Sigma(\omega):=\kappa(p^{-1}(\omega))$. If $\delta>0$ is sufficiently small then stability of Morse-Smale flows (and the stability of non-vanishing of $X$) shows that \eqref{nofixedpt} is satisfied for $\Sigma(\omega)$, $|\omega|\leq 2\delta$. Let $L_{\omega}^{\pm}$ be the attractive ($+$) and repulsive ($-$) cycles for the flow of $X$ on $\Sigma(\omega)$ then $L^+_{\omega}$ is a radial sink and $L^-_{\omega}$ is a radial source for the Hamiltonian flow of $|\xi|\sigma(P-\omega)$ and the conic submanifolds 
\begin{equation}
\label{lagdef}
\Lambda_{\omega}^{\pm}:=\kappa^{-1}(L_{\omega}^{\pm})\subset T^*M\setminus 0
\end{equation}
are Lagrangian (see \cite[Lemma 2.1]{force}). The number of connected components of $\Lambda^{\pm}_{\omega}$ does not change for small $\omega$.

\subsection{Eigenvalues of $P$}
\label{eigenvalue}

It is proved in \cite[Theorem 5.1]{2dwave} and \cite[Lemma 3.2]{force} that $P$ has only embedded eigenvalues with finite multiplicities. In order to simplify the notations, we assume that
\begin{equation}
\label{0notev}
\text{$0$ is not an eigenvalue of $P$}.
\end{equation}
Under this assumption we know
\begin{equation}
\label{finteev}
    |\Spec_{\rm{pp}}(P)\cap [-\delta,\delta]|<\infty.
\end{equation}
with $\delta>0$ as in \S \ref{assumption}.
We also know that there exists $0<\delta_0<\delta$, such that
\begin{equation}
\label{0notevdelta}
\Spec_{\rm{pp}}(P)\cap [-\delta_0,\delta_0]=\emptyset.
\end{equation}
From now on we always assume \eqref{0notev} and \eqref{0notevdelta} and that $|\omega|\leq \delta$.

\subsection{Examples}
Let $M=\mathbb{T}^2:=\mathbb{S}^1_{x_1}\times \mathbb{S}^1_{x_2}$ be the torus, where $\mathbb{S}^1=\RR/ 2\pi\mathbb{Z}$.

1. Consider
\begin{equation}
    P:=p^W(x,\xi), \quad p(x,\xi):=\langle \xi \rangle^{-1}\xi_2-2\cos{x_1}.
\end{equation}
For this operator, $\kappa(p^{-1}(0))$ is a union of two tori which do not cover $\mathbb{T}^2$, and there are two attractive cycles $\kappa(\Lambda^+_0)$ where $\Lambda^+_0=\{(\pm \pi/2, x_2; \xi_1,0): x_2\in \mathbb{S}^1,\pm\xi_1<0\}$.

We can also consider
\begin{equation}
    P:=p^W(x,\xi), \quad p(x,\xi):=\langle \xi \rangle^{-1}\xi_2-\frac12\cos{x_1}.
\end{equation}
In this case, $\kappa(p^{-1}(0))$ is a union of two tori which cover $\mathbb{T}^2$. For illustrative figures of these two operators, see \cite[\S 1.3]{force}.

2. An example of an embedded eigenvalue was constructed by Zhongkai Tao \cite[Example 2]{circle}. Tao showed that for $M=\mathbb{T}^2$, if
\begin{equation}
    P:=p^W(x,\xi), \quad p(x,\xi):=\langle \xi \rangle^{-1}\xi_2-\alpha(1-\chi_k(\xi_1)\psi(\xi_2))\cos{x_1}
\end{equation}
with $\chi_k(k\pm1)=1$, $\psi(\ell)=\delta_{\ell 0}$, $\chi_{k},\psi\in C_c^{\infty}(\RR)$, then
\begin{equation}
    P(e^{i x_1 k})=0, \quad  \text{and hence} \quad 0\in \Spec_{\rm{pp}}(P).
\end{equation}

\subsection{Organization of this paper}

Throughout \S \ref{lap} to \S \ref{msotsm}, we assume that $\omega\in \RR$ is not an embedded eigenvalue of $P$. We show how to handle the case where $\omega$ is an eigenvalue in \S \ref{ev}.

In \S \ref{pre}, we review some useful conceptions and facts on semiclassical analysis and Lagrangian distributions. In \S \ref{lap}, we prove a version of limiting absorption principle for the resolvent of $P$. In \S \ref{te}, we discuss the solution to the transport equation. In \S \ref{formalsolution}, we solve \eqref{eq: equation} up to smooth functions. The maps $H^{\pm}_{\omega,0}$ are constructed in Lemma \ref{GH}. In \S \ref{boundary}, we prove a boundary pairing formula which is crucial for us to define the scattering matrix $\mathbf{S}_{\omega}$. This formula also shows the unitarity of our scattering matrix. In \S \ref{sm}, we construct the Poisson operator of $P-\omega$ and define $\mathbf{S}_{\omega}$. We also prove Theorem \ref{theorem1} in this section. In \S \ref{nfams}, we compute explicit formulas for the microlocal solutions using microlocal normal forms of $P$. In \S \ref{ps}, we study the propagation of singularities of the microlocal solution. In \S \ref{msotsm}, we prove a formula for the conjugated scattering matrix $\mathbf{S}_{\rm{rel}}$ up to smoothing operators. Proof of Theorem \ref{theorem2} is presented in this section. In \S \ref{ev}, the results are generalized to embedded eigenvalues.

\medskip\noindent\textbf{Acknowledgements.}
I would like to thank Maciej Zworski for suggesting this problem and for helpful advice. I would like to thank Andr\'as Vasy for explaining the idea of his paper on long-range metric scattering and his joint work with Andrew Hassell and Richard Melrose on $0$th order symbolic potential scattering. I would also like to thank Long Jin and Semyon Dyatlov for helpful discussions. Partial support by the National Science Foundation grant DMS-1500852 is also gratefully acknowledged.

\section{preliminaries}
\label{pre}

In this section we review some important ingredients of this paper: semiclasscal analysis and Lagrangian distributions.

\subsection{Semiclassical analysis}

Here we review the notion of wavefront sets and prove some facts that are useful for the analysis in later sections. A complete introduction to semiclassical analysis can be found in \cite{semi} and \cite[Appendix E]{res}.

We first recall the definition of wavefront sets.
\begin{defi}
\label{relativewf}
For $s\in \RR$, we define the semiclassical relative wavefront set $\WF_h^s(u)$ for a family of $h$-tempered (see \cite[Definition E.35]{res}) distribution $u=u(h)$ in the following way: for $(x_0,\xi_0)\in \overline{T}^*M$, $(x_0,\xi_0)\notin \WF_h^s(u)$ if and only if there exists $a\in C_c^{\infty}(T^*M)$ such that $a(x_0,\xi_0)\neq 0$ and $\|\Op_{h}(a)u\|_{L^2}=O(h^{s+})$. If $u$ does not depend on $h$, we define the wavefront set of $u$ by
\begin{equation}
\label{nonsemiwf}
\WF^s(u):=\WF_h^s(u)\cap(T^*M\setminus 0).
\end{equation}
We also define
\begin{equation}
    \WF_h(u):=\cup_{s\in \RR}\WF_h^s(u)
\end{equation}
when $u=u(h)$ is $h$-tempered and 
\begin{equation}
    \WF(u):=\cup_{s\in \RR}\WF^s(u)
\end{equation}
when $u$ does not depend on $h$.
\end{defi}

Since we use the slightly non-standard semiclassical definition of $\WF^s_h$ we provide the proof of the following lemma:
\begin{lemm}
\label{emptywf}
If $u\in \mathscr{D}^{\prime}(M)$, then
$\WF^s(u)=\emptyset$ if and only if $u\in H^{s+}(M)$.
Moreover, $\WF(u)=\emptyset$ if and only if $u\in C^{\infty}(M)$.
\end{lemm}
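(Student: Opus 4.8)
The plan is to establish the Sobolev statement — $\WF^s(u)=\emptyset$ if and only if $u\in H^{s+}(M)$ — for each fixed $s\in\RR$; the ``moreover'' assertion then follows at once, since $\WF(u)=\bigcup_{s\in\RR}\WF^s(u)=\emptyset$ is equivalent to $u\in\bigcap_{s\in\RR}H^{s+}(M)=\bigcap_{s\in\RR}H^s(M)=C^\infty(M)$ by the Sobolev embedding theorem on the compact manifold $M$. Throughout, a fixed $u\in\mathscr D'(M)$ is regarded as the constant $h$-family $u(h)=u$, which is $h$-tempered since $u\in H^{-N}(M)$ for some $N$.

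For the implication $u\in H^{s+}(M)\Rightarrow\WF^s(u)=\emptyset$, the one ingredient needed is the elementary mapping estimate: if $a\in C_c^\infty(T^*M)$ has $\supp a$ disjoint from the zero section and $u\in H^t(M)$, then $\|\Op_h(a)u\|_{L^2}\le C_a\,h^{t}\|u\|_{H^t}$. In a coordinate chart this follows by writing $\Op_h(a)=\Op_h(a)\Op_h(\chi)+\Op_h(a)\bigl(I-\Op_h(\chi)\bigr)$ with $\chi=\chi(\xi)$ equal to $1$ on the ($\xi$-uniform, compact) set $\supp_\xi a$ and supported in an annulus $\{c\le|\xi|\le C\}$: the second term lies in $h^{\infty}\Psi_h^{-\infty}$ because the two symbols have disjoint supports, so it contributes $O(h^\infty)\|u\|_{H^{-N}}$; for the first term, $\widehat{\Op_h(\chi)u}(\xi)=\chi(h\xi)\hat u(\xi)$ is supported in $\{c/h\le|\xi|\le C/h\}$, where $\langle\xi\rangle^{-2t}\le C\,h^{2t}$ regardless of the sign of $t$, whence $\|\Op_h(\chi)u\|_{L^2}\le C\,h^{t}\|u\|_{H^t}$ and so $\|\Op_h(a)u\|_{L^2}\le C\,h^{t}\|u\|_{H^t}$; the passage to $M$ is by a partition of unity, the off-chart terms again being $O(h^\infty)$. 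Granting this, if $u\in H^{s+\epsilon}(M)$ with $\epsilon>0$, then for any $(x_0,\xi_0)\in T^*M\setminus 0$ one picks $a\in C_c^\infty(T^*M)$ with $a(x_0,\xi_0)\neq 0$ and $\supp a$ in a small neighbourhood of $(x_0,\xi_0)$, necessarily disjoint from the zero section since $\xi_0\neq 0$; then $\|\Op_h(a)u\|_{L^2}=O(h^{s+\epsilon})=O(h^{s+})$, so $(x_0,\xi_0)\notin\WF^s_h(u)$. As $\WF^s(u)=\WF^s_h(u)\cap(T^*M\setminus 0)$, this gives $\WF^s(u)=\emptyset$.

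For the converse, suppose $\WF^s(u)=\emptyset$. Fix $\psi_0\in C_c^\infty(\RR^n)$ supported in the annulus $\{1/2\le|\xi|\le 2\}$, and let $\Op^h_M(\psi_0)$ be a fixed quantization on $M$ of $\psi_0$, microsupported over $M\times\{1/2\le|\xi|\le 2\}$. Standard Littlewood–Paley theory in its semiclassical form (changing the quantization affects the relevant norms only below the summability threshold, and the low frequencies are smooth since cutting off $|\xi|$ is a $\Psi^{-\infty}$ operation) gives that $u\in H^t(M)$ as soon as $\sum_{j\ge 0}2^{2jt}\|\Op^{2^{-j}}_M(\psi_0)u\|_{L^2}^2<\infty$. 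Now, by the hypothesis together with compactness of $M\times\{1/2\le|\xi|\le 2\}\subset T^*M\setminus 0$, there exist finitely many $a_1,\dots,a_N\in C_c^\infty(T^*M)$ and $\epsilon_1,\dots,\epsilon_N>0$ with $\|\Op_h(a_i)u\|_{L^2}=O(h^{s+\epsilon_i})$ and with the open sets $\{a_i\neq 0\}$ covering $M\times\{1/2\le|\xi|\le 2\}$. Choosing a subordinate partition of unity $\{\phi_i\}$ on this annular region, $\psi_0=\sum_i\phi_i\psi_0$ with $\supp(\phi_i\psi_0)\subset\{a_i\neq 0\}$, and the semiclassical elliptic parametrix gives $\Op^h_M(\phi_i\psi_0)=B_i\,\Op_h(a_i)+R_i$ with $B_i\in\Psi_h^0$ bounded on $L^2$ uniformly in $h$ and $R_i\in h^\infty\Psi_h^{-\infty}$; hence $\|\Op^h_M(\phi_i\psi_0)u\|_{L^2}\le C\|\Op_h(a_i)u\|_{L^2}+O(h^\infty)\|u\|_{H^{-N}}=O(h^{s+\epsilon_i})$. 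Setting $\epsilon:=\min_i\epsilon_i>0$ and summing the finitely many terms, $\|\Op^h_M(\psi_0)u\|_{L^2}=O(h^{s+\epsilon})$; with $h=2^{-j}$ this reads $\|\Op^{2^{-j}}_M(\psi_0)u\|_{L^2}=O(2^{-j(s+\epsilon)})$, so $\sum_{j\ge 0}2^{2j(s+\epsilon/2)}\|\Op^{2^{-j}}_M(\psi_0)u\|_{L^2}^2<\infty$ and therefore $u\in H^{s+\epsilon/2}(M)\subset H^{s+}(M)$.

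The substance of the argument is thus confined to routine but careful bookkeeping: tracking the $O(h^\infty)$ errors from coordinate patching, from the choice of quantization on $M$, and from the elliptic parametrices, and extracting a single positive $\epsilon$ from the finite cover. I expect this last point — obtaining a uniform gain $\epsilon>0$ rather than a point-dependent one — together with the semiclassical Littlewood–Paley reduction on the manifold to be the only places needing real care; everything else is controlled by the standard semiclassical calculus, using crucially that $u\in\mathscr D'(M)$ is automatically $O(1)$ in a fixed $H^{-N}(M)$.
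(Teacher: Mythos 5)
Your proposal is correct and takes essentially the same approach as the paper: the easy direction is the direct estimate exploiting that $\Op_h(a)$ localizes to frequencies $|\xi|\sim h^{-1}$, and the hard direction combines compactness of the unit-frequency annulus (extracting a single gain $\epsilon>0$ from a finite cover) with a dyadic summation, which is precisely the paper's estimate built from the symbol $a_0(x,\xi)+\sum_j a(x,h_0^j\xi)$. The only cosmetic difference is that you route the ellipticity through a fixed Littlewood--Paley block $\psi_0$ and an elliptic parametrix rather than summing the localizers into one symbol elliptic on the annulus; just make sure $\psi_0$ is chosen elliptic on a closed sub-annulus (not merely supported in $\{1/2\le|\xi|\le 2\}$) so that its dyadic dilates cover all large frequencies.
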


\begin{proof}
Suppose $\WF^s(u)=\emptyset$. Then by the definition, for any $(x_0,\xi_0)\in T^*M\setminus 0$, there exists $a_{(x_0,\xi_0)}\in C_c^{\infty}(T^*M\setminus 0 )$ such that $a_{(x_0,\xi_0)}(x,\xi)\neq 0$ in some open neighborhood $B_{(x_0,\xi_0)}\subset T^*M\setminus 0$ of $(x_0,\xi_0)$. Suppose $\{B_{(x_k,\xi_k)}\}_{k=1}^m$ is an open covering of $\{(x,\xi)\in T^*M\setminus 0: 1\leq |\xi|\leq 2\}$. Let $a(x,\xi)=\sum_{k=1}^m a_{(x_k,\xi_k)}$, then $a\in C^{\infty}_c(T^*M\setminus 0)$, $a(x,\xi)\neq 0$ when $1\leq |\xi|\leq 2$ and there exist $\delta>0, \epsilon>0, C>0$ such that for any $0<h<\epsilon$, $\|a(x,hD)u\|_{L^2}\leq Ch^{s+\delta}$. Choose $h_0$ small enough and $a_0\in C_c^{\infty}(T^*M)$ such that $C_1\leq a_0(x,\xi)+\sum_{j=0}^{\infty}a(x,h_0^{j}\xi)\leq C_2$ for some constants $C_1, C_2>0$ for any $(x,\xi)\in T^*M$. Then
\begin{equation}\begin{split}
\|u\|_{H^{s+\frac{\delta}{2}}}
\leq & C(\|a_0(x,D)u\|_{L^2}+\sum_{j=0}^{\infty}h_0^{-(s+\frac{\delta}{2})j}\|a(x,h_0^jD)u\|_{L^2}) \\
\leq & C(1+\sum_{j=0}^{\infty}h_0^{-(s+\frac{\delta}{2})j+(s+\delta)j})=C(1+\sum_{j=0}^{\infty}(h_0^{\frac{\delta}{2}})^j) < \infty.
\end{split}\end{equation}
This implies $u\in H^{s+\frac{\delta}{2}}(M)$.

On the other hand, suppose $u\in H^{s+\delta}$ for some $\delta>0$. Then for any $(x_0,\xi_0)\in T^*M\setminus 0$, let $a\in C_c^{\infty}(T^*M)$ such that $a(x_0,\xi_0)\neq 0$ and $a(x,\xi)=0$ when $|\xi_0|/2\leq |\xi|\leq 2 |\xi_0|$. Then for any $h>0$,
\begin{equation}
h^{-(s+\delta)}\|a(x,hD)\|_{L^2} \leq C\|\langle D \rangle^{s+\delta} a(x,hD)u \|_{L^2} \leq C\|u\|_{H^{s+\delta}}.
\end{equation}
Hence $\|a(x,hD)u\|_{L^2}\leq h^{s+\delta}\|u\|_{H^{s+\delta}}$.
\end{proof}

In the proof of Lemma \ref{regularitylift} and Proposition \ref{bdryprop}, we will take advantage of semiclassical analysis to analyse the operator $P$. Even though $P$ itself is not a semiclassical pseudodifferential operator, we can make it semiclassical by composing it with some microlocal cutoff operator. More precisely, we have the following lemma:
\begin{lemm}
\label{semi}
Suppose $\chi \in C^{\infty}(\overline{T}^*M ;[0,1])$ such that $\chi=0$ when $|\xi|\leq R_0$, $\chi=1$ when $|\xi|\geq 2R_0$ for some $R_0\gg 1$. Then for $h>0$, the operator $[P, \chi(x,hD)]$ is a semiclassical pseudodifferential operator. Moreover,
\begin{enumerate}
    \item $\WF_h(h^{-1}[P,\chi(x,hD)])$ is a compact subset of $T^*M\setminus 0$, that is, $[P,\chi(x,hD)]\in h\Psi^{\comp}_{h}(M)$;
    \item $\sigma_h(h^{-1}[P,\chi(x,hD)])=-i\{p, \chi\}$.
\end{enumerate}
\end{lemm}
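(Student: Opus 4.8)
The plan is to reduce everything to a symbol-calculus computation by working in local coordinates and using that $\chi(x,hD)$ is (microlocally, away from the zero section) a semiclassical quantization of the symbol $\chi$. First I would note that since $\chi = 1$ for $|\xi| \geq 2R_0$, the symbol $\chi$ is constant (hence its derivatives vanish) outside a compact set in the fibers; thus $\chi \in S^0(\overline T^*M)$ and in fact $\chi - 1 \in S^{-\infty}$ away from a conic neighborhood of the zero section, while near the zero section $\chi$ is simply $0$. The operator $P$ is a classical (non-semiclassical) $\Psi^0$ operator with symbol $p \in S^0$. The key observation is that the commutator $[P,\chi(x,hD)]$ — which a priori is just a bounded operator — is actually an honest semiclassical pseudodifferential operator because of a gain of $h$: writing $P = \Op(p)$ and $\chi(x,hD) = \Op_h(\chi)$, the leading terms in the composition $P\chi(x,hD)$ and $\chi(x,hD)P$ both have symbol $p(x,\xi)\chi(x,\xi)$ and cancel, so the difference begins at one order lower, which in the semiclassical scaling is a factor of $h$.

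The main step is the composition formula. I would write, in local coordinates, the full symbol of $P\circ\chi(x,hD)$ and of $\chi(x,hD)\circ P$ using the standard asymptotic expansion
\begin{equation*}
\sigma(A\circ B) \sim \sum_{\alpha} \frac{1}{\alpha!}\,\partial_\xi^\alpha \sigma(A)\, D_x^\alpha \sigma(B),
\end{equation*}
being careful about which variable the $h$ is attached to. Since $\chi$ is independent of $h$ as a symbol but is quantized semiclassically, each $\partial_\xi$ falling on $\chi(x,\xi)$ in $\Op_h(\chi) = \chi(x,hD)$ produces a factor of $h$. The $\alpha=0$ terms are both $p\chi$ and cancel in the commutator. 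The $|\alpha|=1$ terms give $h\big(\sum_j \partial_{\xi_j}p\, D_{x_j}\chi - \partial_{\xi_j}\chi\, D_{x_j}p\big) = \tfrac{h}{i}\{p,\chi\} \cdot(-1)$ up to the sign conventions — more precisely the principal part of $h^{-1}[P,\chi(x,hD)]$ is $-i\{p,\chi\}$, giving (2). The higher-order terms $|\alpha|\geq 2$ contribute $O(h^2)$ and lie in $h^2\Psi_h^{\comp}$. To make sense of this as an element of $\Psi_h(M)$ globally one invokes the usual coordinate-invariance of the principal symbol. For (1): because $\nabla_x\chi$ and $\nabla_\xi\chi$ are both supported in the compact shell $R_0 \leq |\xi| \leq 2R_0$ (this is where $\chi$ is non-constant), the symbol $\{p,\chi\}$ — and every term in the expansion of $h^{-1}[P,\chi(x,hD)]$ — is supported in $\{R_0 \leq |\xi| \leq 2R_0\}$, which is a compact subset of $T^*M\setminus 0$; hence $\WF_h(h^{-1}[P,\chi(x,hD)])$ is compact and contained in $T^*M\setminus 0$, i.e. $[P,\chi(x,hD)] \in h\Psi_h^{\comp}(M)$.

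The one genuine subtlety — the part I would be most careful about — is the interplay between the classical (non-semiclassical) calculus for $P$ and the semiclassical calculus for $\chi(x,hD)$: a priori these live in different calculi, and one must justify that the composition lands in the semiclassical calculus with a uniform symbol expansion in $h$. The clean way to handle this is to observe that on the support of $d\chi$ we have $|\xi|\asymp R_0 \asymp 1$ (after the fixed choice of $R_0$), so on that region the classical symbol $p$ and all its derivatives are bounded, and $P$ composed on either side with a cutoff to $\{|\xi|\lesssim R_0\}$ behaves like a semiclassical operator with symbol $p$ frozen there; the contributions from $|\xi|\gg R_0$ are harmless because $\chi$ is constant there and $[P,\const]=0$ modulo $\Psi^{-\infty}$. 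Once this localization is in place, the remaining content is the routine symbolic computation above, and no further obstacle arises.
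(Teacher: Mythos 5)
Your plan is in the same spirit as the paper's proof, and its outline is correct; but the step you describe as ``routine symbolic computation'' after localization is exactly where all the content of the lemma lies, and as written your plan leaves it unproved. There is no off-the-shelf composition theorem for the product of the classical operator $P$ and the semiclassical operator $\chi(x,hD)$: writing $\sigma(A\circ B)\sim\sum_\alpha\frac1{\alpha!}\partial_\xi^\alpha\sigma(A)D_x^\alpha\sigma(B)$ across the two calculi, with remainders that are uniform in $h$ and microlocally negligible off the shell, is precisely what must be established. The paper does this by hand: it writes $P=\Op_h(\underline p_h)$ with the rescaled full symbol $\underline p_h(x,\xi)=\underline p(x,\xi/h)$, expresses $\underline p_h\#\chi$ as an explicit oscillatory integral, inserts frequency cutoffs by nonstationary phase at negligible cost, and uses $\supp\chi\subset\{|\xi|\geq R_0\}$ to kill the region $|\xi|\lesssim R_0$ where $\underline p_h$ fails to have uniform bounds; on the surviving region $|\xi|\geq R_0/4$ one has $|\partial_\xi^\alpha\underline p_h|\leq C h^{-|\alpha|}\langle\xi/h\rangle^{-|\alpha|}\leq C'$, so the composition is a genuine element of $S^0_h$, and the principal symbol is then read off by stationary phase. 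Your localization remark (on $\supp d\chi$ one has $|\xi|\asymp R_0$, so there $P$ behaves like a semiclassical operator) is the same observation, so the two arguments coincide at heart.

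Two of your supporting claims, however, need this oscillatory-integral (or a disjoint-support) argument rather than being free. First, near the zero section the vanishing of the symbol $\chi$ does not by itself confine $\WF_h$ of the commutator: the terms of the expansion are supported in the shell $\{R_0\leq|\xi|\leq2R_0\}$, but the remainder is not, and its $O(h^\infty)$ smallness at frequencies $\ll R_0/h$ is exactly the nonstationary-phase estimate the paper performs. Second, ``contributions from $|\xi|\gg R_0$ are harmless because $\chi$ is constant there and $[P,\const]=0$'' is too quick, since $\chi$ is not globally constant; this region again requires the same estimate (or writing $\chi=1-(1-\chi)$ and handling the compactly supported piece). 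Finally, a bookkeeping point: in the term $\partial_\xi p\,D_x\chi$ no factor of $h$ is produced by differentiating $\chi$; the gain of $h$ there comes from $\partial_\xi p=O(\langle\xi\rangle^{-1})$ at the frequencies $|\xi|\sim R_0/h$ where $D_x\chi(x,h\xi)$ is supported (equivalently, from the degree-$0$ homogeneity of $p$). With that accounting your conclusion $\sigma_h\bigl(h^{-1}[P,\chi(x,hD)]\bigr)=-i\{p,\chi\}$ is correct.
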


\begin{proof}
By taking local coordinates we can replace $M$ by $\RR^2$. Suppose
\begin{equation}
P=\Op_h(\underline{p}), \quad \underline{p}\in S^0(T^*\RR^2), \quad \underline{p}-p\in S^{-1}(T^*\RR^2).
\end{equation}
Put $\underline{p}_h(x,\xi):=\underline{p}(x,\xi/h)$.
Then we only need to show that
\begin{equation}
    \underline{p}_h\# \chi, \chi\#\underline{p}_h \in S^{0}_h(T^*\RR^2).
\end{equation}
Here the symbol class $S^{k}$ and semiclassical symbol class $S^{k}_h$ are defined in \cite[Definition E.2]{res} and \cite[Definition E.3]{res}.

By \cite[Theorem 4.11]{semi} we have
\begin{equation}\begin{split}
  \underline{p}_h\# \chi(x,\xi)
=  \frac{1}{(\pi h)^{4}}\iint e^{-\frac{2i}{h}(z\cdot\eta-y\cdot\zeta)}
\underline{p}_h(x+y,\xi+\eta)\chi(x+z,\xi+\zeta)dydzd\eta d\zeta.
\end{split}\end{equation}
Let $\rho_1 \in C^{\infty}_c(\RR)$ such that $\rho_1=1$ on $[0,R_0/16]$ and $\rho_0=0$ on $[R_0/8,\infty)$. By integrating by parts with respect to $d\eta$ and $d\zeta$ and then use the fact that
\begin{equation}
\rho_1(|\eta|)\rho_1(|\zeta|)\rho_1(|\xi|/4)\chi(x+z,\xi+\zeta)=0,
\end{equation}
we know
\begin{equation}\begin{split}
\label{sharp}
  \underline{p}_h\# \chi(x,\xi)
=  \frac{1}{(\pi h)^{4}}\iint e^{-\frac{2i}{h}(z\cdot\eta-y\cdot\zeta)} c(\underline{p}_h, \chi)dydzd\eta d\zeta.
\end{split}\end{equation}
with
\begin{equation}
c(\underline{p}_h, \chi)=\rho_1(|\eta|)\rho_1(|\zeta|)\big(1-\rho_1(|\xi|/4)\big) \underline{p}_h(x+y,\xi+\eta)\chi(x+z,\xi+\zeta).
\end{equation}
On the $\supp{c_h}$, we have $|\xi+\eta|\geq  |\xi|/2$, $|\xi+\zeta|\geq |\xi|/2$, $|\xi|\geq R_0/4$, thus
\begin{equation}
|\partial_x^{\beta}\partial_{\xi}^{\alpha}c(\underline{p}_h,\chi)|\leq C_{\alpha}h^{-|\alpha|}\big\langle\tfrac{\xi}{h}\big\rangle^{-\alpha}\leq C^{\prime}_{\alpha}.
\end{equation}
When $p$ has a polyhomogeneous asymptotic expansion as in the \cite[Definition E.2]{res}, one can check as above that $\underline{p}_h\#\chi$ has asymptotic expansion as in \cite[Definition E.3]{res}.
Thus we find $P\chi(x,hD)$ is a semiclassical pseudo-differential operator and $[P,\chi(x,hD)]$ is a semiclassical pseudo-differential operator as well.

Note that when $|\xi|\gg 1$, we have
\begin{equation}
c(\underline{p}_h,\chi)-c(\chi,\underline{p}_h)=0,
\end{equation}
hence $\WF_{h}([P,\chi(x,hD)])$ is a compact subset of $T^*M$.

The principal symbol of $[P, \chi(x,hD)]$ can be computed by applying the method of stationary phase to \eqref{sharp}.
\end{proof}


\subsection{Lagrangian distributions}
\label{lagrangian}
Suppose $M$ is a smooth manifold of dimension $n$.
Let $\Lambda\subset T^*M\setminus 0$ be a closed conic Lagrangian manifold. There exist open conic sets $\{\mathcal{U}\}$ which cover $\Lambda$ and in some local coordinates in $x$,
\begin{equation}
\label{localcoor}
\Lambda\cap\mathcal{U}=\{(x,\xi):x=\frac{\partial F}{\partial\xi}, \xi\in \Gamma_0\}.
\end{equation}
Here $F=F(\xi)$ is homogenous of order $1$ and $\Gamma_0$ is an open conic set in $\RR^n\setminus 0$. For $s\in \RR$, we define the space $I^s(M,\Lambda)$ to be the space of all $u\in \mathscr{D}^{\prime}(M)$ such that
\begin{enumerate}
    \item $\WF(u)\subset \Lambda$;
    \item If $(x_0,\xi_0)\in \Lambda\cap\mathcal{U}$, then there exists $a\in S^{s-\frac{n}{4}}(T^*M)$ with support in a cone $\Gamma_0 \subset \Lambda\cap\mathcal{U}$, such that near $(x_0,\xi_0)$,
        \begin{equation}
        \label{locallag}
        u(x)=\int_{\Gamma_0} e^{i(\langle x,\xi \rangle-F(\xi))}a(x,\xi)d\xi +r(x)
        \end{equation}
        with $\WF(r)\cap\Gamma_0=\emptyset$. 
\end{enumerate}
The principal symbol of $u$ is defined as a section of $S^{s+n/4}(\Lambda;\mathcal{M}_{\Lambda}\otimes \Omega^{\frac{1}{2}}_{\Lambda})
/S^{s-n/4}(\Lambda;\mathcal{M}_{\Lambda}\otimes\Omega^{\frac{1}{2}}_{\Lambda})$, here $\mathcal{M}_{\Lambda}$ is the Maslov bundle and $\Omega^{\frac{1}{2}}_{\Lambda}$ is the half-density bundle on $\Lambda$.

\Remark
In our case, thanks to the microlocal normal form, the Maslov bundle is a trivial bundle. In fact, suppose $P\in \Psi^0(M)$ satisfies conditions in \S \ref{assumption}. Let $\Lambda_{\omega}$ be the Lagrangian submanifold of $T^*M\setminus 0$ defined by \eqref{lagdef}. Without loss of generality, we assume that $\omega=0$ and put $\Lambda^+:=\Lambda^+_{\omega}$. We can also assume $\Lambda^+$ has only one connected component. The same argument as in \cite[Lemma 6.2, Lemma 6.4]{2dwave} shows that there exists a conic neighborhood $U^+$ of $\Lambda^+$, a conic neighborhood $U^+_0\in T^*(\RR_{x_1}\times \mathbb S_{x_2}^1)\setminus 0$ of $\Lambda^+_0:=\{(x,\xi)\in T^*(\RR\times \mathbb S^1)\setminus 0: x_1=0, \xi_2=0, \xi_1>0\}$ and a homogeneous canonical transformation $\mathcal{H}: U\to U_0$ such that $\mathcal{H}(\Lambda^+)=\Lambda_0^+$. Note that $\Lambda_0^+$ is a conormal bundle with a global generating function $\varphi_0(x,\xi)=x_1\xi_1$, $\xi_1>0$. Therefore the Maslov $\mathcal{M}_{\Lambda_0^+}$ is trivial. Now we only need to show that $\varphi(y,\eta):=\mathcal{H}^*\varphi_0(y,\eta)=x_1(y,\eta)\xi_1(y,\eta)$ is a global generating function of $\Lambda^+$, that is, if we put
\begin{equation}
    \Lambda_{\varphi}:=\{ (y,\eta dy): \eta dy=d_y\varphi, d_{\eta}\varphi=0 \},
\end{equation}
then $\Lambda_{\varphi}=\Lambda^+$. In fact, since $x_1$, $\xi_1$ are homogeneous of order $0$, $1$ respectively, we have
\begin{equation}
    0=\eta d_{\eta}\varphi=(\eta d_{\eta}x_1)\xi_1+x_1(\eta d_{\eta}\xi_1)=x_1\xi_1\Rightarrow x_1=0.
\end{equation}
Therefore
\begin{equation}
    \Lambda_{\varphi}=\{(y,\eta dy): \eta dy= \xi_1 d_y x_1, d_{\eta}x_1=0,x_1=0\}.
\end{equation}
Note that 
\begin{equation}
    \eta dy=\xi dx=\xi_1 d_y x_1+\xi_1 d_{\eta}x_1+\xi_2d x_2.
\end{equation}
Hence $\eta dy=\xi_1 d_y x_1$ and $d_\eta x_1=0$ if and only if $\xi_2 dx_2=0$, that is, $\xi_2=0$. Thus we find $\Lambda_{\varphi}=\Lambda^+$.

In the local coordinates satisfying \eqref{localcoor}, the principal symbol of $u$ is
\begin{equation}
\label{principalsymbol}
\sigma(u|dx|^{\frac12})=(2\pi)^{-1}e^{\frac{\pi i}{4}\sgn\varphi^{\prime\prime}}a(x,\xi)|d\xi|^{\frac{1}{2}}
\end{equation}
where $\varphi(x,\xi)=\langle x,\xi \rangle-F(\xi)$.

Let $P\in \Psi^\ell(M;\Omega_M^{\frac{1}{2}})$ satisfies $\sigma(P)|_{\Lambda}=0$ and $u\in I^s(M,\Lambda;\Omega_M^{\frac12})$ then
\begin{equation}
\label{transport}
Pu\in I^{s+\ell-1}(M,\Lambda; \Omega_M^{\frac12}), \quad \sigma(Pu)=(\tfrac{1}{i}\mathcal{L}_{H_p}+c)\sigma(u)
\end{equation}
where $\mathcal{L}_{H_p}$ is the Lie derivative on the line bundle $\mathcal{M}_{\Lambda}\otimes \Omega^{\frac12}_{\Lambda}$ along $H_p$ and $c$ is the subprincipal symbol of $P$. For the definition of subprincipal symbol and proof of \eqref{transport}, see \cite[Proposition 5.2.1]{dh} and \cite[Theorem 5.3.1]{dh}.

\section{limiting absorption principle}
\label{lap}

A version of the limiting absorption principle for the resolvent of $P$ is proved in \cite[Theorem 5.1]{2dwave} using Mourre estimates and in \cite[Lemma 3.3]{force} using radial estimates. Here we prove the full result as in \cite[Theorem 5.1]{2dwave} following the strategy in \cite{force}. 


We now state the limiting absorption principle.
\begin{prop}
\label{limitabsorp}
Suppose $P$ satisfies conditions in \S \ref{assumption} and $\Spec_{\rm{pp}}(P)\cap[-\delta_0,\delta_0]=\emptyset$. Then for any $|\omega|\leq \delta_0$, $f\in H^{\frac{1}{2}+}(M)$, the limit
\begin{equation}
(P-\omega-i\epsilon)^{-1}f\xrightarrow{H^{-\frac{1}{2}-}}(P-\omega-i0)^{-1}f, \quad \epsilon\rightarrow 0^+
\end{equation}
exists. This limit is the unique solution to the equation
\begin{equation}
(P-\omega)u=f, \quad \WF^{-\frac{1}{2}}(u)\subset \Lambda^+,
\end{equation}
and the map $\omega\mapsto (P-\omega-i0)^{-1}f\in H^{-\frac{1}{2}-}(M)$ is continuous for $\omega\in [-\delta_0,\delta_0]$.
\end{prop}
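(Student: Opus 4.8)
The plan is to establish the limiting absorption principle by the now-standard combination of microlocal radial estimates at $\Lambda^{\pm}_\omega$ and real-principal-type propagation, exactly in the spirit of \cite{force}, but pushing the argument to the threshold regularity $H^{\pm 1/2}$. First I would recall that $P-\omega-i\epsilon$ for $\epsilon>0$ is invertible on $L^2(M)$ (since $P$ is self-adjoint with real spectrum), so $u_\epsilon:=(P-\omega-i\epsilon)^{-1}f$ is well-defined, and the task is to obtain a uniform bound $\|u_\epsilon\|_{H^{-1/2-}}\le C\|f\|_{H^{1/2+}}$ together with convergence as $\epsilon\to 0^+$.

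The key estimates are the two microlocal radial estimates for $P-\omega-i0$: near the radial sink $\Lambda^+_\omega$ one has, for $u$ with $\WF^{s}(u)$ controlled and $s>-1/2$ (the threshold coming from the subprincipal symbol and the half-density factor, i.e.\ the $1/2$ in $H^{-1/2-}$), an estimate of the form
\begin{equation}
\|Bu\|_{H^s}\le C\|B_1(P-\omega-i0)u\|_{H^{s-1}}+C\|B_2 u\|_{H^{s}}+C\|u\|_{H^{-N}},
\end{equation}
where $B$ is elliptic near $\Lambda^+_\omega$, $B_2$ is supported away from $\Lambda^+_\omega$, and $B_1$ is elliptic on $\WF(B)$; near the radial source $\Lambda^-_\omega$ the analogous estimate holds only in the \emph{opposite} (below-threshold) regime $s<-1/2$, and it requires one to already control $u$ at $\Lambda^-_\omega$ in some weaker norm, i.e.\ it propagates regularity \emph{into} the source only under an a priori assumption there. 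These are precisely \cite[Propositions E.52, E.54]{res} (or \cite[\S 3]{force}) applied to $P-\omega$, whose rescaled Hamilton flow has $\Lambda^+_\omega$ as a radial sink and $\Lambda^-_\omega$ as a radial source by \S\ref{assumption}. I would then combine them: away from $\Lambda^\pm_\omega$ the operator $P-\omega-i0$ is of real principal type, so by propagation of singularities I can connect any point of the characteristic set to a neighborhood of the source; feeding in the source estimate (valid below threshold) and propagating forward along the flow to the sink, then applying the sink estimate (valid above threshold — consistent because $-1/2-<-1/2<$ nothing is needed at the sink), I obtain $\WF^{-1/2-}(u_\epsilon)=\emptyset$ off a controlled set and the uniform bound $\|u_\epsilon\|_{H^{-1/2-}}\le C(\|f\|_{H^{1/2+}}+\|u_\epsilon\|_{H^{-N}})$. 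The error term $\|u_\epsilon\|_{H^{-N}}$ is removed by a standard contradiction/compactness argument: if it could not be removed, a normalized sequence $u_{\epsilon_k}/\|u_{\epsilon_k}\|_{H^{-1/2-}}$ would converge (using the compact embedding $H^{-1/2-}\hookrightarrow H^{-N}$ and the estimates) to a nonzero $u\in H^{-1/2-}$ solving $(P-\omega)u=0$ with $\WF^{-1/2}(u)\subset\Lambda^+_\omega$; but then a second application of the radial-point propagation (bootstrapping regularity at the sink) forces $u\in L^2(M)$, contradicting \eqref{0notevdelta} that $\omega$ is not an $L^2$-eigenvalue. The same bootstrap gives uniqueness of the solution to $(P-\omega)u=f$, $\WF^{-1/2}(u)\subset\Lambda^+_\omega$.

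For existence of the limit itself I would apply the uniform bound to the difference $u_\epsilon-u_{\epsilon'}$, which solves $(P-\omega)(u_\epsilon-u_{\epsilon'})=i(\epsilon-\epsilon')(P-\omega-i\epsilon')^{-1}(P-\omega-i\epsilon)^{-1}f=i(\epsilon-\epsilon')u_\epsilon+O_{H^{-1/2-}}$-type right-hand side; more cleanly, the resolvent identity gives $u_\epsilon-u_{\epsilon'}=i(\epsilon-\epsilon')(P-\omega-i\epsilon)^{-1}u_{\epsilon'}$, and since $u_{\epsilon'}$ is uniformly bounded in $H^{-1/2-}$ but we only know resolvent bounds $H^{1/2+}\to H^{-1/2-}$, I would instead argue Cauchy on a dense subspace or use the established uniqueness plus a weak-$*$ limit: any weak-$*$ subsequential limit $u$ of $u_\epsilon$ in $H^{-1/2-}$ satisfies $(P-\omega)u=f$ with $\WF^{-1/2}(u)\subset\Lambda^+_\omega$ (the wavefront containment passes to the limit because the radial estimate is uniform), hence by uniqueness the full net converges, and the uniform estimate upgrades weak-$*$ to norm convergence in $H^{-1/2-}$ after a standard argument. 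Finally, continuity of $\omega\mapsto(P-\omega-i0)^{-1}f$ on $[-\delta_0,\delta_0]$ follows from the fact that all constants in the radial and propagation estimates can be taken locally uniform in $\omega$ (the dynamical structure is stable by \S\ref{assumption}) together with the uniqueness statement, by the same contradiction argument applied to a sequence $\omega_k\to\omega$.

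\textbf{Main obstacle.} The delicate point is working exactly at the threshold regularity $s=-1/2$: the radial estimates hold in the open ranges $s>-1/2$ (sink) and $s<-1/2$ (source), so one must run everything with $H^{-1/2-}$ and $H^{1/2+}$ (the "$\pm$" notation in the statement) and be careful that the half-density/subprincipal-symbol contribution puts the true threshold exactly at $-1/2$; getting a self-consistent choice of exponents through the propagation from source to sink, and ensuring the bootstrap at the sink genuinely lands in $L^2$ so the no-eigenvalue hypothesis \eqref{0notevdelta} can be invoked, is where the real work lies.
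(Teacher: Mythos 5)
There is a genuine gap at the decisive step of your compactness/contradiction argument, namely the sentence ``a second application of the radial-point propagation (bootstrapping regularity at the sink) forces $u\in L^2(M)$.'' The limit function produced by your normalization argument is only known to satisfy $(P-\omega)u=0$, $u\in H^{-\frac12-}(M)$ and $\WF^{-\frac12}(u)\subset\Lambda^+$, i.e.\ it has \emph{below-threshold} regularity at $\Lambda^+$. To conclude smoothness there one must treat $\Lambda^+$ as a radial source for $-(P-\omega)$ and apply the high-regularity radial estimate; but that estimate comes with an a priori hypothesis that $u$ already lies in $H^{s_0}$ microlocally near the radial set for some $s_0$ strictly above the threshold $-\frac12$. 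The standard radial estimates (your displayed estimate, \eqref{eq: globalnonsharp}, \eqref{eq: highreg}) cannot cross the threshold from below, so the bootstrap you invoke does not start, and consequently neither the contradiction with \eqref{0notevdelta}, nor the uniqueness statement, nor the convergence argument (which you base on uniqueness) is actually established. You flag this issue yourself in the ``Main obstacle'' paragraph, but you do not supply a mechanism to resolve it; that mechanism is precisely the paper's Lemma \ref{regularitylift} (an analog of \cite[Lemma 2.3]{zeta}), which uses the self-adjointness of $P$ through the sign condition $\Im\langle (P-\omega)u,u\rangle\geq 0$ and a positive-commutator argument with a monotone cutoff $\chi_1(f_1)$ of an escape function $f_1$ satisfying $|\xi|H_pf_1\geq Cf_1$ near $\Lambda^+$, to upgrade $u$ from $H^{-\frac12-}$ to $H^{-\frac12+}$ microlocally at $\Lambda^+$; only after this threshold improvement can \eqref{eq: highreg} be applied near $\Lambda^-$ and (for $-(P-\omega)$) near $\Lambda^+$ to get $u\in C^{\infty}(M)\subset L^2(M)$ and hence $u=0$. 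This lemma is exactly what distinguishes the full limiting absorption principle proved here from the weaker statement in \cite[Lemma 3.3]{force}, and it is absent from your outline.

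A secondary, more cosmetic issue: you have the two threshold regimes interchanged. For the outgoing resolvent $(P-\omega-i0)^{-1}$ the estimate with a control term $B_2$ supported away from the radial set and valid for $s<-\frac12$ is the one at the sink $\Lambda^+$, while the above-threshold estimate $s>-\frac12$ (with the a priori regularity requirement just discussed) is the one at the source $\Lambda^-$; your description assigns them the other way around. Once the roles are corrected and the threshold lemma is inserted, the rest of your scheme (uniform bound by contradiction using the compact embedding $H^{-\frac12-\beta}\hookrightarrow H^{-\frac12-\alpha}$, then convergence of $u_\epsilon$ and continuity in $\omega$) matches the paper's proof.
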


In the proof of Proposition \ref{limitabsorp}, we will use the following
\begin{lemm}
\label{regularitylift}
Suppose $P$, $\omega$ satisfy conditions in Proposition \ref{limitabsorp}. If $u\in \mathscr{D}^{\prime}(M)$ and
\begin{equation}
(P-\omega)u\in C^{\infty},\quad \WF^{-\frac{1}{2}}(u)\subset \Lambda^+,\quad \Im\langle (P-\omega)u,u \rangle\geq 0,
\end{equation}
then $u\in H^{-\frac{1}{2}+}(M)$.
\end{lemm}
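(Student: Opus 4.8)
The plan is to upgrade the a priori regularity of $u$ from $H^{-1/2-}$ to $H^{-1/2+}$ using the microlocal radial estimates at the radial sink $\Lambda^+$ and the radial source $\Lambda^-$, together with propagation of singularities along the rescaled Hamiltonian flow of $X$ on $\Sigma$, and to use the sign condition $\Im\langle (P-\omega)u,u\rangle\ge 0$ to select the favorable threshold inequality at $\Lambda^+$. Since $P$ is not itself semiclassical, the first move is to localize: fix a cutoff $\chi=\chi(x,hD)$ as in Lemma \ref{semi} which is $1$ for $|\xi|\ge 2R_0$ and $0$ for $|\xi|\le R_0$, so that $[P,\chi(x,hD)]\in h\Psi_h^{\comp}(M)$ with principal symbol $-i\{p,\chi\}$; then $\chi(x,hD)u$ is an $h$-tempered family to which semiclassical radial estimates and semiclassical propagation of singularities apply, and the non-semiclassical statement $\WF^{-1/2}(u)\subset\Lambda^+$ translates (via Definition \ref{relativewf} and Lemma \ref{emptywf}) into the corresponding statement about $\WF_h^{-1/2}$ of the localized family off a compact subset of $T^*M\setminus 0$.

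The heart of the argument is the radial estimate at $\Lambda^+$. The Lagrangian $\Lambda^+_\omega=\kappa^{-1}(L^+_\omega)$ is a radial sink for the Hamiltonian flow of $|\xi|\sigma(P-\omega)$, and since the subprincipal symbol is (in general) not assumed to vanish here, the relevant threshold regularity $s_0$ is governed by the subprincipal symbol $c$ and the Lyapunov data of $L^+_\omega$; the sign hypothesis $\Im\langle(P-\omega)u,u\rangle\ge 0$ is exactly what is needed to run the \emph{above-threshold} radial estimate (the one that propagates regularity \emph{into} the sink and does not require assuming regularity at $\Lambda^+$ in advance) — this is the standard dichotomy in the Vasy/Dyatlov--Zworski radial-point calculus, cf. \cite[Lemma 3.3]{force} and \cite[\S E.4]{res}. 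Concretely: first, an estimate at the radial source $\Lambda^-$ shows that, since $\Lambda^-$ is a source and nothing forces singularities there, $u$ is in $H^{s}$ microlocally near $\Lambda^-$ for every $s$; then propagation of singularities along $X$ carries this microlocal regularity from a punctured neighborhood of $\Lambda^-$ forward along every bicharacteristic on $\Sigma$ to a punctured neighborhood of $\Lambda^+$ (using $(P-\omega)u\in C^\infty$ so there is no loss along the flow, and Morse--Smale with no fixed points so every trajectory limits onto $\Lambda^\pm$); finally, the above-threshold radial estimate at $\Lambda^+$ — whose functional-analytic input is precisely the nonnegativity of $\Im\langle(P-\omega)u,u\rangle$ — upgrades the microlocal regularity \emph{at} $\Lambda^+$ to $H^{-1/2+}$ there (one checks $-1/2$ lies in the admissible range of the estimate, which is where the normalization of orders in the problem has been arranged). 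Combining the three microlocal regions, which together cover $\Sigma$, and noting that off $\Sigma$ the operator $P-\omega$ is microlocally elliptic (so $u$ is smooth there), we obtain $\WF^{-1/2-\epsilon_0}(u)=\emptyset$ for some $\epsilon_0>0$, i.e. $u\in H^{-1/2+}(M)$ by Lemma \ref{emptywf}.

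The main obstacle, and the point requiring genuine care rather than bookkeeping, is the bdecoupling between the two propagation mechanisms near the limit cycles: on $\Sigma$ the flow of $X$ spirals onto $L^\pm_\omega$, while \emph{transversally} to $\Sigma$ the Hamiltonian flow of $|\xi|\sigma(P-\omega)$ has source/sink behavior, and these happen at different Lyapunov rates (as emphasized in the introduction). One must therefore phrase the radial estimate at $\Lambda^\pm$ not as a single scalar threshold but with a neighborhood argument in $\overline{T}^*M$ that respects both the tangential (along $\Sigma$) and normal directions, and check that the error terms generated by the cutoff $\chi$ — supported in a compact shell of $T^*M\setminus 0$ — are harmless because they can be absorbed using ellipticity away from $\Sigma$ and the a priori $H^{-1/2-}$ bound. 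A secondary technical point is uniformity: since later (in Proposition \ref{limitabsorp}) this lemma is invoked for a range of $\omega\in[-\delta_0,\delta_0]$ and one wants continuity in $\omega$, the radial estimates should be stated with constants locally uniform in $\omega$, which follows from the stability of the Morse--Smale structure of $\Sigma(\omega)$ noted in \S\ref{assumption} but should be recorded. Modulo these points, the proof is the standard ``source $\to$ propagation $\to$ sink with the right sign'' scheme, and I expect it to be short once the semiclassical reduction via Lemma \ref{semi} is in place.
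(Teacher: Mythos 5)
There is a genuine gap at the one step that actually matters. Your scheme treats the estimate at the sink as a quotable black box: an ``above-threshold radial estimate at $\Lambda^+$ whose functional-analytic input is the sign condition,'' cited to \cite[Lemma 3.3]{force} and \cite[\S E.4]{res}. No such estimate exists in those references: the standard radial estimates give high regularity at a \emph{source} (assuming a priori regularity above threshold there) and give only \emph{below}-threshold regularity at a sink, with $-\frac12$ being exactly the threshold in this problem. So running ``source $\to$ propagation $\to$ sink'' with the standard calculus would only reproduce $u\in H^{-\frac12-}$ microlocally at $\Lambda^+$, which is what we already know; your parenthetical ``one checks $-1/2$ lies in the admissible range'' is precisely the point that fails. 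Crossing the threshold at the sink is the entire content of the lemma, and it is not a corollary of the radial-point calculus plus the sign hypothesis stated abstractly: it needs a specific positive-commutator argument. The paper (adapting \cite[Lemma 2.3]{zeta}, as its remark indicates) constructs an escape function $f_1$, homogeneous of degree $1$, with $f_1\geq c|\xi|$ and $|\xi|H_pf_1\geq cf_1$ near $\Lambda^+$ (from \cite[Lemma C.1]{dynamicalzeta}), quantizes $\chi_1(f_1)$ to a self-adjoint $X_h$, uses Lemma \ref{semi} to make $[P,X_h]\in h\Psi_h^{\comp}$ with symbol $\frac12\chi_1'(f_1)H_pf_1\geq 0$, applies sharp G{\aa}rding, and uses the hypothesis $\Im\langle (P-\omega)u,u\rangle\geq 0$ exactly once, to discard $-\Im\langle (P-\omega)u,X_hu\rangle$ up to $O(h^\infty)$ (the difference from $-\Im\langle (P-\omega)u,u\rangle$ is controlled because $(P-\omega)u\in C^\infty$ has semiclassical wavefront set in $\{\xi=0\}$, disjoint from $\WF_h(I-X_h)$). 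This yields an inequality of the form $\|\Op_h(a)u\|_{L^2}\leq Ch^{\frac12}\|\Op_h(b)u\|_{L^2}+O(h^{-\frac12+})$, which must then be \emph{iterated}, gaining $h^{\frac12}$ per step starting from the a priori $H^{-N}$ bound, before Lemma \ref{emptywf} can be invoked. None of this bootstrap structure appears in your proposal.

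A secondary remark: the source estimate at $\Lambda^-$ and the propagation step are superfluous here. The hypothesis $\WF^{-\frac12}(u)\subset\Lambda^+$ already gives the needed $O(h^{-\frac12+})$ bounds microlocally everywhere off $\Lambda^+$ (in particular at $\Lambda^-$ and along the flow), so the only region to improve is $\Lambda^+$ itself; likewise the uniformity-in-$\omega$ discussion is not needed for this lemma. These extra steps are not wrong, but they do not compensate for the missing commutator argument at the sink, which is where the proof actually lives.
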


\Remark
Lemma \ref{regularitylift} is an analog of \cite[Lemma 2.3]{zeta} and the proof here is a modification of the argument there.

\begin{proof}[Proof of Lemma \ref{regularitylift}.]

We only need to show that for any $a\in C^{\infty}_c(T^*M\setminus 0; \RR)$, there exists $b\in C^{\infty}_c(T^*M\setminus 0; \RR)$ such that
\begin{equation}
\label{iterate}
    \|\Op_h(a)u\|_{L^2}\leq Ch^{\frac12}\|\Op_h(b)u\|_{L^2}+\mathcal{O}(h^{-\frac12+}),\quad h\rightarrow 0.
\end{equation}
In fact, fix $N>0$ such that $u\in H^{-N}(M)$, then for any $a\in C^{\infty}_c(T^*M\setminus 0; \RR)$, we have $\| \Op_h(a)u \|_{L^2}\leq Ch^{-N}$.
By applying this uniform estimate to $\Op_h(b)$ in \eqref{iterate} we find
\begin{equation}
\label{iter}
    \|\Op_h(a)u\|_{L^2}=\mathcal{O}(h^{-N+\frac12})+\mathcal{O}(h^{-\frac12+})
=\mathcal{O}(h^{-N+\frac12}).
\end{equation}
We then replace $a$ by $b$ in \eqref{iter} and use $\eqref{iterate}$ again and find
\begin{equation}
    \|\Op_{h}(a)u\|_{L^2}=\mathcal{O}(h^{\min\{-N+1,-\frac12+\}}).
\end{equation}
After finite steps we get $\|\Op_h(a)u\|_{L^2}=\mathcal{O}(h^{-\frac12+})$. By \eqref{nonsemiwf} we have $\WF^{-\frac{1}{2}}(u)=\emptyset$. Thus $u\in H^{-\frac12+}$ by Lemma \ref{emptywf}.

We now prove \eqref{iterate}.

We first note that there exists $f_1\in C^{\infty}(T^*M\setminus 0; \RR)$ such that
\begin{enumerate}
    \item $f_1$ is homogeneous of degree $1$;
    \item $f_1\geq 0$ and there exists $C>0$ such that $f_1(x,\xi)\geq C|\xi|$ near $\Lambda^+$;
    \item $|\xi|H_p f_1\geq Cf_1$ near $\Lambda^+$.
\end{enumerate}
For the construction of $f_1$, see \cite[Lemma C.1]{dynamicalzeta}.

Let $\chi_1\in C_{c}^{\infty}(\RR; \RR)$, such that $\chi_1=1$ near $0$, $\chi_1^{\prime}\leq 0$ on $[0,\infty)$ and $\chi_1^{\prime}<0$ on $f_1(\supp a)$.
Let $X_h\in \Psi^0_{h}(M)$, such that $\sigma_h(X_h)=\chi_1(f_1)$, and $X_h^*=X_h$.
Now we have
\begin{equation}
     \Im\langle (P-\omega)u, X_h u \rangle
    =  \langle \tfrac{i}{2}[P,X_h]u,u \rangle.
\end{equation}
Note that $P$ is \emph{not} a semiclassical pseudo-differential operator. However, by Lemma \ref{semi}, $[P,X_h]$ \emph{is} a semiclassical pseudo-differential operator in $h\Psi_h^{\comp}(M)$, and
\begin{equation}
    \sigma_h(\tfrac{i}{2h}[P,X_h])
    =\tfrac{1}{2}\chi_1^{\prime}(f_1)H_pf_1.
\end{equation}
By the assumptions we know
\begin{equation}
    \sigma_h(\tfrac{i}{2h}[P,X_h])\geq 0\quad \text{and}\quad \sigma_h(\tfrac{i}{2h}[P,X_h])>0 \quad\text{on}\quad \Lambda^+\cap\supp{a}.
\end{equation}
Thus we can find $a_1\in C^{\infty}_c(T^*M\setminus 0; \RR)$ such that $\supp {a_1}\cap \Lambda^+=\emptyset$ and
\begin{equation}
    \sigma_h(\tfrac{i}{2h}[P,X_h])+|a_1|^2\geq C^{-1}|a|^2.
\end{equation}
Let $b\in C^{\infty}_c(T^*M\setminus 0; \RR)$ such that
\begin{equation}
    (\WF_h([P,X_h])\cup\supp{a_1}\cup\supp{a})\cap\supp(1-b)=\emptyset.
\end{equation}
By sharp G{\aa}rding's inequality (see \cite[Proposition E.34]{res} for instance) we have
\begin{equation}
    \|\Op_h(a)u\|_{L^2}^2\leq Ch\|\Op_h(b)u\|_{L^2}^2+C\|\Op_h(a_1) u\|_{L^2}^2-h^{-1}\Im\langle (P-\omega)u,Xu \rangle.
\end{equation}
Since $\WF_h(A_1)\cap\Lambda^+=\emptyset$, and $\WF^{-\frac{1}{2}}(u)\subset\Lambda^+$, we have $\|A_1 u\|_{L^2}=O(h^{-\frac{1}{2}+})$. For the last term,
\begin{equation}\begin{split}
     -\Im\langle (P-\omega)u,Xu \rangle
    \leq  \Im\langle (I-X)(P-\omega)u, u \rangle +O(h^{\infty})= O(h^{\infty}).
\end{split}\end{equation}
Here we used the fact that
\begin{equation}\begin{split}
& (P-\omega)u\in C^{\infty}(M)\\
\Rightarrow & \WF_{h}((P-\omega)u)\cap\WF_h(I-X)
\subset \{\xi=0\}\cap(\overline{T}^*M\setminus 0)=\emptyset.
\end{split}\end{equation}
See also Lemma \ref{disjoint}.

Thus we have
\begin{equation}
    \|Au\|_{L^2}\leq Ch^{1/2}\|Bu\|_{L^2}+O(h^{-\frac{1}{2}+}).
\end{equation}
This concludes the proof.
\end{proof}

In the proof of Proposition \ref{limitabsorp}, we need the following estimates:
for $\epsilon>0$, let $u_{\epsilon}:=(P-\omega-i\epsilon)^{-1}f$, then
\begin{enumerate}
    \item For any $\beta>0$, we have
        \begin{equation}
        \label{eq: globalnonsharp}
            \|u_\epsilon\|_{H^{-\frac{1}{2}-\beta}}\leq C\|f\|_{H^{\frac{1}{2}+\beta}}+C\|u_{\epsilon}\|_{H^{-N}}.
        \end{equation}
    \item If $A\in \Psi^0(M)$ is compactly supported and $\WF(A)\cap\Lambda^+=\emptyset$, then
        \begin{equation}
        \label{eq: highreg}
            \|Au_{\epsilon}\|_{H^s}\leq C\|f\|_{H^{s+1}}+C\|u_{\epsilon}\|_{H^{-N}}
        \end{equation}
        for $s>-\frac{1}{2}$.
\end{enumerate}
The estimates \eqref{eq: globalnonsharp} and \eqref{eq: highreg} are obtained by using radial estimates. For the proof of \eqref{eq: globalnonsharp} and \eqref{eq: highreg}, we refer to \cite[(3.5)]{force} and \cite[(3.6)]{force}.

Now we prove the limiting absorption principle. We modify the proof of \cite[Lemma 3.3]{force} which in turn was a modification of an argument in \cite{mel}.

\begin{proof}[Proof of Proposition \ref{limitabsorp}.]

For $f\in H^{\frac{1}{2}+}$, $\epsilon>0$, denote
\begin{equation}
    u_{\epsilon}:=(P-\omega-i\epsilon)^{-1}f.
\end{equation}
By \eqref{eq: globalnonsharp}, we know $u_{\epsilon}\in H^{-\frac{1}{2}-}$ and by \eqref{eq: highreg}, we know that $\WF^{-\frac{1}{2}}(u)\subset\Lambda^+$.

We first show that for any $\alpha>0$,  $u_{\epsilon}$ is bounded in $H^{-\frac{1}{2}-\alpha}$. Suppose the contrary, then we can find $\epsilon_j\rightarrow 0$ such that  $\|u_{\epsilon_j}\|_{H^{-\frac{1}{2}-\alpha}}\rightarrow \infty$. Put $v_j:=u_{\epsilon_j}/\|u_{\epsilon_j}\|_{H^{-\frac{1}{2}-\alpha}}$. We have
\begin{equation}
    (P-\omega-i\epsilon_j)v_{j}=f_j, \quad f_j=f/\|u_{\epsilon_j}\|_{H^{-\frac{1}{2}-\alpha}}, \quad f_j\xrightarrow{H^{\frac{1}{2}+}}0.
\end{equation}
By \eqref{eq: globalnonsharp}, $v_j$ in bounded in $H^{-\frac{1}{2}-\beta}$ for any $\beta$ if we let $N=\frac{1}{2}+\alpha$.
Since $H^{-\frac{1}{2}-\beta}\hookrightarrow H^{-\frac{1}{2}-\alpha}$ is compact for $0<\beta<\alpha$, by passing to a subsequence, we can assume $v_j\rightarrow v$ for some $v$ in $H^{-\frac{1}{2}-\alpha}$. Let $j\rightarrow \infty$ and we find
\begin{equation}
    (P-\omega)v=0,\quad \WF^{-\frac{1}{2}}(v)\subset \Lambda^+.
\end{equation}
By Lemma \ref{regularitylift}, we have
\begin{equation}
    v\in H^{-\frac{1}{2}+}(M).
\end{equation}
Thus we can apply high regularity estimates \eqref{eq: highreg} to $P-\omega$ near $\Lambda^-$ and to $-(P-\omega)$ near $\Lambda^+$. And thus we have
\begin{equation}
    \|v\|_{H^{s}}\leq C\|v\|_{H^{-N}}
\end{equation}
for any $s$ and $N$. This implies $v\in C^{\infty}(M)$, especially, $v\in L^2(M)$. Hence we conclude that $v\equiv 0$.
This contradicts with $\|v_j\|_{H^{-\frac{1}{2}-\alpha}}=1$.
\color{black}

We conclude that $u_{\epsilon}$ is bounded in $H^{-\frac{1}{2}-\alpha}$ for any $\alpha>0$. And use the compact embedding $H^{-\frac{1}{2}-\beta}\hookrightarrow H^{-\frac{1}{2}-\alpha}$ when $\beta<\alpha$, we know $u_{\epsilon}$ converges in $H^{-\frac{1}{2}-\alpha}$ for any $\alpha>0$. By \eqref{eq: globalnonsharp} and \eqref{eq: highreg}, and $f\in H^{\frac{1}{2}+}$, we know the limit $u:=(P-\omega-i0)^{-1}f\in H^{-\frac{1}{2}-}$ satisfies
\begin{equation}
    (P-\omega)u=f, \quad \WF^{-\frac{1}{2}}(u)\subset \Lambda^+.
\end{equation}
\end{proof}


The Lagrangian regularity of the distributions in the range of $(P-\omega\pm i0)^{-1}$ is proved in \cite[Lemma 4.1]{force}. We record this as
\begin{lemm}
\label{lagregularity}
Suppose $P$, $\omega$ satisfy conditions in Proposition \ref{limitabsorp}. Let $f\in C^{\infty}(M)$ and
\begin{equation}
u^{\pm}(\omega):=(P-\omega\mp i0)^{-1}f\in H^{-\frac{1}{2}-}(M).
\end{equation}
Then $u^{\pm}(\omega)\in I^0(M;\Lambda^{\pm}_{\omega})$.
\end{lemm}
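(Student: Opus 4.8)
I would follow the proof of \cite[Lemma 4.1]{force}. Write $u^{\pm}:=(P-\omega\mp i0)^{-1}f$. By Proposition \ref{limitabsorp} (and, for $u^-$, its complex conjugate, legitimate since $P$ is self-adjoint) we already know $u^{\pm}\in H^{-\frac{1}{2}-}(M)$, $(P-\omega)u^{\pm}=f$, and $\WF^{-\frac12}(u^{\pm})\subset\Lambda_{\omega}^{\pm}$. Since $\dim M=2$ one has $I^0(M,\Lambda_{\omega}^{\pm})\subset H^{-\frac{1}{2}-}(M)$, so the Sobolev order is already the right one; what has to be produced is the Lagrangian (symbolic) structure. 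The first step is to sharpen $\WF^{-\frac12}(u^{\pm})\subset\Lambda_{\omega}^{\pm}$ to $\WF(u^{\pm})\subset\Lambda_{\omega}^{\pm}$: as $f\in C^{\infty}(M)$, elliptic regularity confines $\WF(u^{\pm})$ to the characteristic set $\Sigma(\omega)$; near the radial source $\Lambda_{\omega}^{\mp}$ we have $\WF^{-\frac12}(u^{\pm})\cap\Lambda_{\omega}^{\mp}=\emptyset$, and feeding this into the radial source estimate and iterating (each step gains a derivative because $f$ is smooth) shows $u^{\pm}$ is smooth microlocally in a punctured conic neighborhood of $\Lambda_{\omega}^{\mp}$; finally, by the Morse--Smale assumption \eqref{nofixedpt} every bicharacteristic of $P$ in $\Sigma(\omega)\setminus(\Lambda_{\omega}^+\cup\Lambda_{\omega}^-)$ enters that neighborhood when run backward toward $\Lambda_{\omega}^{\mp}$, so real-principal-type propagation of singularities along the flow gives $\WF(u^{\pm})\cap\big(\Sigma(\omega)\setminus(\Lambda_{\omega}^+\cup\Lambda_{\omega}^-)\big)=\emptyset$, i.e.\ $\WF(u^{\pm})\subset\Lambda_{\omega}^{\pm}$.

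The second step is to invoke the characterization of Lagrangian distributions by iterated regularity under a module of first-order operators. Let $\mathcal{N}$ be the $\Psi^0(M)$-module generated by finitely many $A_1,\dots,A_m\in\Psi^1(M)$ whose principal symbols are homogeneous of degree $1$, vanish on $\Lambda_{\omega}^{\pm}$, and whose differentials span the conormal directions to $\Lambda_{\omega}^{\pm}$ at every point. Then for $v\in\mathscr{D}'(M)$ with $\WF(v)\subset\Lambda_{\omega}^{\pm}$ one has $v\in I^0(M,\Lambda_{\omega}^{\pm})$ if and only if $Bv\in H^{-\frac{1}{2}-}(M)$ for every finite product $B$ of the generators. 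In view of the first step it thus remains to prove
\begin{equation}
    \mathcal{N}^N u^{\pm}\subset H^{-\tfrac12-}(M)\qquad\text{for every }N.
\end{equation}

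The third step is a radial-point estimate whose commutant is built from the module, carried out by induction on $N$ (the case $N=0$ being Proposition \ref{limitabsorp}). Naively, writing $B\in\mathcal{N}^N$ as a product of generators, $(P-\omega)(Bu^{\pm})=Bf+[P,B]u^{\pm}$ with $Bf\in C^{\infty}(M)$, but since $P-\omega$ has order $0$ the leading part of $[P,B]u^{\pm}$ is again of module order $N$, so this alone does not close; instead one tests $(P-\omega)u^{\pm}=f$ against $\Op_h(b)^{*}\Op_h(b)u^{\pm}$, where $b$ is a product of $N$ of the homogeneous defining symbols of $\Lambda_{\omega}^{\pm}$ times a microlocal cutoff supported in a small conic neighborhood of $\Lambda_{\omega}^{\pm}$ (the commutator $[P,\Op_h(b)^{*}\Op_h(b)]$ is then semiclassical by Lemma \ref{semi}). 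Because $\Lambda_{\omega}^{\pm}$ is a radial sink for $u^+$ and a radial source for $u^-$, the principal part of $\tfrac{i}{2h}[P,\Op_h(b)^{*}\Op_h(b)]$ has a definite sign on $\Lambda_{\omega}^{\pm}$ once the leading commutator contribution of $[P,B]$ — which shifts the radial threshold by an amount governed by the Lyapunov exponents $\lambda_j^{\pm}$ — is absorbed, the shift being in the favorable direction relative to the $\mp i0$ prescription; the remaining wrong-sign or borderline terms involve at most $N-1$ module factors and are controlled by the inductive hypothesis $\mathcal{N}^{N-1}u^{\pm}\subset H^{-\frac{1}{2}-}(M)$ together with $\WF(u^{\pm})\subset\Lambda_{\omega}^{\pm}$ from the first step. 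Sharp G\aa rding then bounds $\|Bu^{\pm}\|_{H^{-\frac{1}{2}-}}$ by Sobolev norms of $f$ (finite, as $f\in C^{\infty}$) and a weak norm of $u^{\pm}$, completing the induction.

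I expect the main obstacle to be precisely this radial-point analysis at the Lagrangian $\Lambda_{\omega}^{\pm}$: one must choose the commutant so that a single positive-commutator argument closes simultaneously at every module order, verify that the relevant threshold stays on the favorable side of $-\tfrac12$ at every order — this is where the signs of the Lyapunov exponents $\lambda_j^{\pm}$ and the $\mp i0$ prescription genuinely enter — and check that the lower-order errors are absorbed rather than accumulating. This is exactly propagation of singularities around a Lagrangian of radial points and is the technical heart; everything else is bookkeeping. (Alternatively, one can transplant the problem to the microlocal normal form of Lemma \ref{normal}, where $P-\omega$ becomes an explicit model operator whose resolvent output can be written down and inspected directly.) Finally, since the uniform Sobolev exponent $-\tfrac12=-\tfrac{n}{4}$ with $n=2$ appearing in $\mathcal{N}^N u^{\pm}\subset H^{-\frac{1}{2}-}(M)$ is exactly the one characterizing order $0$, this identifies $u^{\pm}(\omega)$ as an element of $I^0(M,\Lambda_{\omega}^{\pm})$.
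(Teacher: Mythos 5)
Your proposal is essentially the paper's own route: the paper gives no independent argument for this lemma but simply cites \cite[Lemma 4.1]{force}, and your sketch (wavefront confinement to $\Lambda^{\pm}_{\omega}$ via elliptic regularity, radial estimates and propagation, then iterated module regularity in $H^{-\frac12-}$ proved by inductive positive-commutator radial estimates at the threshold) is an accurate outline of that cited proof. So the approach is correct and matches the source the paper relies on; no further comparison is needed.
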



\section{transport equation}
\label{te}

From now on, up to \S \ref{msotsm}, we put $\omega=0$. We omit $P$ and $\omega$ in some notations for simplicity if there is no ambiguity. The results throughout this section and \S \ref{msotsm} hold for any $\omega\in \RR$ that satisfies assumptions in \S \ref{eigenvalue} and that is not an embedded eigenvalue of $P$.  

Suppose $L^{\pm}\subset \partial T^*M$ are the radial sink ($+$) and the radial source $(-)$. Then $\Lambda^{\pm}=\kappa^{-1}(L^{\pm})\subset \Sigma_0:=\{p(x,\xi)=0\}$ are conic Lagrangian submanifolds. There exist densities $\nu^{\pm}$ on $\Lambda^{\pm}$ that are homogeneous of order $1$ and invariant under the Hamiltonian flow by \cite[Lemma 2.5]{force}. If we use $\nu^{-}$ and $e^{\frac{\pi i}{4}\sgn \varphi^{\prime\prime}}$ with fixed covering and generating functions (see \S \ref{lagrangian}) to trivialize the half-density bundle $\Omega^{\frac{1}{2}}_{\Lambda^{-}}$ and the Maslov bundle $\mathcal{M}_{\Lambda^{-}}$, then the principal symbol of $u\in I^s(\Lambda^{-})$ can be locally written as
\begin{equation}
\sigma(u)=e^{\frac{\pi i}{4}\sgn\varphi^{\prime\prime}}a(x,\xi)\sqrt{\nu^{-}}
\end{equation}
for some $a\in S^{s}(\Lambda)$. Here we recall that
\begin{equation}
S^s(\Lambda):=\{a\in C^{\infty}(\Lambda): t^{-s}M_ta \quad\text{is uniformly bounded in $C^{\infty}(\Lambda)$ for $t>1$}\}
\end{equation}
where $M_t$ is the dilation in $\xi$, see \cite[Definition 21.1.8]{horIII} and \cite[\S 25.1]{horIV}. We also define $S^{-\infty}(\Lambda):=\cap_{s\in\RR}S^{s}(\Lambda)$.

Since $p$ vanishes on $\Lambda^-$, by \eqref{transport} we know $Pu\in I^{s-1}(\Lambda^-)$ and if
\begin{equation}
\sigma(Pu)=e^{\frac{\pi i}{4}\sgn{\varphi^{\prime\prime}}}b(x,\xi)\sqrt{\nu^-}
\end{equation}
for some $b\in S^{s-1}(\Lambda)$ then
\begin{equation}
\label{trans}
(\tfrac{1}{i}H_p+V^-)a =b
\end{equation}
here $V^-\in C^{\infty}(\Lambda^-;\RR)$ is a real-valued potential that is homogeneous of order $-1$ -- see \cite[(4.29)]{force}.


Now we want to solve the transport equation \eqref{trans}.
We first recall some notations. Let $\iota$ be the radial compactification of $T^*M$: $\iota: T^*M \rightarrow B^*M$, $(x,\xi)\mapsto (x, \xi/(1+\langle \xi \rangle))$, where $B^*M$ is the coball bundle modeling $\overline{T}^*M$ (see \cite[Appendix E.1.3]{res}). Let $d$ be the number of connected components of $\Lambda^{\pm}$.
\begin{lemm}
\label{transversal}
There exist open subsets $\mathcal{O}^{\pm}$ of $\Lambda^{\pm}$ and submanifolds $K^{\pm}$ of $\Lambda^{\pm}$ such that
\begin{enumerate}
    \item $\iota(\mathcal{O}^{\pm})\subset \overline{T}^*M$ are neighborhoods of $L^{\pm}$ in $\iota(\Lambda^{\pm})\subset\overline{T}^*M$. 
    
    \item $\partial \mathcal{O}^{\pm}=K^{\pm}$. Here $\partial \mathcal{O}^{\pm}$ are the boundary of $\mathcal{O}^{\pm}$ in $\Lambda^{\pm}$;

    \item $K^{\pm}$ are diffeomorphic to $\bigsqcup_d\mathbb{S}^1$;

    \item $K^{\pm}$ are transversal to the flow lines generated by $H_p$, and each flow line meets $K^{\pm}$ at most once;

    \item For any $(x,\xi)\in K^{\pm}\cup \mathcal{O}^{\pm}$, $e^{tH_p}(x,\xi)$ converges to $L^{\pm}$ as $t\rightarrow \pm \infty$.
    \item There exist smooth densities $\mu^{\pm}(z)$ on $K^{\pm}$ such that
        \begin{equation}
        \label{density}
            \nu^{\pm}(e^{tH_p}z)|_{\mathcal{O}^{\pm}}=\mu^{\pm}(z)dt
        \end{equation}
        for $(z,t)\in K^{\pm}\times \RR$, $\pm t>0$.
\end{enumerate}

\begin{proof}

In fact, let $f_2 \in C^{\infty}(\Lambda^{-};\RR)$ be the restriction of $f_1$ to $\Lambda^-$, where $f_1$ is defined in Lemma \ref{regularitylift}. Recall that
\begin{equation}
f_2~~~~ \text{is homogeneous of order} ~~~~ 1,\quad H_p f_2\geq c, \quad  f_2(x,\xi)\geq c|\xi| ~~~~ \text{with}~~~~ c>0.
\end{equation}
We can put
\begin{equation}
K^{-}:=\{  f_2=1 \}, \quad \mathcal{O}^{-}:=\{ f_2>1 \}.
\end{equation}
Then $K^-$ and $\mathcal{O}^-$ satisfy conditions in Lemma \ref{transversal}.

For (6): suppose $\nu^-(e^{tH_p}z)=\alpha^-(z,t)dz^-dt$, here $\alpha^-\in C^{\infty}(K^-\times (-\infty,0))$, $dz^-$ is some fixed smooth density on $K^{-}$, $dt$ is the Lebesgue density on $(-\infty,0)$. Then
\begin{equation}
\mathcal{L}_{H_p}\nu^-=0 \Rightarrow \partial_{t}\alpha^-=0.
\end{equation}
Thus $\alpha^-=\alpha^-(z)$. Put $\mu^-(z)=\alpha^-(z)dz^-$ and we get \eqref{density}.

Similarly one can construct $K^+$ and $\mathcal{O}^+$ by considering the radial source for $-P$.
\end{proof}
\end{lemm}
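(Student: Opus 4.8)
The plan is to build $K^-,\mathcal{O}^-,\mu^-$ out of a single escape function adapted to the radial source $\Lambda^-$; the objects attached to $\Lambda^+$ are produced by running the same argument with $P$ replaced by $-P$, which interchanges the radial sink and the radial source. First I would run the construction behind $f_1$ in Lemma~\ref{regularitylift}, i.e.\ \cite[Lemma~C.1]{dynamicalzeta}, but near $\Lambda^-$ instead of $\Lambda^+$, to obtain a function $f_2$ homogeneous of degree $1$, comparable to $|\xi|$ on a conic neighbourhood of $\Lambda^-$ in $\Sigma_0=\{p=0\}$, and strictly monotone along the $H_p$-flow there, with the sign chosen so that $f_2\to+\infty$ along the approach to $L^-$ (equivalently $H_pf_2<0$ near $\Lambda^-$). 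Restricting $f_2$ to the flow-invariant submanifold $\Lambda^-\subset\Sigma_0$ keeps all three properties, and I then set
\[
K^-:=\{f_2=1\},\qquad \mathcal{O}^-:=\{f_2>1\}\qquad\text{inside }\Lambda^-.
\]

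Parts (1)--(4) are then essentially formal. Since $f_2$ is homogeneous of degree $1$ and comparable to $|\xi|$ on $\Lambda^-$, the set $\{f_2>1\}$ is a neighbourhood of fibre infinity in $\Lambda^-$, so $\iota(\mathcal{O}^-)$ is a neighbourhood of $L^-$ in $\iota(\Lambda^-)$; $H_pf_2\ne0$ on $\{f_2=1\}$ forces $df_2\ne0$ there, so $K^-$ is a smooth hypersurface with $\partial\mathcal{O}^-=K^-$ in $\Lambda^-$; and because $f_2$ is positive and homogeneous of degree $1$ on the cone $\Lambda^-=\kappa^{-1}(L^-)$, whose $d$ connected components are each a trivial $\RR^+$-bundle over a circle, the level set $\{f_2=1\}$ meets each ray exactly once and is therefore a global section, hence diffeomorphic to $L^-\cong\bigsqcup_d\mathbb{S}^1$. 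For (4), transversality is $df_2(H_p)=H_pf_2\ne0$ on $K^-$, and since $t\mapsto f_2(e^{tH_p}(x,\xi))$ is strictly monotone wherever defined it equals $1$ at most once, i.e.\ each flow line meets $K^-$ at most once.

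For (5) and (6) I would analyse the flow-out map $\Phi\colon K^-\times(-\infty,0]\to\Lambda^-$, $\Phi(z,t)=e^{tH_p}(z)$. Flowing backwards from $K^-$ makes $f_2$ increase strictly to $+\infty$, and since $f_2$ is comparable to $|\xi|$ on $\Lambda^-$ this forces $|\xi|\to\infty$, hence $e^{tH_p}(z)\to\overline{\Lambda^-}\cap\partial\overline{T}^*M=L^-$ as $t\to-\infty$; here I use completeness of the rescaled flow $|\xi|H_p$ to exclude escape in finite time and the Morse--Smale structure of $X$ to identify the limit at fibre infinity with the repulsive cycle $L^-$. The same estimate proves (5) for every point of $\mathcal{O}^-\cup K^-$. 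Conversely, flowing a point of $\mathcal{O}^-$ forwards makes $f_2$ decrease strictly, and by Morse--Smale the only possible $\omega$-limit set of a forward orbit contained in $\Lambda^-$ is an attractive cycle, which is disjoint from $\overline{\Lambda^-}$; hence that orbit leaves every compact subset of $\Lambda^-$ towards the zero section, and as $f_2\le C|\xi|$ this drives $f_2$ below $1$, so the orbit crosses $K^-$. Thus $\Phi$ is a bijection onto $\mathcal{O}^-\cup K^-$, and being a bijective local diffeomorphism (by transversality of the flow to $K^-$) it is a diffeomorphism. Finally, for (6), write $\Phi^*\bigl(\nu^-|_{\mathcal{O}^-}\bigr)=\alpha^-(z,t)\,dz^-\,dt$ with $dz^-$ a fixed smooth density on $K^-$; flow invariance $\mathcal{L}_{H_p}\nu^-=0$ (from \cite[Lemma~2.5]{force}) gives $\partial_t\alpha^-=0$, so $\alpha^-=\alpha^-(z)$ and $\mu^-:=\alpha^-(z)\,dz^-$ verifies \eqref{density}. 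The triple $K^+,\mathcal{O}^+,\mu^+$, with $t$ now ranging over $[0,\infty)$, comes from the identical argument applied to $-P$. I expect the only genuinely delicate point to be (5): verifying that the backward flow-out of $K^-$ fills exactly a neighbourhood of $L^-$ in $\iota(\Lambda^-)$, neither too small nor escaping elsewhere, which is where the Morse--Smale hypothesis and the homogeneity and completeness of the rescaled Hamiltonian flow are genuinely used.
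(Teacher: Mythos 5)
Your proposal is correct and follows essentially the same route as the paper: define $K^-$ and $\mathcal{O}^-$ as the unit level set and superlevel set of a degree-one homogeneous escape function comparable to $|\xi|$ and monotone along $H_p$ on $\Lambda^-$, deduce (6) from $\mathcal{L}_{H_p}\nu^-=0$ in flow-out coordinates, and obtain the $+$ case by applying the argument to $-P$. The only differences are cosmetic: you build the escape function directly adapted to $\Lambda^-$ (with the orientation $H_pf_2<0$ there, which is the consistent sign) rather than restricting the $f_1$ of Lemma~\ref{regularitylift}, and you spell out the verifications of (1)--(5) and the bijectivity of the flow-out map, which the paper leaves implicit.
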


\Remark
Let $\phi^{\pm}: \bigsqcup_d\mathbb{S}^1\rightarrow K^{\pm}$ be diffeomorphisms, then the pullbacks $(\phi^{\pm})^*$ give deffeomorphisms between half-density bundles
\begin{equation}
(\phi^{\pm})^*: C^{\infty}(K^{\pm};\Omega_{K^{\pm}}^{\frac12})\rightarrow C^{\infty}(\mathbb S^1;(\Omega_{\mathbb S^1}^{\frac12})^{d}).
\end{equation}
If we use $\sqrt{\mu^{\pm}}$ on $K^{\pm}$ and the standard half-density $\sqrt{dS}$ on $\mathbb S^1$ to trivialize the half-density bundles, then $(\phi^{\pm})^*$ give maps, which we still denote by $(\phi^{\pm})^*$, between smooth functions
\begin{equation}
\label{s1}
(\phi^{\pm})^*: C^{\infty}(K^{\pm};\CC)\rightarrow C^{\infty}(\mathbb S^1; \CC^{d}).
\end{equation}



We note that for any $(x,\xi)\in \mathcal{O}^-$,
\begin{equation}\begin{split}
\label{flow}
\text{\emph{there exists a unique}}~~~~ (z,t)\in K^-\times \RR  ~~~~\text{ \emph{such that}}~~~~ (x,\xi)=e^{tH_p}z.
\end{split}\end{equation}
Put
\begin{equation}
\label{irregularfactor}
W^-(x,\xi)=\int_0^{t}V^-(e^{sH_p}z)ds \in C^{\infty}(\mathcal{O}^-), \quad (x,\xi)\in \mathcal{O}^-.
\end{equation}
We have the following lemma:
\begin{lemm}
\label{integralfactor}
Let $W^-$ be the function defined by \eqref{irregularfactor}, $z=z(x,\xi)$ be defined by \eqref{flow}. Then
\begin{enumerate}
    \item In $\mathcal{O}^-$, the solutions to the transport equation \eqref{trans} with $b=0$ are
        \begin{equation}
        a=e^{iW^-}f(z), \quad f\in C^{\infty}(K^-).
        \end{equation}
    \item If $f\in C^{\infty}(K^-)$, $a_1\in C^{\infty}(\Lambda^-)$ and $a_1=e^{-iW^-}f(z)$ in $\mathcal{O}^-$, then $a_1\in S^0(\Lambda^-)$.
\end{enumerate}

\end{lemm}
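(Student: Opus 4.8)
The plan is to verify part (1) by a direct computation along the flow, and part (2) by a symbol estimate that tracks the homogeneity of $W^-$ and $f(z)$. For part (1), write the transport equation $(\tfrac{1}{i}H_p + V^-)a = 0$ as a linear ODE along the integral curves of $H_p$. Given $(x,\xi)\in\mathcal{O}^-$, use \eqref{flow} to write $(x,\xi)=e^{tH_p}z$ uniquely with $(z,t)\in K^-\times(-\infty,0)$, and set $g(t):=a(e^{tH_p}z)$. Then $H_p a = \tfrac{d}{dt}g$, so the equation becomes $\tfrac{1}{i}g'(t) + V^-(e^{tH_p}z)g(t)=0$, i.e. $g'(t) = i\,V^-(e^{tH_p}z)\,g(t)$. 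Solving this scalar ODE gives $g(t) = g(0)\exp\bigl(i\int_0^t V^-(e^{sH_p}z)\,ds\bigr) = f(z)\,e^{iW^-(x,\xi)}$ with $f(z):=g(0)=a|_{K^-}$, which is the claimed form; conversely any such $a$ manifestly solves \eqref{trans} with $b=0$. One should note $W^-$ is smooth on $\mathcal{O}^-$ because the map $(x,\xi)\mapsto(z,t)$ is a diffeomorphism onto $K^-\times(-\infty,0)$ (transversality, item (4) of Lemma \ref{transversal}) and $V^-$ is smooth.

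For part (2), I need to show that if $a_1\in C^\infty(\Lambda^-)$ agrees with $e^{-iW^-}f(z)$ on $\mathcal{O}^-$, then $a_1\in S^0(\Lambda^-)$, i.e. $M_t^{-0}M_t a_1 = M_t a_1$ is uniformly bounded in $C^\infty(\Lambda^-)$ for $t>1$, where $M_t$ is dilation in $\xi$. Since $\Lambda^-\setminus\mathcal{O}^-$ projects to a compact set (it is contained in $\{f_2\le 1\}$, which by homogeneity and $f_2\ge c|\xi|$ is bounded) and $a_1$ is smooth there, the estimate on the complement of a fixed conic neighborhood of $L^-$ is trivial; the content is near $L^-$, where $a_1 = e^{-iW^-}f(z)$. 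Here $z=z(x,\xi)$ is homogeneous of degree $0$ (it is the intersection of the ray through $(x,\xi)$ with $K^-$ transported by the flow, all of which is $\RR^+$-equivariant), so $f(z)$ is homogeneous of degree $0$ and lies in $S^0$. The factor $e^{-iW^-}$: one checks $W^-$ is homogeneous of degree $0$ — indeed $V^-$ is homogeneous of degree $-1$ and the flow $e^{sH_p}$ commutes with dilations, while the integration variable $s$ and the upper limit $t=t(x,\xi)$ need care, but $t$ itself is homogeneous of degree $0$ since $K^-=\{f_2=1\}$ and rescaling $\xi$ moves along the homogeneous Hamiltonian flow; combining these, $W^-$ is homogeneous of degree $0$ and smooth, hence $e^{-iW^-}\in S^0(\Lambda^-)$ with all derivatives bounded uniformly under $M_t$, $t>1$. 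The product of two $S^0$ symbols is $S^0$, giving $a_1\in S^0(\Lambda^-)$.

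The main obstacle I anticipate is the homogeneity and uniform-in-$t$ bookkeeping for $W^-$ in part (2): one must be careful that although $V^-$ decays like $|\xi|^{-1}$, the length of the integration interval in \eqref{irregularfactor} grows as one approaches $L^-$ (since the flow time to reach $K^-$ blows up at the radial sink), so the integral is not obviously bounded pointwise. The resolution is that this is exactly why the factor is called an "irregular factor" and why one keeps $e^{-iW^-}$ as an oscillatory (unit modulus) prefactor rather than trying to bound it: $W^-$ is real, smooth, and homogeneous of degree $0$ on $\mathcal{O}^-$, so $|e^{-iW^-}|=1$ and its $\xi$-derivatives gain the required decay from the degree-$0$ homogeneity, making $M_t(e^{-iW^-})$ bounded in $C^\infty$ for $t>1$; verifying the degree-$0$ homogeneity of $W^-$ rigorously (using $\RR^+$-equivariance of $e^{sH_p}$, of $z(\cdot)$, and of $t(\cdot)$) is the one computation that needs to be done carefully. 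A convenient way to see it is to change variables $s\mapsto$ (flow parameter) and observe that under $\xi\mapsto\tau\xi$ the Hamiltonian flow time rescales trivially because $H_p$ is homogeneous of degree $0$ (it is the rescaled field $|\xi|H_p$ that appears in the compactified picture), so the integrand $V^-(e^{sH_p}z)$ and the limit $t$ are both invariant.
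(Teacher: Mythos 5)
Your part (1) is essentially the paper's ``direct computation'' (an ODE along the flow lines), modulo a sign slip: from $\tfrac{1}{i}g'+V^-g=0$ one gets $g'=-iV^-g$ and hence $a=e^{-iW^-}f(z)$, not $e^{+iW^-}f(z)$; the statement itself carries this typo in item (1), while item (2), \eqref{transsol} and Lemma \ref{transsolutionlemma} all use $e^{-iW^-}$, so this is cosmetic. The genuine gap is in part (2). Your symbol estimate rests on the claims that $e^{sH_p}$ commutes with fiber dilations, that $t(x,\xi)$ and $z(x,\xi)$ are homogeneous of degree $0$, and hence that $W^-$ and $f(z)$ are homogeneous of degree $0$, so that $e^{-iW^-}f(z)\in S^0$ by homogeneity. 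All of these are false. Since $p$ is homogeneous of degree $0$, the field $H_p$ is homogeneous of degree $-1$, so dilating by $\tau$ rescales the flow time by $\tau$ (it is $|\xi|H_p$, not $H_p$, that is dilation-invariant); accordingly $|t|$ is comparable to $|\xi|$ on $\mathcal{O}^-$ (the paper uses exactly this fact right after \eqref{transsol}), and because the $H_p$-trajectories inside $\Lambda^-$ are not conic sets, $z(x,\xi)$ is not dilation-invariant either. In the normal-form coordinates (cf.\ \S\ref{nfams}, where the analogous computation is carried out for $\Lambda^+$) one has $z=x_2-\lambda^{-1}\ln\xi_1$ modulo constants, so $z(x,\tau\xi)=z(x,\xi)-\lambda^{-1}\ln\tau$, and $f(z)=e^{ikz}$ produces the factor $e^{ikx_2}\xi_1^{-ik/\lambda}$ appearing in \eqref{symbolrestriction} and \eqref{microlocalsolution}: unimodular, in $S^0$, but not homogeneous; likewise $W^-$ typically grows like a multiple of $\ln|\xi|$. (If $z$ and $W^-$ really were homogeneous of degree $0$, the conjugating operators $\mathbf{T}^{\pm}$ and the Lyapunov phases of Theorem \ref{theorem2} would be pointless.) Boundedness of $|e^{-iW^-}f(z)|$ gives only the zeroth-order bound; the derivative bounds required for $S^0(\Lambda^-)$ do not follow from your argument as written.

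The paper closes this gap without any homogeneity of $z$ or $W^-$: using $[\xi\partial_\xi,\tfrac{1}{i}H_p+V^-]=-(\tfrac{1}{i}H_p+V^-)$, each $(\xi\partial_\xi)^j a_1$ again solves the homogeneous transport equation on $\mathcal{O}^-$, hence by part (1) equals $e^{-iW^-}f_j(z)$ with $f_j\in C^\infty(K^-)$ and is therefore $O(1)$; since $H_p a_1$ is controlled by the equation itself and $H_p$, $\xi\partial_\xi$ form a frame on $\Lambda^-$, all frame derivatives of $a_1$ are bounded, which is the $S^0$ statement. If you prefer a direct route, you must estimate by hand the action of $\xi\partial_\xi$ and the angular derivatives on $W^-$ and $z$ and show it is bounded (it is, because both quantities diverge only logarithmically); that is exactly the bookkeeping the commutator identity does for free.
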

\begin{proof}
(1) can be checked by a direct computation.

For (2):
Use the fact that $[\xi\partial_{\xi},\frac{1}{i}H_p+V]=-(\frac{1}{i}H_p+V)$, we know
\begin{equation}
(\tfrac{1}{i}H_p+V)^{k}(\xi\partial_{\xi})^ja_1=0
\end{equation}
for any $k\geq 1,j\geq 0$ and $(x,\xi)\in \mathcal{O}^-$. Thus in $\mathcal{O}^-$
\begin{equation}
(\tfrac{1}{i}H_p+V)^k(\xi\partial_{\xi})^ja_1=e^{-iW^-}f_{jk}(z)=O(1)
\end{equation}
where $k,j\geq 0$ and $f_{jk}\in C^{\infty}$. Since $H_p$ and $\xi\partial_{\xi}$ form a frame on $\Lambda^-$, we have  $a_1\in S^0(\Lambda^-)$.
\end{proof}


We use $W^-$ as an integral factor to solve the transport equation.
The solution to the transport equation
\begin{equation}
            (\tfrac{1}{i}H_p+V^-)a=b
\end{equation}
is, for $(x,\xi)\in \mathcal{O}^-$ and $(z,t)\in K^-\times \RR$ defined by \eqref{flow},
\begin{equation}\begin{split}
\label{transsol}
a(x,\xi)
& =e^{-iW^-}\big(a(z)+i\int_0^{t} b(e^{sH_p}z)e^{iW^-(e^{sH_p}z)}ds \big)\\
& =e^{-iW^-}\big(a(z)+i\int_0^{-\infty} b(e^{sH_p}z)e^{iW^-(e^{sH_p}z)}ds \\
& \quad\quad +i\int_{-\infty}^{t}b(e^{sH_p}z)e^{iW^-(e^{sH_p}z)}ds\big).
\end{split}\end{equation}
This formula makes sense when $b\in S^{-2}(\Lambda^-)$ for then the integrand is of order $\langle \xi \rangle^{-2}$ and the fact that $|t|$ is comparable to $|\xi|$ in $\mathcal{O}^-$.

From \eqref{transsol} we know
\begin{lemm}
\label{transsolutionlemma}

Suppose $a_{-j}\in S^{-j}(\Lambda^-)$, $j\geq 0$, $b_{-2}\in S^{-2}(\Lambda^-)$, $c_{-k}\in S^{-k}(\Lambda^-)$, $k\geq 2$ satisfy the following system of equations
\begin{equation}
\label{firsttrans}
(\tfrac{1}{i}H_p+V^-)a_0=b_{-2};
\end{equation}
\begin{equation}
\label{jthtrans}
(\tfrac{1}{i}H_p+V^-)a_{-j}=-c_{-j-1}, \quad j\geq 1.
\end{equation}
Then for $(x,\xi)\in \mathcal{O}^-$ and $(z,t)\in K^-\times \RR$ defined by \eqref{flow},
\begin{enumerate}
    \item There exists a unique function $f\in C^{\infty}(K^-)$ such that
        \begin{equation}
        a_0=e^{-iW^-}(f(z)+O( |\xi|^{-1})), \quad |\xi|\rightarrow \infty.
        \end{equation}
        Moreover, $f$ depends only on the $0$th order part of $a_0$. That means if $\tilde{a}_0\in S^{0}$ satisfies $a_0-\tilde{a}_0\in S^{-1}$ and solves
        \begin{equation}
        (\tfrac{1}{i}H_p+V^-)\tilde{a}_0=\tilde{b}_{-2}
        \end{equation}
        for some $\tilde{b}_{-2}\in S^{-2}$ and
        \begin{equation}
        \tilde{a}_0=e^{-iW^-}(\tilde{f}(z)+O(|\xi|^{-1})),\quad |\xi|\rightarrow \infty.
        \end{equation}
        Then $f\equiv \tilde{f}$.
    \item The equations \eqref{jthtrans} has solutions
        \begin{equation}
            a_{-j}= -ie^{-iW^-}\int_{-\infty}^{t}
            e^{iW^-(e^{sH_p}z)}c_{-j-1}(e^{sH_p}z)ds,\quad j\geq 1.
        \end{equation}
\end{enumerate}
\end{lemm}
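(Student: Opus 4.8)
The plan is to read both statements off the explicit solution formula \eqref{transsol}, the only analytic input being the two facts recalled just after it: along a trajectory $s\mapsto e^{sH_p}z$ in $\mathcal{O}^-$ running out to the radial source $L^-$ (so $|\xi|\to\infty$) one has $|s|$ comparable to $|\xi|$, hence an element of $S^{-k}(\Lambda^-)$ restricted to such a trajectory is $O(|s|^{-k})$, and for $k\ge 2$ this is integrable down to $-\infty$ with an $O(|t|^{-(k-1)})$ tail. Also $W^-$ and each $W^-(e^{sH_p}z)$ are real (since $V^-$ is real-valued), so the exponential factors in \eqref{transsol} have modulus $1$.

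For (1), apply \eqref{transsol} with $b=b_{-2}\in S^{-2}$ and initial data $a_0|_{K^-}$, using the three-term splitting already displayed there. The middle term $i\int_0^{-\infty}b_{-2}(e^{sH_p}z)e^{iW^-(e^{sH_p}z)}\,ds$ converges and is a smooth function of $z$ alone, so
\[
 f(z):=a_0(z)+i\int_0^{-\infty}b_{-2}(e^{sH_p}z)e^{iW^-(e^{sH_p}z)}\,ds\in C^\infty(K^-),
\]
while the remaining term has modulus $\le C\int_{-\infty}^t|s|^{-2}\,ds=C|t|^{-1}=O(|\xi|^{-1})$; this gives $a_0=e^{-iW^-}(f(z)+O(|\xi|^{-1}))$ on $\mathcal{O}^-$. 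For uniqueness, if two such $f,\tilde f$ work then $|e^{-iW^-}|=1$ forces $f(z)-\tilde f(z)=O(|\xi|^{-1})$; but along any trajectory $t\mapsto e^{tH_p}z_0$ with $z_0\in K^-$ the quantity $f(z)-\tilde f(z)=f(z_0)-\tilde f(z_0)$ is constant while $|\xi|\to\infty$ as $t\to-\infty$, so it vanishes. The ``moreover'' is the same computation: if $\tilde a_0\in S^0$ has the asymptotic form with datum $\tilde f$ and $a_0-\tilde a_0\in S^{-1}$, then $e^{-iW^-}(f(z)-\tilde f(z))=(a_0-\tilde a_0)+O(|\xi|^{-1})=O(|\xi|^{-1})$ and the same trajectory argument gives $f\equiv\tilde f$.

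For (2), note $c_{-j-1}\in S^{-j-1}\subset S^{-2}$ since $j\ge 1$, so \eqref{transsol} applies. I would verify directly that
\[
 a_{-j}^{\sharp}:=-ie^{-iW^-}\int_{-\infty}^t e^{iW^-(e^{sH_p}z)}c_{-j-1}(e^{sH_p}z)\,ds
\]
solves \eqref{jthtrans}: since $(\tfrac{1}{i}H_p+V^-)e^{-iW^-}=0$ by Lemma \ref{integralfactor}(1) and $H_p$ acts as $\partial_t$ along trajectories, Leibniz gives $(\tfrac{1}{i}H_p+V^-)a_{-j}^{\sharp}=e^{-iW^-}\cdot\tfrac{1}{i}H_p\big({-i}\int_{-\infty}^t e^{iW^-(e^{sH_p}z)}c_{-j-1}(e^{sH_p}z)\,ds\big)=-c_{-j-1}$. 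That $a_{-j}^{\sharp}\in S^{-j}(\Lambda^-)$ follows from $|a_{-j}^{\sharp}|\le C\int_{-\infty}^t|s|^{-j-1}\,ds=O(|\xi|^{-j})$ together with the commutator identity $[\xi\partial_\xi,\tfrac{1}{i}H_p+V^-]=-(\tfrac{1}{i}H_p+V^-)$, exactly as in the proof of Lemma \ref{integralfactor}(2). Finally $a_{-j}-a_{-j}^{\sharp}$ solves the homogeneous transport equation, so by Lemma \ref{integralfactor}(1) it equals $e^{-iW^-}g(z)$ for some $g\in C^\infty(K^-)$ on $\mathcal{O}^-$; since $|e^{-iW^-}g(z)|=|g(z)|$ is constant along trajectories while $|a_{-j}-a_{-j}^{\sharp}|=O(|\xi|^{-j})\to0$ toward $L^-$, we get $g\equiv0$, i.e.\ $a_{-j}=a_{-j}^{\sharp}$ on $\mathcal{O}^-$.

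The Leibniz and commutator computations are routine; the one point that needs care is the uniform control of the tail integrals, which rests entirely on the comparability $|t|\sim|\xi|$ on $\mathcal{O}^-$ and on the fact that the trajectories flow out to $L^-$ where $|\xi|\to\infty$, so that an $S^{-k}$-symbol is genuinely $O(|s|^{-k})$ along them and, for $k\ge2$, contributes an $O(|t|^{-(k-1)})$ remainder. I expect this bookkeeping, rather than any conceptual difficulty, to be the main thing to get right.
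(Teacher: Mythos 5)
Your proposal is correct and takes essentially the same route as the paper: the paper also defines $f(z)=a_0(z)+i\int_0^{-\infty}b_{-2}(e^{sH_p}z)e^{iW^-(e^{sH_p}z)}ds$ and controls the tail $\int_{-\infty}^{t}$ by $b_{-2}\in S^{-2}(\Lambda^-)$ together with the comparability of $|t|$ and $|\xi|$ on $\mathcal{O}^-$, treating (2) as immediate from \eqref{transsol}. The additional details you supply -- the constancy-along-trajectories argument for uniqueness of $f$ and the ``moreover'' clause, and the direct verification that the formula in (2) solves \eqref{jthtrans} with the expected symbol bounds -- are exactly the points the paper leaves implicit, and your arguments for them are sound.
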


\begin{proof}
We only need to prove (1).

We can put
\begin{equation}\begin{split}
f(z)=a_0(z)+i\int_{0}^{-\infty}b_{-2}(e^{sH_p}z)e^{iW^-(e^{sH_p}z)}ds
\end{split}\end{equation}
and note that
\begin{equation}
\int_{-\infty}^{t}b_{-2}(e^{sH_p}z)e^{iW^-(e^{sH_p}z)}ds=O(|\xi|^{-1}), \quad |\xi|\rightarrow \infty
\end{equation}
since $b_{-2}\in S^{-2}(\Lambda^-)$ and $t$ is comparable to $|\xi|$ in $\mathcal{O}^-$.
\end{proof}

\section{solutions up to smooth functions}
\label{formalsolution}

In this section we will construct a correspondence between a set of distributions $D^-:=\{ u\in I^0(\Lambda^-): Pu\in C^{\infty}(M) \}$ and $C^{\infty}(K^-)$.

From now on we fix a family of open conic sets $\{\mathcal{U}_j\}_{j=1}^{m}$ that cover $\Lambda^-$ and fix some local coordinates $(x,\xi)$ such that $\Lambda^-\cap\mathcal{U}_j=\{(x,\xi): x=\tfrac{\partial F_j}{\partial \xi}, \xi\in \Gamma_j\}$ for some $F_j$ that is homogeneous of order $1$ and some open conic set $\Gamma_j\subset \RR^2\setminus 0$. Let $\varphi_j(x,\xi)=\langle x,\xi \rangle-F_j(\xi)$ be a local generating function of $\Lambda^-$.

We first record that
\begin{lemm}
\label{remainder}
If $D^-=\{ u\in I^0(\Lambda^-): Pu\in C^{\infty}(M) \}$, then
\begin{equation}
D^-\cap I^{-1}(\Lambda^-)= C^{\infty}(M).
\end{equation}
\end{lemm}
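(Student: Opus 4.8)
The plan is to prove the two inclusions separately. The inclusion $C^\infty(M)\subset D^-\cap I^{-1}(\Lambda^-)$ is immediate: smooth functions have empty wavefront set, hence lie in $I^{-1}(\Lambda^-)$ (condition (1) is vacuous, condition (2) holds with $a\equiv 0$ and $r=u$), and $P$ maps $C^\infty(M)$ to $C^\infty(M)$. So the content is the reverse inclusion $D^-\cap I^{-1}(\Lambda^-)\subset C^\infty(M)$.

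For the reverse inclusion, take $u\in I^{-1}(\Lambda^-)$ with $Pu\in C^\infty(M)$. The idea is to run the transport equation iteratively to gain regularity and show $\WF(u)=\emptyset$, then invoke Lemma \ref{emptywf}. First I would observe that since $p$ vanishes on $\Lambda^-$, equation \eqref{transport} gives $Pu\in I^{-2}(\Lambda^-)$ with principal symbol obtained from the transport operator $(\tfrac1i H_p+V^-)$ applied to $\sigma(u)$; but $Pu\in C^\infty(M)$ means $\sigma(Pu)=0$ as an element of the symbol quotient, so the principal symbol $a_{-1}\in S^{-1}(\Lambda^-)/S^{-2}(\Lambda^-)$ of $u$ satisfies the homogeneous transport equation $(\tfrac1i H_p+V^-)a_{-1}=0$ modulo $S^{-3}$. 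By Lemma \ref{integralfactor}(1) (applied at the appropriate homogeneity, or equivalently by the uniqueness part of Lemma \ref{transsolutionlemma} applied after multiplying by a power of $\langle\xi\rangle$), the solution in $\mathcal{O}^-$ is $a_{-1}=e^{-iW^-}f(z)$ for some $f\in C^\infty(K^-)$ homogeneous of the correct degree; but a symbol in $S^{-1}(\Lambda^-)$ that is, along each flow line, asymptotically constant times $e^{-iW^-}$ as $|\xi|\to\infty$ must have that constant equal to zero — because $S^{-1}$ symbols decay. Hence $a_{-1}\in S^{-2}(\Lambda^-)$, i.e. $u\in I^{-2}(\Lambda^-)$.

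Now I would iterate: having shown $u\in I^{-N}(\Lambda^-)$, the same argument — $Pu\in C^\infty$ forces the top symbol to solve the homogeneous transport equation, and decay of $S^{-N}$ symbols forces the flow-invariant ``initial data'' on $K^-$ to vanish — upgrades $u$ to $I^{-N-1}(\Lambda^-)$. After infinitely many steps (organized as: for every $N$, $u\in I^{-N}(\Lambda^-)$, hence $\WF^{-N+n/4}(u)=\emptyset$ for all $N$, so $\WF(u)=\emptyset$), Lemma \ref{emptywf} gives $u\in C^\infty(M)$. The one point that needs care, and which I expect to be the main obstacle, is justifying that the ``initial data'' map $a\mapsto f$ is well-defined and that vanishing of $f$ genuinely pushes $a$ down one symbol order globally on $\Lambda^-$ (not just in $\mathcal{O}^-$): near the radial source $L^-$ one uses that $\iota(\mathcal{O}^-)$ is a neighborhood of $L^-$ together with the propagation of the symbol along $H_p$ from $K^-$, and away from $L^-$ the symbol is supported where $|\xi|$ is bounded on compact sets of the base so order counting is automatic; one also needs the solvability formula \eqref{transsol} to make sense, which requires the relevant remainders to sit in $S^{-2}(\Lambda^-)$, and this is exactly where the comparability of $|t|$ and $|\xi|$ in $\mathcal{O}^-$ from Lemma \ref{transversal} is used. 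Handling the homogeneity bookkeeping cleanly — passing between $S^{-N}(\Lambda^-)$ and $S^0(\Lambda^-)$ by multiplication by $\langle\xi\rangle^{N}$ so that Lemma \ref{transsolutionlemma} applies verbatim — is the routine-but-fussy part I would spell out carefully.
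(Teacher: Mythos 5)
Your argument is essentially correct, but it takes a genuinely different --- and much longer --- route than the paper. The paper's proof is three lines: since $\dim M=2$, $u\in I^{-1}(\Lambda^-)\subset L^{2}(M)$; self-adjointness of $P$ then gives $\Im\langle Pu,u\rangle=0$; and \cite[Lemma 3.1]{force} (a radial-estimate statement: above-threshold regularity at the radial set, $\WF(u)\subset\Lambda^-$, and this sign condition) immediately yields $u\in C^{\infty}(M)$. You instead rerun the symbolic hierarchy: at each order the leading symbol solves the transport equation with right-hand side two orders down; because $V^-$ is real the integrating factor $e^{-iW^-}$ is unimodular, so the decay of an $S^{-N}(\Lambda^-)$ symbol ($N\ge 1$) along the flow toward $L^-$ forces the flow-invariant data on $K^-$ to vanish; the solution formula \eqref{transsol} together with the frame/commutator argument in the proof of Lemma \ref{integralfactor}(2) upgrades the pointwise gain to a genuine drop of one symbol order (and, as you note, only $\mathcal{O}^-$ matters since the rest of $\Lambda^-$ has bounded $|\xi|$); iterating gives $u\in I^{-N}(\Lambda^-)$ for every $N$, hence $u\in C^{\infty}(M)$. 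Your route is self-contained at the symbol level and in fact shows slightly more (vanishing of the symbol to all orders), at the price of the bookkeeping you flag; the paper's route buys brevity by reusing the radial-estimate machinery already in place in \S\ref{lap}. One correction to your bookkeeping remark: multiplying by $\langle\xi\rangle^{N}$ does not reduce matters to Lemma \ref{transsolutionlemma} verbatim, since conjugation replaces $V^-$ by $V^--\tfrac{N}{i}\langle\xi\rangle^{-1}H_p\langle\xi\rangle$, which is no longer real and destroys unimodularity of the integrating factor; it is cleaner to apply \eqref{transsol} directly at order $-N$, where convergence only requires the right-hand side to lie in $S^{-2}(\Lambda^-)$ or lower together with $|t|\sim|\xi|$ in $\mathcal{O}^-$.
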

\begin{proof}
Suppose $u\in D^-$, then $Pu\in C^{\infty}(M)$ and $\WF(u)\subset \Lambda^-$. Since $u\in I^{-1}(\Lambda)\subset L^2(M)$, and $P$ is self-adjoint, we find that $\Im\langle Pu, u\rangle=0$. By \cite[Lemma 3.1]{force}, we conclude that $u\in C^{\infty}(M)$.
\end{proof}

In the next lemma, we construct microlocal solutions to \eqref{eq: equation}, that is, $u\in I^0(\Lambda^-)$ satisfying $Pu\in C^{\infty}(M)$. We build the connection between the ``initial data'' and the microlocal solutions as mentioned in the Introduction.

\begin{lemm}
\label{GH}
There exist linear maps
\begin{equation}\begin{split}
G^-: D^-/C^{\infty}(M)\rightarrow C^{\infty}(K^-), \\
H^-: C^{\infty}(K^-)\rightarrow D^-/C^{\infty}(M),
\end{split}\end{equation}
such that
\begin{equation}
    G^-\circ H^-=\Id_{C^{\infty}(K^-)}, \quad H^-\circ G^-=\Id_{D^-/C^{\infty}(M)}.
\end{equation}
\end{lemm}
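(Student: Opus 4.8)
The construction of $H^-$ and $G^-$ should go through the solution of the transport equation hierarchy set up in \S\ref{te}. Given $f\in C^\infty(K^-)$, the plan is to build a distribution $u\in I^0(\Lambda^-)$ whose principal symbol, written in the trivialization $\sigma(u)=e^{\frac{\pi i}{4}\sgn\varphi''}a(x,\xi)\sqrt{\nu^-}$, solves the leading transport equation $(\tfrac1i H_p+V^-)a_0=0$ with the prescribed ``initial data'' $f$ on $K^-$ via Lemma \ref{integralfactor}(1) (so $a_0=e^{-iW^-}f(z)$ in $\mathcal{O}^-$, extended by Lemma \ref{integralfactor}(2) to lie in $S^0(\Lambda^-)$). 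Then one iterates: if $u^{(N)}$ is the partial sum built from symbols $a_0,a_{-1},\dots,a_{-N}$ so that $Pu^{(N)}\in I^{-N-1}(\Lambda^-)$, write $\sigma(Pu^{(N)})$ in the form $e^{\frac{\pi i}{4}\sgn\varphi''}c_{-N-1}\sqrt{\nu^-}$ with $c_{-N-1}\in S^{-N-1}$, and solve $(\tfrac1i H_p+V^-)a_{-N-1}=-c_{-N-1}$ using Lemma \ref{transsolutionlemma}(2). Taking $u\sim\sum_{j\ge 0}(\text{contribution of }a_{-j})$ as an asymptotic sum of Lagrangian distributions of decreasing order gives $u$ with $Pu\in\bigcap_N I^{-N}(\Lambda^-)=C^\infty(M)$ (using $\WF(Pu)\subset\Lambda^-$ and Lemma \ref{emptywf}). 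This defines $H^-(f)$ as the class of $u$ in $D^-/C^\infty(M)$; linearity is clear from linearity of every step.

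For $G^-$, given $u\in D^-$, the plan is to extract its principal symbol $\sigma(u)$, which lies in $S^0(\Lambda^-)/S^{-1}(\Lambda^-)$; since $Pu\in C^\infty(M)\subset I^{-1}(\Lambda^-)$, the transport equation \eqref{transport}--\eqref{trans} forces $(\tfrac1i H_p+V^-)a_0\in S^{-2}(\Lambda^-)$, so Lemma \ref{transsolutionlemma}(1) applies and produces a unique $f\in C^\infty(K^-)$ with $a_0=e^{-iW^-}(f(z)+O(|\xi|^{-1}))$ in $\mathcal{O}^-$. Set $G^-(u):=f$. The content of Lemma \ref{transsolutionlemma}(1) — that $f$ depends only on the $0$th order part of $a_0$, i.e.\ only on the class of $\sigma(u)$ mod $S^{-1}$ — is precisely what makes $G^-$ well-defined on $D^-/C^\infty(M)$: if $u\in C^\infty(M)$ then $\sigma(u)=0$, so $f=0$.

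For the two identities: $G^-\circ H^-=\Id$ is immediate because $H^-(f)$ was constructed to have leading symbol with initial data exactly $f$, and the uniqueness clause of Lemma \ref{transsolutionlemma}(1) guarantees $G^-$ reads off that same $f$. For $H^-\circ G^-=\Id_{D^-/C^\infty(M)}$, the plan is: given $u\in D^-$ with $G^-(u)=f$, the distribution $u-H^-(f)$ lies in $D^-$ and has vanishing principal symbol $a_0$ in the $0$th order part — i.e.\ $u-H^-(f)\in I^{-1}(\Lambda^-)$ (more precisely its symbol is $O(|\xi|^{-1})$ everywhere, using that both $a_0$ and the leading symbol of $H^-(f)$ agree mod $S^{-1}$ globally on $\Lambda^-$, not just on $\mathcal O^-$, by the $S^0$-extension in Lemma \ref{integralfactor}(2) and Lemma \ref{transsolutionlemma}(1)). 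Then $u-H^-(f)\in D^-\cap I^{-1}(\Lambda^-)=C^\infty(M)$ by Lemma \ref{remainder}, so $u$ and $H^-(f)$ represent the same class.

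**Main obstacle.** The delicate point is controlling the symbol of $H^-(f)$ \emph{globally} on $\Lambda^-$ rather than only on the attracting neighborhood $\mathcal{O}^-$ where the flow coordinates $(z,t)$ are available: the formula $a_0=e^{-iW^-}f(z)$ lives on $\mathcal O^-$, and one must patch it to an element of $S^0(\Lambda^-)$ with the right wavefront/regularity away from $\mathcal O^-$ while keeping $Pu$ smooth there — this is exactly where the homogeneity argument of Lemma \ref{integralfactor}(2) and the flow-invariance of $\nu^-$ (Lemma \ref{transversal}(6)) do the work, and where one must check that the asymptotic summation does not reintroduce singularities outside $\Lambda^-$. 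Verifying that the iterative solution of \eqref{jthtrans} stays in the correct symbol classes $S^{-j}(\Lambda^-)$ uniformly, so that the asymptotic sum converges in the Lagrangian sense, is the remaining bookkeeping.
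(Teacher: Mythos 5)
Your proposal follows essentially the same route as the paper: $H^-$ is built by solving the transport hierarchy with initial data $f$ (leading symbol $e^{-iW^-}f(z)$ cut off to lie in $S^0(\Lambda^-)$, then iterating Lemma \ref{transsolutionlemma}(2) and taking an asymptotic sum), $G^-$ is read off from the leading behavior of the principal symbol via the uniqueness clause of Lemma \ref{transsolutionlemma}(1), and both identities reduce, as you say, to $D^-\cap I^{-1}(\Lambda^-)=C^\infty(M)$ (Lemma \ref{remainder}).

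The one point where your write-up is too quick is the claim that linearity of $H^-$ ``is clear from linearity of every step.'' It is not: the asymptotic (Borel-type) summation $a\sim a_0+a_{-1}+\cdots$ involves non-canonical choices, so the map $f\mapsto a$, and hence $f\mapsto u$ at the level of distributions, is not linear. The paper addresses this explicitly: given $g_1,g_2$ and constants $c_1,c_2$, the distribution $w:=H^-(c_1g_1+c_2g_2)-(c_1H^-(g_1)+c_2H^-(g_2))$ has vanishing principal symbol, so $w\in I^{-1}(\Lambda^-)$ with $Pw\in C^\infty(M)$, and Lemma \ref{remainder} gives $w\in C^\infty(M)$, i.e.\ $w=0$ in the quotient. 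This is exactly the argument you already deploy for $H^-\circ G^-=\Id$, so the gap is easily filled, but as stated your linearity claim is unjustified (and the paper's remark after the lemma emphasizes that the nonlinearity is only ``killed'' by quotienting by $C^\infty(M)$).
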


\begin{proof}
We first construct $G^-$ and $H^-$. The linearity and invertibility of $G^-$ and $H^-$ can be checked from the construction.

\noindent
\textbf{Construction of $G^-$.}
Let $u\in I^0(\Lambda^-)$ be a representative of $[u]\in D^-/C^{\infty}(M)$. The principal symbol of $u$ can be written as
\begin{equation}
\label{symbol}
\sigma(u)=e^{\frac{\pi i}{4}\sgn{\varphi_j^{\prime\prime}}}a_0\sqrt{\nu^-}
\end{equation}
in $\Lambda^-\cap\mathcal{U}_j$ with $a_0\in S^0(\Lambda^-)$. Since $Pu\in C^{\infty}$ we know that $\sigma_{-1}(Pu)=0$, that is,
\begin{equation}
(\tfrac{1}{i}H_p+V^-)a_0=b_{-2}
\end{equation}
for some $b_{-2}\in S^{-2}(\Lambda)$.
By Lemma \ref{transsolutionlemma}, we know that there exists a unique $f\in C^{\infty}(K^-)$ such that for $(x,\xi)\in \mathcal{O}^-$ and $(z,t)\in K^-\times \RR$ defined by \eqref{flow},
\begin{equation}
a_0=e^{-iW^-}(f(z)+O(| \xi |^{-1})), \quad |\xi|\rightarrow \infty.
\end{equation}
Furthermore, by Lemma \ref{transsolutionlemma}, $f$ does not depend on the choice of the representative of the principal symbol of $u$. The function $f$ does not depend on the choice of the representative of $[u]$ as well since elements in $[u]$ differ only by smooth functions on $M$. Thus we get a map
\begin{equation}\begin{split}
\label{G}
G^-: D^-/C^{\infty}(M) & \rightarrow C^{\infty}(K^-), \quad
[u]\mapsto f.
\end{split}\end{equation}
From the construction we can check that $G^-$ is linear.

\noindent
\textbf{Construction of $H^-$.}
For any $f\in C^{\infty}(K^-)$, put
\begin{equation}
\label{principal}
a_0=e^{-iW^-}f(z)
\end{equation}
for $(x,\xi)\in \mathcal{O}^-$, and $(z,t)\in K^-\times \RR$ defined by \eqref{flow}. Let $\chi\in C^{\infty}((0,\infty);[0,1])$ be a cut-off function such that $\chi=0$ on $(0,1]$ and $\chi=1$ on $[2,\infty)$. Then the function $\chi(f_2)a_0\in S^0(\Lambda^-)$. Let $u_0$ be a distribution in $I^0(\Lambda^-)$ with principal symbol
\begin{equation}
\sigma(u_0)=e^{\frac{\pi i}{4}\sgn{\varphi_j^{\prime\prime}}}\chi(f_2) a_0\sqrt{\nu^-}.
\end{equation}
in $\Lambda^-\cap\mathcal{U}_j$. By the Lemma \ref{integralfactor} we know that
\begin{equation}
\tfrac{1}{i}L\sigma(u_0) \in S^{-3/2}(\Lambda^-; \mathcal{M}_{\Lambda^-}\otimes \Omega_{\Lambda^-}^{\frac12})
\end{equation}
and this implies that $\sigma_{-1}(Pu_0)=0$, that is, $Pu_0\in I^{-2}(\Lambda^-)$. Suppose
\begin{equation}
\sigma_{-2}(Pu_0)=e^{\frac{\pi i}{4}\sgn{\varphi_j^{\prime\prime}}} c_{-2}\sqrt{\nu^-},
\end{equation}
then by Lemma \ref{transsolutionlemma}, we can find $a_{-1}\in C^{\infty}(\mathcal{O}^-)$ such that $\chi(f_2)a_{-1}\in S^{-1}(\Lambda^-)$ and \begin{equation}
(\tfrac{1}{i}H_p+V^-)(a_{-1})=-c_{-2},
\end{equation}
in $\mathcal{O}^-\cap\{f_2>2\}$.
Let $u_{-1}$ be in $I^{-1}(\Lambda^-)$ with
\begin{equation}
\sigma_{-1}(u_{-1})=e^{\frac{\pi i}{4}\sgn{\varphi_j^{\prime\prime}}}\chi(f_2)a_{-1}\sqrt{\nu^-}.
\end{equation}
Then $\sigma_{-2}(P(u_0+u_{-1}))=0$, that is, $P(u_0+u_{-1})\in I^{-3}(\Lambda^-)$.

Continue this procedure and we get a symbol sequence $\{\chi(f_2)a_{-j}\}_{j=0}^{\infty}$ such that $\chi(f_2)a_{-j}\in S^{-j}(\Lambda^-)$, $j=0,1,\cdots$. By \cite[Proposition 1.8]{micro}, there exists $a\in S^{0}(\Lambda^-)$ such that
\begin{equation}
\label{borel}
a\sim a_0+a_{-1}+a_{-2}+\cdots.
\end{equation}
Now we have
\begin{equation}
(\tfrac{1}{i}H_p+V^-)a\in S^{-\infty}(\Lambda^-), \quad a=e^{-iW^-}(f(z)+O(|\xi|^{-1})).
\end{equation}

Let $u$ be a distribution defined by \eqref{locallag} in $\Lambda^-\cap\mathcal{U}_j$ for any $j$, then $u\in I^0(\Lambda^-)$ and $Pu\in C^{\infty}(M)$, that is, $u\in D^-$. Let $[u]$ be the equivalent class of $u$ in $D^-/C^{\infty}(M)$. Now we get a map
\begin{equation}\begin{split}
\label{H}
H^-: C^{\infty}(K^-)  \rightarrow D^-/C^{\infty}(M), \quad
f \mapsto [u].
\end{split}\end{equation}

We now show that $H^-$ is linear. In fact, let $g_1, g_2\in C^{\infty}(K^-)$, $c_1,c_2\in \CC$. Then from \eqref{principal} we know
\begin{equation}
\label{hlinear1}
\sigma(H^-(c_1g_1+c_2g_2))=\sigma(c_1H^-(g_1)+ c_2H^-(g_2)).
\end{equation}
Put
\begin{equation}
\label{hlinear2}
w:=H^-(c_1g_1+c_2g_2)-(c_1H^-(g_1)+c_2H^-(g_2)).
\end{equation}
Here $H^{-}(\cdot)$ should be understand as arbitrary representatives in the equivalence class.
Then $w\in I^{-1}(\Lambda^-)$, $Pw\in C^{\infty}(M)$.
Thus by Lemma \ref{remainder} we find $w\in D^-\cap I^{-1}(\Lambda^-)=C^{\infty}(M)$, i.e., $w=0$ in $D^-/C^{\infty}(M)$.

The identities in the lemma are clear from the construction of $G^-$ and $H^-$.
\end{proof}

\Remarks
1. For any $f\in C^{\infty}(K^-)$, $H^-(f)$ is a microlocal solution of \eqref{eq: equation}.

\noindent
2. In the construction (which is similar to Borel's Lemma -- see \cite[Theorem 1.2.6]{horI}) of $a$ in \eqref{borel}, the map from $f$ to $a$ is nonlinear. Hence it is not obvious that $H$ is in fact linear. However, the nonlinearity -- which is caused by the lower order terms in the asymptotic expansion of $a$ -- is ``killed'' by taking the quotient space of $D^-$ with respect to $C^{\infty}(M)$, which is $D^-\cap I^{-1}(\Lambda^-)$ by Lemma \ref{remainder}.

\noindent
3. We can define $D^+$, $G^+$, $H^+$ in a similar way.

\noindent
4. Using the maps $(\phi^{\pm})^*$ constructed in the remark below Lemma \ref{transversal}, we can then identify microlocal solutions with smooth functions on circles. We define
\begin{equation}\begin{split}
\label{G0H0}
& G^{\pm}_0:=  (\phi^{\pm})^*\circ G^{\pm}:  D^{\pm}/C^{\infty}(M)\rightarrow C^{\infty}(\mathbb S^1; \CC^{d}), \\
& H_0^{\pm}:=  H^{\pm}\circ ((\phi^{\pm})^*)^{-1}: C^{\infty}(\mathbb S^1; \CC^{d})\rightarrow D^{\pm}/C^{\infty}(M).
\end{split}\end{equation}
By the definitions, $G^{\pm}_0$ and $H^{\pm}_0$ are linear and
\begin{equation}
G^{\pm}_0\circ H^{\pm}_0=\Id, \quad H^{\pm}_0\circ G^{\pm}_0=\Id.
\end{equation}


\section{boundary pairing formula}
\label{boundary}

In this section, we prove a boundary pairing formula for microlocal solutions to \eqref{eq: equation}.
For that, let $\langle\cdot,\cdot\rangle$ be the pairing of distributions and smooth functions with $L^2$ convention, i.e., $\langle u,v \rangle=\int u\overline{v}dm$ if $u,v\in C^{\infty}(M)$. Here $dm$ is a smooth density on $M$ such that $P$ is self-adjoint (see \S \ref{assumption}). We consider microlocal solutions to \eqref{eq: equation}:
\begin{equation}
\label{microsol}
    Pu_j\in C^{\infty}(M), \quad u_j=u_j^-+u_j^+, \quad u_j^{\pm}\in I^0(\Lambda^{\pm}), \quad j=1,2.
\end{equation}
Put
\begin{equation}
\label{B}
\mathcal{B}(u_1,u_2):=\langle Pu_1,u_2 \rangle-\langle u_1, Pu_2 \rangle.
\end{equation}
Our goal is to compute $\mathcal{B}$ using $G^{\pm}$ constructed in Lemma \ref{GH}.

We first clarify the assumption \eqref{microsol} and the definition of $\mathcal{B}$.
\begin{lemm}
\label{bdefi}
Suppose $u_j\in \mathscr{D}^{\prime}(M)$, $j=1,2$, satisfy \eqref{microsol}. Then
\begin{enumerate}
    \item The decomposition of $u_j=u_j^-+u_j^+$, $u_j^{\pm}$ is unique up to $C^{\infty}(M)$;
    \item In fact we have $Pu_j^{\pm}\in C^{\infty}(M)$;
    \item If $u_1$ or $u_2$ is smooth, then $\mathcal{B}(u_1,u_2)=0$.
\end{enumerate}
\end{lemm}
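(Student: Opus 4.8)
The entire lemma rests on one geometric fact: since $L^+$ (the attractive limit cycles) and $L^-$ (the repulsive ones) are disjoint closed curves in $\Sigma$, and $\Lambda^\pm=\kappa^{-1}(L^\pm)$, we have $\Lambda^+\cap\Lambda^-=\emptyset$ in $T^*M\setminus 0$. Granting this, I would prove (1) as follows: if $u=v^-+v^+=w^-+w^+$ are two decompositions with $v^\pm,w^\pm\in I^0(\Lambda^\pm)$, set $g:=v^--w^-=w^+-v^+$. Then $\WF(g)\subset\WF(v^-)\cup\WF(w^-)\subset\Lambda^-$ from the first expression and $\WF(g)\subset\Lambda^+$ from the second, so $\WF(g)\subset\Lambda^+\cap\Lambda^-=\emptyset$; hence $g\in C^\infty(M)$ by Lemma~\ref{emptywf}, and therefore both $v^\pm-w^\pm$ lie in $C^\infty(M)$.

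For (2), the plan is to use that $\WF(Pv)\subset\WF(v)$ for $P\in\Psi^0(M)$, so $\WF(Pu_j^\pm)\subset\WF(u_j^\pm)\subset\Lambda^\pm$. Writing $Pu_j^-=Pu_j-Pu_j^+$ and using the hypothesis $Pu_j\in C^\infty(M)$ gives $\WF(Pu_j^-)\subset\WF(Pu_j^+)\subset\Lambda^+$; combined with $\WF(Pu_j^-)\subset\Lambda^-$ and $\Lambda^+\cap\Lambda^-=\emptyset$ this forces $\WF(Pu_j^-)=\emptyset$, so $Pu_j^-\in C^\infty(M)$ by Lemma~\ref{emptywf}, and the symmetric argument gives $Pu_j^+\in C^\infty(M)$. (One could equally invoke the mapping property $P\colon I^0(\Lambda^\pm)\to I^{-1}(\Lambda^\pm)$ from \eqref{transport}, which applies since $\sigma(P)$ vanishes on $\Lambda^\pm\subset p^{-1}(0)$.) Together with (1) this also makes the assertion ``$Pu_j^\pm\in C^\infty(M)$'' independent of the chosen decomposition.

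For (3), I would first note that \eqref{microsol} is exactly what makes the two terms in \eqref{B} meaningful: $Pu_1\in C^\infty(M)$ pairs against the distribution $u_2$, and the distribution $u_1$ pairs against $Pu_2\in C^\infty(M)$, so $\mathcal{B}(u_1,u_2)$ is well defined. If $u_2\in C^\infty(M)$, then $u_2$ may be used as a test function and the definition of the action of the self-adjoint $P$ on the distribution $u_1$ reads $\langle Pu_1,u_2\rangle=\langle u_1,Pu_2\rangle$, i.e.\ $\mathcal{B}(u_1,u_2)=0$; the case $u_1\in C^\infty(M)$ follows from the analogous identity $\langle Pu_2,u_1\rangle=\langle u_2,Pu_1\rangle$ (now with $u_2$ the distribution and $u_1$ the test function) upon taking complex conjugates and using that $\langle\cdot,\cdot\rangle$ is sesquilinear. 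There is no serious obstacle here; the only point requiring care anywhere in the lemma is this last bookkeeping with the conjugations in the sesquilinear pairing, the genuine content being the disjointness $\Lambda^+\cap\Lambda^-=\emptyset$ together with Lemma~\ref{emptywf}.
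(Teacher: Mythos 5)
Your proposal is correct and follows essentially the same route as the paper: uniqueness and the smoothness of $Pu_j^{\pm}$ both come from the disjointness $\Lambda^+\cap\Lambda^-=\emptyset$ via wavefront sets (the paper phrases (1) as $I^0(\Lambda^-)\cap I^0(\Lambda^+)\subset C^{\infty}(M)$), and (3) is exactly the self-adjointness/duality bookkeeping the paper leaves implicit. No gaps.
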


\begin{proof}
(1). In fact, suppose $u_1$ has another decomposition
\begin{equation}
u_1=\tilde{u}_1^-+\tilde{u}_1^+, \quad \tilde{u}_1^{\pm}\in I^{0}(\Lambda^{\pm}),
\end{equation}
then
\begin{equation}
u_1^--\tilde{u}_1^-=-(u_1^+-\tilde{u}_1^+)\in I^0(\Lambda^-)\cap I^0(\Lambda^+)\subset C^{\infty}(M).
\end{equation}

(2). Note that $Pu_j^-=-Pu_j^+ + C^{\infty}(M)$. Hence
\begin{equation}
\WF(Pu_j^-)=\WF(Pu_j^+).
\end{equation}
However we know
\begin{equation}
\WF(Pu_j^{\pm}) \subset \Lambda^{\pm}, \quad \Lambda^-\cap\Lambda^+=\emptyset.
\end{equation}
Thus $Pu_j^{\pm}\in C^{\infty}$.

(3). This follows from the definition of $\mathcal{B}$ and the fact that $P$ is self-adjoint.
\end{proof}

\Remark
The last claim in Lemma \ref{bdefi} shows that $\mathcal{B}$ is defined for equivalent classes in $\left(D^-\oplus D^+\right)/C^{\infty}(M)$.

First we note that
\begin{lemm}
\label{disjoint}
If $u(h)\in \mathscr{D}^{\prime}(M)$, $f(h)\in C^{\infty}(M)$ are $h$-tempered and 
\begin{equation}
\WF_{h}(u(h))\cap\WF_h(f(h))=\emptyset.
\end{equation}
Then we have
\begin{equation}
\langle u(h), f(h) \rangle=O(h^{\infty}), \quad h\rightarrow 0.
\end{equation}
\end{lemm}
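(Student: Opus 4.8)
The plan is to reduce the distributional pairing to an oscillatory-integral estimate via a microlocal partition of unity, and then apply repeated non-stationary phase. First I would fix a cutoff $\chi \in C^\infty(\overline{T}^*M; [0,1])$ supported in large frequencies (of the type appearing in Lemma \ref{semi}), so that on $\supp(1-\chi)$ the distribution $u(h)$ is $h$-tempered with values in a fixed Sobolev space and the pairing with $f(h)$ is trivially $O(h^\infty)$ by Cauchy--Schwarz (a smooth function is $O(h^\infty)$ in every semiclassical Sobolev norm). Thus the whole contribution comes from a conic neighborhood of $\WF_h(u(h)) \subset T^*M$, which we may assume is a fixed compact set $K_1 \subset T^*M\setminus 0$ disjoint from $K_2 := \WF_h(f(h))$.

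Next I would localize both sides. Choose $a, b \in C_c^\infty(T^*M)$ with $a \equiv 1$ near $K_1$, $\supp a$ disjoint from $\supp b$, and $b \equiv 1$ near $K_2$; also choose a cutoff $c$ equal to $1$ near $K_1$ with support disjoint from $\supp b$. Then write
\begin{equation}
\langle u(h), f(h)\rangle = \langle \Op_h(c) u(h), f(h)\rangle + \langle (I - \Op_h(c)) u(h), f(h)\rangle.
\end{equation}
The second term is $O(h^\infty)$ because $(x_0,\xi_0) \notin \WF_h(u(h))$ for $(x_0,\xi_0) \in \supp(1-c)$ on a neighborhood of $K_1$, and off a neighborhood of $K_1$ the distribution $u(h)$ is controlled in a fixed norm while $f(h) = O(h^\infty)$ there — more carefully, one inserts $\Op_h(b)$ acting on $f(h)$ and uses $\WF_h(f(h)) \subset K_2$ together with $\supp b \cap \supp(1-c) $ handled so that the symbolic calculus gives an $O(h^\infty)$ remainder. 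For the first term, write $f(h) = \Op_h(b) f(h) + (I - \Op_h(b)) f(h)$; the piece with $(I-\Op_h(b))$ pairs against $\Op_h(c)u(h)$, and since $\WF_h(f(h)) \subset K_2$ lies in the interior of $\{b = 1\}$ we get $(I-\Op_h(b))f(h) = O(h^\infty)$ in $L^2$ while $\|\Op_h(c)u(h)\|_{L^2} = O(h^{-N})$ for some fixed $N$; hence that piece is $O(h^\infty)$. The remaining piece is $\langle \Op_h(c) u(h), \Op_h(b) f(h)\rangle = \langle \Op_h(b)^* \Op_h(c) u(h), f(h)\rangle$, and by the semiclassical composition calculus $\Op_h(b)^*\Op_h(c) = \Op_h(e) + O(h^\infty)_{\Psi^{-\infty}}$ with $e$ supported in $\supp b \cap \supp c = \emptyset$, so $\Op_h(b)^*\Op_h(c) = O(h^\infty)$ as an operator on $L^2$; pairing with the $h$-tempered $u(h)$ and the $O(h^\infty)$ function $f(h)$ finishes the estimate.

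The main obstacle is bookkeeping the tempered-ness: one must know a priori that $u(h)$ is bounded in some $H^{-N}_h$ uniformly in $h$ (this is exactly the definition of $h$-tempered) and that $f(h)$, being smooth and $h$-tempered, is $O(h^\infty)$ in every $H^s_h$ norm — this last point is where the ``$\WF_h(f(h)) = \emptyset$ in $T^*M\setminus 0$ for a genuinely smooth family'' subtlety would matter, but since $f(h)$ is assumed merely $h$-tempered and smooth in $x$ for each $h$, the hypothesis $\WF_h(u)\cap\WF_h(f) = \emptyset$ is what must carry the argument, and the cutoff $c$ above must be chosen to separate $\WF_h(u(h))$ from $\WF_h(f(h))$ inside $\overline{T}^*M$, including the fiber-infinity boundary. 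Once the separation is arranged in $\overline{T}^*M$, every remainder is either an $O(h^\infty)$ operator composed with a tempered family, or a tempered operator applied to an $O(h^\infty)$ family, and the conclusion $\langle u(h), f(h)\rangle = O(h^\infty)$ follows.
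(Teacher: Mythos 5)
Your core argument in the second paragraph — separate $\WF_h(u)$ and $\WF_h(f)$ by pseudodifferential cutoffs with disjoint essential supports, use the wavefront characterization to make one factor $O(h^\infty)$ in strong norms, and pair it against the other, merely tempered, factor — is exactly the mechanism of the paper's proof (there it is done in two lines with a single $A\in\Psi^0_h(M)$ satisfying $A\equiv I$ near $\WF_h(f(h))$ and $A\equiv 0$ near $\WF_h(u(h))$, so that $(I-A)f(h)=O(h^\infty)_{C^\infty}$ and $A^*u(h)=O(h^\infty)_{C^\infty}$).

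However, your preliminary reduction contains a genuine error. The parenthetical claim that ``a smooth function is $O(h^\infty)$ in every semiclassical Sobolev norm'' is false: take $f(h)\equiv 1$, which is smooth and $h$-tempered with $\|f\|_{L^2}=O(1)$ and $\WF_h(f)=\{\xi=0\}\neq\emptyset$. Consequently you cannot dismiss the region $\supp(1-\chi)$ ``trivially by Cauchy--Schwarz'', and you cannot reduce to a fixed compact $K_1\subset T^*M\setminus 0$: the hypotheses of the lemma allow $\WF_h(u(h))$ to meet the zero section and either wavefront set to meet fiber infinity, and the smallness of the pairing near $\{\xi=0\}$ must come from the disjointness hypothesis, not from smoothness of $f(h)$. (Your last paragraph flags the fiber-infinity issue but does not repair the zero-section step, and the phrase about $\supp b\cap\supp(1-c)$ being ``handled'' is not an argument --- those supports do overlap; what saves that term is that $\supp b$ is disjoint from $\WF_h(u)$, so $\Op_h(b)^*(I-\Op_h(c))u=O(h^\infty)$.) The repair is to drop the frequency splitting altogether: since $\WF_h(u(h))$ and $\WF_h(f(h))$ are disjoint closed subsets of the compact space $\overline{T}^*M$, choose a single symbol $a\in C^\infty(\overline{T}^*M)$ (an $S^0$ symbol, not compactly supported) with $a\equiv 1$ near $\WF_h(f(h))$ and $a\equiv 0$ near $\WF_h(u(h))$, and run your composition argument (or the paper's two-line version) globally; every remainder is then either an $O(h^\infty)$ family paired with a tempered one, or vice versa.
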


\begin{proof}
Let $A\in \Psi_h^0(M)$ such that
\begin{equation}
A\equiv I ~~~~\text{near}~~~~ \WF_h(f(h)),\quad A\equiv 0 ~~~~\text{near}~~~~ \WF_h(u(h)),
\end{equation}
where ``$\equiv$'' means microlocal equivalence -- see \cite[Definition E.29]{res} and \cite[Proposition E.30]{res}. Then we have
\begin{equation}
(I-A)f(h)=O(h^{\infty})_{C^{\infty}}, \quad A^*u(h)=O(h^{\infty})_{C^{\infty}}.
\end{equation}
Thus
\begin{equation}\begin{split}
\langle u(h),f(h) \rangle
= & \langle u(h),Af(h) \rangle+O(h^{\infty})\\
= & \langle A^*u(h),f(h) \rangle +O(h^{\infty})=O(h^{\infty}).
\end{split}\end{equation}
This concludes the proof.
\end{proof}




\begin{lemm}
\label{localfourier}
Suppose $Q(x,hD)\in h\Psi^{\comp}_h(\RR^2)$ satisfies that $Q(x,hD)=\Op_h(q_h(x,\xi))$, $\essspt(q_h)$ is a compact subset of $T^*\RR^2\setminus 0$ and $q_h=q_{h,0}+O(h^2)_{S^{-1}(T^*\RR^2)}$ as $h\rightarrow 0$. Suppose
\begin{equation}
u(x)=\int e^{i(\langle x,\xi \rangle-F(\xi))}a(\xi)d\xi
\end{equation}
where $F\in C^{\infty}(\RR^2)$ is homogeneous of order $1$, $a$ is supported in some conic subset of $\Gamma_0$ and $a\in S^{-\frac12}(\Lambda)$. Let $\mathcal{F}$ be the Fourier transform.
Then for $\xi\in \Gamma_0$,
\begin{equation}
\mathcal{F}(Q(x,hD)u)
=(2\pi)^2 e^{-i F(\xi)}(q_{h,0}(\partial_{\xi}F(\xi),h\xi)
a(\partial_{\xi}F(\xi),\xi)+R(h,\xi)+O(h|\xi|^{-N}))
\end{equation}
with $R(h,\xi)=O(h^{\frac52})$ and $R=0$ if $|\xi|\leq h/C$ or $|\xi|\geq Ch$, $C\gg 1$, $N \gg 1$, as $h\rightarrow 0$, $|\xi|\rightarrow \infty$.
\end{lemm}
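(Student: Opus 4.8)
The plan is to apply $Q(x,hD) = \Op_h(q_h)$ directly to the oscillatory integral representation of $u$, commute the operator past the $\xi$-integral, and evaluate the inner $x$-integral by stationary phase. First I would write, using the definition of the semiclassical quantization,
\begin{equation}
\label{lf:step1}
Q(x,hD)u(x) = \frac{1}{(2\pi h)^2}\int\!\!\int\!\!\int e^{\frac{i}{h}\langle x-y,\zeta\rangle} q_h(x,\zeta)\, e^{i(\langle y,\xi\rangle - F(\xi))} a(\xi)\, dy\, d\zeta\, d\xi,
\end{equation}
then substitute $\zeta = h\xi + h\eta$ (so that the $y$-integral produces a $\delta$-type localization forcing $\eta$ small on $\supp a$) and take the Fourier transform in $x$. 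After the change of variables the $y$-integral is an oscillatory integral in $(y,\eta)$ with phase $\langle y, \xi+\eta\rangle - \langle y,\xi\rangle = \langle y,\eta\rangle$ up to the factor $q_h$ evaluated at the appropriate arguments; this is exactly the setup for a stationary phase expansion in $(y,\eta)$ with critical point $\eta = 0$, $y = \partial_\xi F(\xi)$. The homogeneity of $F$ and the fact that $a \in S^{-1/2}$ is supported in a cone guarantee that $|\partial_\xi F(\xi)|$ stays in a compact set and that the amplitude has the right symbol bounds for the expansion to be uniform in $|\xi| \to \infty$.

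The leading term of the stationary phase expansion gives $(2\pi)^2 e^{-iF(\xi)} q_{h,0}(\partial_\xi F(\xi), h\xi)\, a(\partial_\xi F(\xi),\xi)$: the value $q_{h,0}$ rather than $q_h$ appears because $q_h = q_{h,0} + O(h^2)_{S^{-1}}$ and the $O(h^2)_{S^{-1}}$ error, when paired with $a \in S^{-1/2}$ and the $\essspt$ constraint $|\zeta| = |h\xi| \sim h$, contributes a term of size $O(h^2 \cdot h^{-1} \cdot h^{1/2}) = O(h^{3/2})$ which I would fold into $R(h,\xi)$. The subleading terms of the stationary phase expansion are one order of $h$ better than the main term's natural size; tracking the weights carefully — the main term is $O(h^{-1}\cdot h^{1/2}) = O(h^{-1/2})$... actually here one must be careful, so let me instead note that because $\essspt(q_h)$ is compact in $T^*\RR^2 \setminus 0$ and $q_h(x,\zeta)$ vanishes unless $|\zeta| \sim 1$, i.e. $|\xi| \sim 1/h$, the relevant range is $|\xi| \in [h^{-1}/C', Ch^{-1}]$... wait, \eqref{lf:step1} has $\zeta$ at scale $h|\xi|$, so $Q$ acting nontrivially forces $h|\xi| \sim 1$. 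Using $a \in S^{-1/2}$ gives $a(\partial_\xi F,\xi) = O(|\xi|^{-1/2}) = O(h^{1/2})$, making the main term $O(h^{1/2})$ in size; then the first correction term in the stationary phase expansion is $O(h \cdot h^{1/2}) = O(h^{3/2})$, and the statement asks for $R = O(h^{5/2})$, so in fact I expect \emph{two} terms of the expansion to need separate treatment — either the first correction vanishes by a parity/symmetry argument, or the hypothesis $q_h = q_{h,0} + O(h^2)$ (note the gap between order $0$ and order $2$, skipping order $1$) is precisely what kills the $O(h^{3/2})$ term, leaving the genuine remainder at $O(h^{5/2})$.

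The main obstacle will be making the stationary phase expansion uniform as $|\xi| \to \infty$ while simultaneously extracting the correct power of $h$ for the remainder $R$. This requires bookkeeping the two competing scales — $h \to 0$ and $|\xi| \to \infty$, linked by $h|\xi| \sim 1$ on the support of the integrand — and verifying that the amplitude $q_h(x, h\xi+h\eta) a(\partial_\xi F(\xi)+\cdots, \xi)$, after rescaling, lies in a fixed symbol class so that Hörmander's stationary phase lemma with remainder estimates (e.g. \cite[Theorem 3.16]{semi} or the version in \cite[Appendix E]{res}) applies with constants independent of $\xi$. The localization claims — $R = 0$ when $|\xi| \le h/C$ or $|\xi| \ge Ch$ — follow immediately from $\essspt(q_h)$ being a compact subset of $T^*\RR^2 \setminus 0$, since then $q_h(x,\zeta) = 0$ unless $c \le |\zeta| = |h\xi + h\eta| \le C$, which with $\eta$ confined near $0$ pins $|\xi|$ to a band comparable to $h^{-1}$; I would state this first to restrict attention to that band before running the expansion. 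The $O(h|\xi|^{-N})$ term collects the contribution of the non-stationary region in $\eta$ (away from $\eta = 0$), where repeated integration by parts in $y$ gains arbitrary powers of both $h$ and $|\xi|^{-1}$.
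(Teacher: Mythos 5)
Your overall strategy (write $\mathcal F(Q(x,hD)u)$ as an oscillatory integral, localize using the compact essential support of $q_h$, integrate by parts in the non-stationary region, and apply stationary phase at the critical point where the spatial variables sit at $\partial_\xi F(\xi)$ and the frequency variables at $\xi$) is the same as the paper's, which after inserting cutoffs $\gamma(\zeta/|\xi|)\gamma(\eta/|\xi|)$ rescales $(\xi,\zeta,\eta)\mapsto(\lambda\xi,\lambda\zeta,\lambda\eta)$ with $|\xi|\sim 1$ and performs a \emph{joint} stationary phase in $(x,y,\zeta,\eta)$ with large parameter $\lambda$; that rescaling is exactly what resolves the ``two competing scales'' uniformity issue you flag. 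But your proposal has a genuine gap in the size bookkeeping, and it is the decisive one: you never use the hypothesis $Q\in h\Psi^{\comp}_h$, i.e.\ that the symbol $q_h$ (hence $q_{h,0}$) already carries a factor of $h$ — in the application $q_{h,0}=-ih\{p,\chi\}$. With that factor, on the region $h|\xi|\sim 1$ the main term is $O(h\cdot|\xi|^{-1/2})=O(h^{3/2})$ and each successive term of the stationary phase expansion is smaller by $\lambda^{-1}\sim h$, so the first correction is already $O(h^{5/2})$; no parity/symmetry cancellation and no use of the ``gap'' in $q_h=q_{h,0}+O(h^2)_{S^{-1}}$ is needed to kill an $O(h^{3/2})$ term, because there is no such term. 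Likewise the error from replacing $q_h$ by $q_{h,0}$ is $O(h^2)\langle h\xi\rangle^{-1}\cdot O(|\xi|^{-1/2})=O(h^{5/2})$, since $\langle h\xi\rangle\sim 1$ on the essential support — your estimate $O(h^2\cdot h^{-1}\cdot h^{1/2})$ uses $|h\xi|\sim h$, contradicting your own (correct) later observation that $h|\xi|\sim 1$. As written, your argument leaves the claimed bound $R=O(h^{5/2})$ unproved and rests on a speculation about cancellation that is not the actual mechanism.

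A secondary inaccuracy: with the phase you obtain after substituting $\zeta=h\xi+h\eta$, namely $\langle x,\xi+\eta-\xi^*\rangle-\langle y,\eta\rangle-F(\xi)$, the stationary point in $(y,\eta)$ alone is $\eta=0$, $y=x$; the evaluation point $y=\partial_\xi F$ only emerges once you also carry out the stationary phase in the remaining variables (as the paper does, computing the full Hessian
$\begin{pmatrix} 0&0&I&0\\ 0&0&-I&I\\ I&-I&0&0\\ 0&I&0&-\partial_\xi^2F \end{pmatrix}$
at $x=y=\partial_\xi F(\xi)$, $\zeta=\eta=\xi$). So the expansion must be run jointly, with the amplitude $q_h(x,h\lambda\zeta)a(y,\lambda\eta)$ checked to lie in a $\lambda$-uniform symbol class after the rescaling; once you do this and keep the factor of $h$ in $q_h$, your outline does reproduce the paper's proof.
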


\begin{proof}
By the definition we have
\begin{equation}
\mathcal{F}(Q(x,hD)u)(\xi)=\frac{1}{(2\pi)^2}\iiiint e^{i\Phi(x,y,\zeta,\eta;\xi)}q_h(x,h\zeta)a(y,\eta) dx dy d\zeta d\eta
\end{equation}
with
\begin{equation}
\Phi(x,y,\zeta,\eta;\xi)=-\langle x,\xi \rangle+\langle x-y,\zeta \rangle + \langle y,\eta \rangle-F(\eta).
\end{equation}
Let $\gamma\in C_c^{\infty}(\RR^n\setminus 0)$ and $\gamma(\theta)=1$ when $C^{-1}\leq |\theta| \leq C$ for sufficiently large $C$, then by integration by parts
\begin{equation}
\mathcal{F}(Q(x,hD)u)(\xi)=\frac{1}{(2\pi)^2}\iiiint e^{i\Phi(x,y,\zeta,\eta;\xi)}\gamma(\frac{\zeta}{|\xi|})\gamma(\frac{\eta}{|\xi|})
q_h(x,h\zeta)a(y,\eta) dx dy d\zeta d\eta
\end{equation}
up to a term of order $O(h|\xi|^{-\infty})$ as $h\rightarrow 0$, $|\xi|\rightarrow \infty$.
Replace $(\xi,\zeta,\eta)$ by $(\lambda \xi, \lambda \zeta, \lambda \eta)$ with $\lambda>0$, and suppose $1/2\leq |\xi|\leq 2$, we have
\begin{equation}
\mathcal{F}(Q(x,hD)u)(\lambda\xi)
=\frac{\lambda^{4}}{(2\pi)^2}\iiiint e^{i \lambda \Phi(x,y,\zeta,\eta;\xi)}\gamma(\frac{\zeta}{|\xi|})\gamma(\frac{\eta}{|\xi|})
q_h(x,h\lambda\zeta)a(y,\lambda\eta)dx dy d\zeta d\eta
\end{equation}
up to a term of order $O(h|\xi|^{-\infty})$.
Note that
\begin{equation}
\nabla_{x,y,\zeta,\eta} \Phi=(\zeta-\xi, \eta-\zeta, x-y, y-\partial_{\eta}F(\eta)).
\end{equation}
The critical point of $\Phi$ is
\begin{equation}
x=y=\partial_{\xi}F(\xi), \quad \zeta=\eta=\xi.
\end{equation}
At this critical point $\Phi=-F(\xi)$ and
\begin{equation}
\nabla^2_{x,y,\zeta,\eta}\Phi
=\begin{pmatrix}
 0 & 0 & I & 0 \\
 0 & 0 & -I & I \\
 I & -I & 0 & 0 \\
 0 & I & 0 & -\partial^2_{\xi}F(\xi)
\end{pmatrix}.
\end{equation}
By the method of stationary phase we find as $\lambda\rightarrow +\infty$, $h\rightarrow 0$,
\begin{equation}
\mathcal{F}(Q(x,hD)u)(\lambda\xi)
=(2\pi)^2 e^{-i\lambda F(\xi)}(q_h(\partial_{\xi}F(\xi), h\lambda \xi)a(\partial_{\xi}F(\xi),\lambda\xi)+R(h,\lambda \xi)+O(h\lambda^{-N}))
\end{equation}
where $R(h,\lambda \xi)=O(h^{5/2})$ and $R=0$ if $|\lambda|\leq h/C$ or $|\lambda|\geq Ch$, $N\gg 1$.
Hence as $|\xi|\rightarrow \infty$, $h\rightarrow 0$,
\begin{equation}
\mathcal{F}(Q(x,hD)u)(\xi)
=(2\pi)^2 e^{-i F(\xi)}(q_{h,0}(\partial_{\xi}F(\xi),h\xi)a(\partial_{\xi}F(\xi),\xi)+R(h,\xi)+O(h|\xi|^{-N})).
\end{equation}
\end{proof}


\begin{lemm}
\label{localpair}
Suppose that $Q(x,hD)\in h\Psi^{\comp}_h(\RR^2)$ satisfies assumptions in Lemma \ref{localfourier}. Let $u,v\in I^0(\Lambda)$ for some Lagrangian submanifold $\Lambda\subset T^*\RR^2$. 
Then
\begin{equation}
\langle Q(x,hD)u, v \rangle=(2\pi)^{2}\int_{\Lambda} q_{h,0}(\cdot,h\cdot)\sigma(u)\overline{\sigma(v)} +O(h)
\end{equation}
where $\sigma(u), \sigma(v) \in S^{\frac12}/S^{-\frac12}(\Lambda; \mathcal{M}_{\Lambda}\otimes \Omega_{\Lambda}^{\frac12})$  are the principal symbols of the Lagrangian distributions $u$ and $v$.
\end{lemm}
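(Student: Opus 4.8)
The plan is to reduce the pairing $\langle Q(x,hD)u,v\rangle$ to an integral over $\Lambda$ by using the oscillatory-integral representations of $u$ and $v$ together with the stationary-phase computation already carried out in Lemma \ref{localfourier}. First I would use a partition of unity on $\Lambda$ subordinate to a cover $\{\mathcal{U}_k\}$ by conic sets in which $\Lambda$ has a generating function $F_k$ as in \eqref{localcoor}, and, by the microlocality of $Q(x,hD)$ (its essential support is a compact subset of $T^*\RR^2\setminus 0$), reduce to the case where $u$ and $v$ are each represented by a single oscillatory integral $u(x)=\int e^{i(\langle x,\xi\rangle-F(\xi))}a(\xi)\,d\xi$, $v(x)=\int e^{i(\langle x,\eta\rangle-F(\eta))}b(\eta)\,d\eta$ with $a,b\in S^{-1/2}$ supported in a single cone $\Gamma_0$; here the symbols depending on $x$ can be arranged to depend only on $\xi$ after absorbing the $x$-dependence into lower-order terms, which is harmless since only the principal symbol survives. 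The terms coming from the overlaps of the cover and from the $S^{-N}$ remainders contribute $O(h^\infty)$ by Lemma \ref{disjoint} and are discarded.

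Next, I would write $\langle Q(x,hD)u,v\rangle$ via Parseval as $(2\pi)^{-2}\int \mathcal{F}(Q(x,hD)u)(\xi)\,\overline{\mathcal{F}(v)(\xi)}\,d\xi$, and substitute the asymptotic formula from Lemma \ref{localfourier}:
\begin{equation}
\mathcal{F}(Q(x,hD)u)(\xi)=(2\pi)^2 e^{-iF(\xi)}\big(q_{h,0}(\partial_\xi F(\xi),h\xi)a(\partial_\xi F(\xi),\xi)+R(h,\xi)+O(h|\xi|^{-N})\big),
\end{equation}
while $\mathcal{F}(v)(\xi)=(2\pi)^2 e^{-iF(\xi)}b(\xi)$ (up to similar harmless errors). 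The phases $e^{-iF(\xi)}$ cancel against $\overline{e^{-iF(\xi)}}$, leaving an honest integral in $\xi$ of $q_{h,0}(\partial_\xi F,h\xi)\,a\,\overline{b}$ plus error terms. The remainder $R(h,\xi)$ is supported in $|\xi|\sim h^{-1}$ and is $O(h^{5/2})$ there; since $a,b\in S^{-1/2}$ and the $\xi$-shell has volume $O(h^{-2})$, its total contribution is $O(h^{1/2})$, and a slightly more careful bookkeeping (or using that $a,b$ can be taken in $S^{-1/2}$ only in the relevant dyadic range) gives $O(h)$; the $O(h|\xi|^{-N})$ term clearly contributes $O(h)$.

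Finally, I would recognize the leading integral as the invariant pairing on $\Lambda$. Under the parametrization $\xi\mapsto(\partial_\xi F(\xi),\xi)\in\Lambda$, the principal symbols are $\sigma(u)=(2\pi)^{-1}e^{\frac{\pi i}{4}\sgn F''}a\,|d\xi|^{1/2}$ and $\sigma(v)=(2\pi)^{-1}e^{\frac{\pi i}{4}\sgn F''}b\,|d\xi|^{1/2}$ by \eqref{principalsymbol}, so the Maslov factors cancel in $\sigma(u)\overline{\sigma(v)}$ and $|d\xi|^{1/2}\otimes\overline{|d\xi|^{1/2}}=|d\xi|$ is exactly the measure in which the $\xi$-integral is written; the constants combine to give the stated $(2\pi)^2$. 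The pullback of $q_{h,0}$ evaluated along $\Lambda$ at the point with base momentum $h\xi$ is precisely $q_{h,0}(\cdot,h\cdot)$ restricted to $\Lambda$, so the leading term equals $(2\pi)^2\int_\Lambda q_{h,0}(\cdot,h\cdot)\,\sigma(u)\overline{\sigma(v)}$, independent of the choice of generating function and partition of unity (this independence is guaranteed because the left-hand side is).

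The main obstacle I anticipate is controlling the remainder terms uniformly: Lemma \ref{localfourier} gives pointwise bounds on $R(h,\xi)$ with the crucial support restriction $|\xi|\sim h^{-1}$, and one must check that after integrating against $\overline{\mathcal{F}(v)}$ (which decays like $\langle\xi\rangle^{-1/2}$ at best from $a,b\in S^{-1/2}$, but better — like $|\xi|^{-N}$ — outside the conic support where integration by parts applies) the total error is genuinely $O(h)$ rather than just $O(h^{1/2})$. This is handled by noting that the symbols $a,b$ arising from $u,v\in I^0(\Lambda)$ lie in $S^{-1/2}$, that $q_{h,0}\in S^{-1}$ in the semiclassical sense contributes an extra factor of $h$, and that the phase-stationary analysis in Lemma \ref{localfourier} already localized everything to a compact set in the rescaled variable $h\xi$, so the effective $\xi$-domain is $|\xi|\lesssim h^{-1}$ and the combined decay beats the volume growth with room to spare.
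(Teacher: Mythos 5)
Your proposal follows essentially the same route as the paper's proof: apply Parseval, insert the stationary-phase asymptotics of Lemma \ref{localfourier} for $\mathcal{F}(Q(x,hD)u)$ and the corresponding expansion of $\mathcal{F}(v)$ so that the phases $e^{-iF(\xi)}$ cancel, control the remainders using the localization $|\xi|\sim h^{-1}$ forced by the compact essential support of $q_h(\cdot,h\cdot)$ together with the $S^{-1/2}$ decay of the amplitudes, and then rewrite the leading $\xi$-integral as $(2\pi)^2\int_\Lambda q_{h,0}(\cdot,h\cdot)\sigma(u)\overline{\sigma(v)}$ via \eqref{principalsymbol}, with the Maslov factors cancelling and independence of the representatives absorbed into the $O(h)$ error. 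The preliminary reduction to a single generating function with $x$-independent amplitudes is a harmless refinement of what the paper does implicitly, so the argument is correct and matches the paper's.
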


\begin{proof}
By Parseval's formula, we have
\begin{equation}
\langle Q(x,hD)u,v \rangle=\langle \mathcal{F}(Q(x,hD)u),\mathcal{F}(v) \rangle.
\end{equation}
Suppose $F\in C^{\infty}(\RR^2)$ is homogeneous of order $1$ and $\Lambda=\{ (x,\xi): x=\partial_{\xi}F(\xi), \xi\in \Gamma_0 \}$ for some open conic subset $\Gamma_0\subset \RR^2$. Then there exist $a,b \in S^{-\frac12}(\Lambda)$ and $a$, $b$ are supported in some conic subset of $\Gamma_0$ such that
\begin{equation}
u(x)=\int e^{i(\langle x,\xi \rangle-F(\xi))}a(x,\xi)d\xi, \quad v(x)=\int e^{i(\langle x,\xi \rangle-F(\xi))}b(x,\xi)d\xi.
\end{equation}
By Lemma \ref{localfourier},
\begin{equation}
\mathcal{F}(Q(x,hD)u)(\xi)
=(2\pi)^2 e^{-i F(\xi)}(q_{h,0}(\partial_{\xi}F(\xi),h\xi)a(\partial_{\xi}F(\xi),\xi)
+R(h,\xi)+O(h|\xi|^{-N})).
\end{equation}
Similarly
\begin{equation}
\mathcal{F}(v)(\xi)=(2\pi)^2 e^{-iF(\xi)}(b(\partial_{\xi}F(\xi),\xi)+O(|\xi|^{-\frac{3}{2}})).
\end{equation}
Thus
\begin{equation}\begin{split}
\langle Q(x,hD)u,v \rangle
= & (2\pi)^{4}\int q_{h,0}(\cdot,h\cdot)a
\overline{b}|_{(\partial_{\xi}F(\xi),\xi)}d\xi +O(h).
\end{split}\end{equation}
By \eqref{principalsymbol},
\begin{equation}
\sigma(u)=(2\pi)^{-1}e^{\frac{\pi i}{4}\sgn\varphi^{\prime\prime}}a(x,\xi)|d\xi|^{\frac{1}{2}}, \quad
\sigma(v)=(2\pi)^{-1}e^{\frac{\pi i}{4}\sgn\varphi^{\prime\prime}}b(x,\xi)|d\xi|^{\frac{1}{2}}
\end{equation}
with $\varphi(x,\xi)=\langle x,\xi \rangle-F(\xi)$.
Thus
\begin{equation}
\label{630}
\langle Q(x,hD)u,v \rangle
= (2\pi)^{2}\int_{\Lambda} q_{h,0}(\cdot,h\cdot)\sigma(u)\overline{\sigma(v)} +O(h).
\end{equation}
Note that formula \eqref{630} holds for any representatives of the principal symbols since the integral of lower order terms can be absorbed by the remainder $O(h)$.
\end{proof}



\begin{prop}
\label{bdryprop}
Suppose $P$ satisfies assumptions in \S \ref{assumption}, $u_j$, $j=1,2$ satisfy assumptions \eqref{microsol}, $\mathcal{B}$ is defined by \eqref{B}, $G_0^{\pm}$ are maps defined in Lemma \ref{G0H0}. Then
\begin{equation}
\label{bdry}
\frac{i}{(2\pi)^2}\mathcal{B}(u_1,u_2)
=\int_{\mathbb S^1}\Big( G_0^+(u_1^+)\cdot G_0^+(u_2^+)-G_0^-(u_1^-)\cdot G_0^-(u_2^-) \Big)dS
\end{equation}
where $\cdot$ is the Hermitian product on $\CC^{d}$, $dS$ is the standard density on $\mathbb S^1$.

\end{prop}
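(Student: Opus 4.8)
The plan is to realize $\mathcal{B}(u_1,u_2)$ as a limit of semiclassical commutators and then to evaluate that limit with Lemma~\ref{localpair}, reading off the flux at the limit cycles from the transport normal form of \S\ref{te}. First I would fix a cutoff $\chi$, a function of $|\xi|$ alone, with $\chi\equiv 0$ on $\{|\xi|\le R_0\}$ and $\chi\equiv 1$ on $\{|\xi|\ge 2R_0\}$, and set $\chi_h:=\chi(x,hD)$ as in Lemma~\ref{semi}. Since $I-\chi_h$ is smoothing and tends to $I$, and $Pu_j\in C^\infty(M)$ by Lemma~\ref{bdefi}, writing $[P,\chi_h]=(I-\chi_h)P-P(I-\chi_h)$ and using the self-adjointness of $P$ (through the definition of the distributional action on test functions) one gets
\[
\mathcal{B}(u_1,u_2)=\lim_{h\to 0}\langle [P,\chi_h]u_1,u_2\rangle .
\]
By Lemma~\ref{semi}, $[P,\chi_h]\in h\Psi^{\comp}_h(M)$ with principal symbol $-ih\{p,\chi\}$, and $\WF_h([P,\chi_h])$ is compact in $T^*M\setminus 0$; note the subprincipal symbol of $P$ does not enter this principal symbol, which is why no vanishing assumption on it is needed here.

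Next I would split $u_j=u_j^-+u_j^+$ by Lemma~\ref{bdefi} and expand $\langle [P,\chi_h]u_1,u_2\rangle$ into four pieces. For the two cross pieces, $[P,\chi_h]u_1^+$ is a smooth $h$-tempered family with $\WF_h([P,\chi_h]u_1^+)\subset\WF_h([P,\chi_h])\cap\WF_h(u_1^+)$, a compact subset of $\Lambda^+\setminus 0$, disjoint from $\WF_h(u_2^-)\subset\Lambda^-$ since $\Lambda^+\cap\Lambda^-=\emptyset$; hence $\langle [P,\chi_h]u_1^+,u_2^-\rangle=O(h^\infty)$ by Lemma~\ref{disjoint}, and likewise for the other cross piece, so only $\langle [P,\chi_h]u_1^\pm,u_2^\pm\rangle$ survive. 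For the diagonal pieces, a partition of unity on $\Lambda^\pm$ subordinate to coordinate charts adapted to the Lagrangians, together with Lemma~\ref{localpair}, gives
\[
\langle [P,\chi_h]u_1^\pm,u_2^\pm\rangle=(2\pi)^2\int_{\Lambda^\pm}\big(-ih\{p,\chi\}\big)(\cdot,h\cdot)\,\sigma(u_1^\pm)\,\overline{\sigma(u_2^\pm)}+O(h).
\]

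The heart of the matter is evaluating this integral. Since $p$ is homogeneous of degree $0$, one checks that $h\{p,\chi\}(x,h\xi)$ restricted to a bicharacteristic $t\mapsto(x(t),\xi(t))$ equals $\tfrac{d}{dt}\big[\chi(h\xi(t))\big]$; moreover the integrand is supported where $R_0/h\le |\xi|\le 2R_0/h$, which for small $h$ lies inside $\mathcal{O}^\pm$. Parametrizing $\mathcal{O}^\pm$ by $(z,t)\in K^\pm\times\RR$ with $\nu^\pm=\mu^\pm(z)\,dt$ as in Lemma~\ref{transversal}, and substituting the normal form
\[
\sigma(u_j^\pm)=e^{\frac{\pi i}{4}\sgn\varphi''}\,e^{-iW^\pm}\big(G^\pm(u_j^\pm)(z)+O(|\xi|^{-1})\big)\sqrt{\nu^\pm}
\]
from \S\ref{te} and Lemma~\ref{GH} --- in which $W^\pm$ is real, so the oscillatory factors cancel in $\sigma(u_1^\pm)\overline{\sigma(u_2^\pm)}$ --- the $t$-integral collapses to $\int\tfrac{d}{dt}[\chi(h\xi(z,t))]\,dt$, which by the fundamental theorem of calculus tends to $\pm 1$ as $h\to 0$, because $\chi(h\xi)=1$ at the end where $|\xi|\to\infty$ (that is, at $L^\pm$) and $\chi(h\xi)=0$ on $K^\pm$, where $|\xi|$ stays bounded. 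This yields $\lim_{h\to 0}\langle [P,\chi_h]u_1^\pm,u_2^\pm\rangle=\mp(2\pi)^2 i\int_{K^\pm}G^\pm(u_1^\pm)\overline{G^\pm(u_2^\pm)}\,\mu^\pm$, the signs recording whether $t\to+\infty$ or $t\to-\infty$ reaches $L^\pm$.

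Adding the two diagonal contributions and multiplying by $i/(2\pi)^2$ gives
\[
\tfrac{i}{(2\pi)^2}\mathcal{B}(u_1,u_2)=\int_{K^+}G^+(u_1^+)\overline{G^+(u_2^+)}\,\mu^+-\int_{K^-}G^-(u_1^-)\overline{G^-(u_2^-)}\,\mu^-,
\]
and transporting each integral to $\SP^1$ through $\phi^\pm$ and the half-density trivializations by $\sqrt{\mu^\pm}$ and $\sqrt{dS}$ (the construction in the remark following Lemma~\ref{transversal}, the $\CC^d$ Hermitian product coming from the $d$ components of $K^\pm\cong\bigsqcup_d\SP^1$) rewrites the right-hand side as $\int_{\SP^1}\big(G_0^+(u_1^+)\cdot G_0^+(u_2^+)-G_0^-(u_1^-)\cdot G_0^-(u_2^-)\big)\,dS$, which is the assertion. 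The hard part will be the diagonal computation: showing that after the semiclassical rescaling the commutator symbol localizes exactly onto the attracting/repelling ends $\mathcal{O}^\pm$ and, via the transport normal form, reduces to the pointwise product of the data $G^\pm(u_j^\pm)$ on $K^\pm$ (this reduction uses that $W^\pm$ is real-valued), together with the wavefront-set bookkeeping in the cross-term step that keeps $\WF_h([P,\chi_h]u_1^\pm)$ inside a compact piece of $\Lambda^\pm$.
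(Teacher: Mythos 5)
Your proposal is correct and follows essentially the same route as the paper: realize $\mathcal{B}$ as $\lim_{h\to0}\langle[P,\chi(hD)]u_1,u_2\rangle$ via Lemma \ref{semi}, kill the cross terms with the wavefront-set argument of Lemma \ref{disjoint}, evaluate the diagonal terms with a microlocal partition of unity and Lemma \ref{localpair}, and collapse the $t$-integral of $H_p\chi_h$ along bicharacteristics to $\pm1$ using $\nu^\pm=\mu^\pm\,dt$ and the cancellation of the real phase $W^\pm$, exactly as in the paper's two-step proof. The sign bookkeeping and the final transport to $\mathbb S^1$ via $\phi^\pm$ also match the paper.
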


\begin{proof}

\textbf{Step 1.}
Let $\chi \in C^{\infty}(\overline{T}^*M ;[0,1])$ such that $\chi=0$ when $|\xi|\leq R_0$, $\chi=1$ when $|\xi|\geq 2R_0$ for some $R_0\gg 1$.
Note that
\begin{equation}
\WF_h(Pu_1)\cap\WF_h(\chi(hD)u_1)\subset \{\xi=0\}\cap \{|\xi|\geq R_0\}=\emptyset.
\end{equation}
By Lemma \ref{disjoint}, we know for $h>0$
\begin{equation}
\langle Pu_1,\chi(hD)u_2 \rangle=O(h^{\infty}), \quad \langle \chi(hD)u_1,Pu_2 \rangle=O(h^{\infty})
\end{equation}
as $h\rightarrow 0$.
Thus we have
\begin{equation}\begin{split}
\mathcal{B}(u_1,u_2)
= & \langle Pu_1, (1-\chi(hD))u_2 \rangle - \langle (1-\chi(hD))u_1, Pu_2 \rangle +O(h^{\infty})\\
= & \langle [P, \chi(hD)]u_1, u_2 \rangle +O(h^{\infty}).
\end{split}\end{equation}
Here we used the fact that $P$ is self-adjoint and $(I-\chi(hD))u_1\in C^{\infty}(M)$.
From Lemma \ref{semi} we know that $[P, \chi(hD)]$ is a semiclassical pseudo-differential operator that satisfies assumptions on $Q(x,hD)$ in Lemma \ref{localfourier}.

Since $u_j$ can be decomposed as in the assumption \eqref{microsol}, we know
\begin{equation}
\mathcal{B}(u_1,u_2)=\mathcal{B}(u_1^+,u_2^+)+\mathcal{B}(u_1^-,u_2^-)+
\mathcal{B}(u_1^+,u_2^-)+\mathcal{B}(u_1^-,u_2^+).
\end{equation}
For the term
\begin{equation}
\mathcal{B}(u_1^+,u_2^-)=\langle [P,\chi(x,hD)]u_1^+, u_2^- \rangle+O(h^{\infty}),
\end{equation}
we observe that
\begin{equation}
\WF_{h}([P,\chi(x,hD)]u_1^+)\subset \Lambda^{+}\cap\{|\xi|\geq R_0\}, \quad
\WF_h(u_2^-)\subset \Lambda^-\cup\{\xi=0\},
\end{equation}
hence
\begin{equation}
\WF_{h}([P,\chi(x,hD)]u_1^+)\cap\WF_h(u_2^-)=\emptyset.
\end{equation}
Again by Lemma \ref{disjoint}, we have
\begin{equation}
\mathcal{B}(u_1^+,u_2^-)=O(h^{\infty}).
\end{equation}
Let $h\rightarrow 0$ and we find
\begin{equation}
    \mathcal{B}(u_1^+,u_2^-)=0.
\end{equation}
A similar argument shows that $\mathcal{B}(u_1^-, u_2^+)=0$. Thus we get
\begin{equation}
\label{ppmm}
\mathcal{B}(u_1,u_2)=\mathcal{B}(u_1^+,u_2^+)+\mathcal{B}(u_1^-,u_2^-).
\end{equation}

\noindent
\textbf{Step 2.} Now we analyse the term
\begin{equation}
\mathcal{B}(u_1^-, u_2^-)=\langle [P,\chi(x,hD)]u_1^-, u_2^- \rangle +O(h^{\infty}).
\end{equation}
As in \S \ref{lagrangian}, we assume $\mathcal{U}_j$, $j=1,2,\cdots, m$ are open conic subsets of $\Lambda^-$ such that they cover $\Lambda^-$ and in $\mathcal{U}_j$, distributions in $I^0(\Lambda^-)$ can be expressed in local coordinates as  \eqref{locallag}. Let $\psi_{j}\in C^{\infty}_c(\mathcal{U}_j)$, $j=1,2,\cdots,m$ be a partition of unity of $\Lambda^-$, i.e., $\sum_{j}\psi_j=1$ on $\Lambda^-$, then $\psi_j(x,hD)$ is a microlocal partition of unity of $\Lambda^-$ -- see \cite[Proposition E.30]{res}. Let $\tilde{\psi}_j\in C_c^{\infty}(\mathcal{U}_j)$ such that $\tilde{\psi}_j=1$ on $\supp{\psi_j}$.
Then we have
\begin{equation}
\mathcal{B}(u_1^-, u_2^-)=\sum_{j}\langle \psi_j(x,hD)[P,\chi(x,hD)]u_1^-,\tilde{\psi}_j(x,hD)u_2^- \rangle+O(h^{\infty}).
\end{equation}
We can now compute the summand in local coordinates, using the Fourier transform defined in local coordinates. By Lemma \ref{localpair}, we have
\begin{equation}\begin{split}
& \langle \psi_j(x,hD)[P,\chi(x,hD)]u_1^-,\tilde{\psi}_j(x,hD)u_2^- \rangle \\
= & -i(2\pi)^2h  \int_{\Lambda^-} \psi_j(x,\xi)\{p,\chi\}(x,h\xi) \sigma(u_1^-)(x,\xi)\overline{\sigma(u_2^-)(x,\xi)}+O(h).
\end{split}\end{equation}
Thus we get
\begin{equation}
\mathcal{B}(u_1^-,u_2^-)=-i(2\pi)^2h\int_{\Lambda^-}\{p,\chi\}(x,h\xi) \sigma(u_1^-)(x,\xi)\overline{\sigma(u_2^-)(x,\xi)}+O(h).
\end{equation}
Note that by the definition of $G^-$ -- see Lemma \ref{GH}, we have
\begin{equation}
\sigma(u_1^-)(x,\xi)\overline{\sigma(u_2^-)(x,\xi)}
=(G^-(u_1^-)\overline{G^-(u_2^-)}+O(\langle \xi \rangle)^{-1})\nu^-.
\end{equation}
By Lemma \ref{transversal}, $\nu^-|_{\mathcal{O}^-}=\mu^-(z)dt$. A direct computation shows that $h\{p,\chi\}(x,h\xi)=H_p\chi_h(x,\xi)$ with $\chi_h(x,\xi)=\chi(x,h\xi)$.
Hence for $0<h\ll 1$,
\begin{equation}\begin{split}
\label{mm}
\mathcal{B}(u_1^-,u_2^-)
= & -i(2\pi)^2\int_{\mathcal{O}^-}H_p\chi_h
G^-(u_1^-)\overline{G^-(u_2^-)}\mu^-(z)dt+O(h) \\
= & -i(2\pi)^{2}\int_{K^-}\left( \int_{-\infty}^0 H_p\chi_h dt \right)G^-(u_1^-)\overline{G^-(u_2^-)}\mu^-(z)+O(h) \\
= & i(2\pi)^2\int_{K^-}G^-(u_1^-)\overline{G^-(u_2^-)}\mu^-(z)+O(h).
\end{split}\end{equation}
Here we used the fact that
\begin{equation}
\int_{-\infty}^0 H_p\chi_h(z,t)dt
=\int_{-\infty}^0 \tfrac{d}{dt}\left(\chi_h(z,t)\right)dt=\chi_h(z,t)|_{-\infty}^0=-1.
\end{equation}
Similarly we have
\begin{equation}
\label{pp}
\mathcal{B}(u_1^+,u_2^+)
=-i(2\pi)^2\int_{K^+}G^+(u_1^+)\overline{G^+(u_2^+)}\mu^+(z)+O(h).
\end{equation}
Combine \eqref{ppmm}, \eqref{mm}, \eqref{pp} and let $h\rightarrow 0$ and we get
\begin{equation}\begin{split}
\mathcal{B}(u_1,u_2)
= & -i(2\pi)^2\Big( \int_{K^+}G^+(u_1^+)\overline{G^+(u_2^+)}\mu^+
- \int_{K^-}G^-(u_1^-)\overline{G^-(u_2^-)}\mu^-\Big) \\
= & -i(2\pi)^2\int_{\mathbb S^1}\Big( G_0^+(u_1^+)\cdot G_0^+(u_2^+)-G_0^-(u_1^-)\cdot G_0^-(u_2^-) \Big)dS.
\end{split}\end{equation}
\end{proof}

\section{scattering matrix}
\label{sm}

As in the Introduction, we denote the solution space that we are considering by $\mathcal{Z}$:
\begin{equation}
\label{Z}
\mathcal{Z}:=\{u\in \mathscr{D}^{\prime}(M): Pu=0, u=u^-+u^+, u^{\pm}\in I^0(\Lambda^{\pm})\}.
\end{equation}

Lemma \ref{bdefi} allows us to define
\begin{defi}
\label{boldG}
For any $u\in \mathscr{D}^{\prime}(M)$ satisfying \eqref{microsol}, we define
\begin{equation}
\mathbf{G}^{\pm}: \mathcal{Z}\rightarrow C^{\infty}(\mathbb S^1; \CC^{d}),\quad u\mapsto G_0^{\pm}([u^{\pm}]).
\end{equation}
Here $[u^{\pm}]$ is the equivalent class of $u^{\pm}$ in $D^{\pm}/C^{\infty}(M)$. In particular, $\mathbf{G}^{\pm}$ is defined on $\mathcal{Z}$.
\end{defi}

As an immediate corollary of Proposition \ref{bdryprop}, we have
\begin{corr}
If $u_j\in \mathcal{Z}$, $\mathbf{G}^{\pm}$ are as in Definition \ref{boldG}, then
\begin{equation}
\label{boldpair}
\int_{\mathbb S^1}\Big( \mathbf{G}^+(u_1)\cdot \mathbf{G}^+(u_2)-\mathbf{G}^-(u_1)\cdot\mathbf{G}^-(u_2) \Big)dS=0,
\end{equation}
where $\cdot$ is the standard Hermitian product on $\CC^{d}$, $dS$ is the standard density on $\mathbb S^1$.
\end{corr}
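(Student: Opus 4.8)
The plan is to read the identity off the boundary pairing formula of Proposition~\ref{bdryprop}. First I would check that every $u_j\in\mathcal{Z}$ meets the standing hypothesis~\eqref{microsol}: by the definition~\eqref{Z} of $\mathcal{Z}$ one has $Pu_j=0\in C^\infty(M)$ together with a decomposition $u_j=u_j^-+u_j^+$, $u_j^\pm\in I^0(\Lambda^\pm)$, so Proposition~\ref{bdryprop} applies and gives the formula~\eqref{bdry} relating $\tfrac{i}{(2\pi)^2}\mathcal{B}(u_1,u_2)$ to the integral over $\mathbb S^1$ of $G_0^+(u_1^+)\cdot G_0^+(u_2^+)-G_0^-(u_1^-)\cdot G_0^-(u_2^-)$. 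Since $Pu_1=Pu_2=0$, the definition~\eqref{B} of $\mathcal{B}$ forces $\mathcal{B}(u_1,u_2)=\langle Pu_1,u_2\rangle-\langle u_1,Pu_2\rangle=0$; the vanishing is exact, so no limiting argument is needed.

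It then remains to match the right-hand side of~\eqref{bdry} with the expression in~\eqref{boldpair}. By Lemma~\ref{bdefi}(1)--(2) the splitting $u_j=u_j^-+u_j^+$ is unique modulo $C^\infty(M)$ and each $u_j^\pm$ belongs to $D^\pm$, so the class $[u_j^\pm]\in D^\pm/C^\infty(M)$ is well defined; the remark after Lemma~\ref{bdefi}, together with the construction of $G^\pm$ in Lemma~\ref{GH} and the construction in~\eqref{G0H0}, shows that $G_0^\pm$ factors through this quotient, whence $G_0^\pm(u_j^\pm)=\mathbf{G}^\pm(u_j)$ by Definition~\ref{boldG}. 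Substituting this together with $\mathcal{B}(u_1,u_2)=0$ into~\eqref{bdry} yields~\eqref{boldpair} at once.

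I do not anticipate a genuine obstacle: all the analytic weight has already been carried by Proposition~\ref{bdryprop}, and what is left is the purely formal observation that $\mathbf{G}^\pm$ is insensitive to the $C^\infty(M)$-ambiguity in the decomposition of $u_j$ — which is precisely what Lemma~\ref{bdefi} and the constructions of \S\ref{formalsolution} were set up to provide. If anything, the only point to be slightly careful about is that the pairings $\langle Pu_j,\cdot\rangle$ in~\eqref{B} make sense for distributional $u_j$; but this is immediate once one notes $Pu_j=0\in C^\infty(M)$, so both terms are trivially zero.
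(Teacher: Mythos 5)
Your proposal is correct and follows exactly the route the paper intends: the corollary is obtained by applying Proposition~\ref{bdryprop} to $u_1,u_2\in\mathcal{Z}$, observing that $Pu_1=Pu_2=0$ makes $\mathcal{B}(u_1,u_2)=0$ identically, and identifying $G_0^{\pm}(u_j^{\pm})$ with $\mathbf{G}^{\pm}(u_j)$ via Definition~\ref{boldG} and Lemma~\ref{bdefi}. The paper states this as an immediate consequence without further detail, so your write-up simply makes explicit the same argument.
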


\begin{defi}
\label{poisson}
Let $\mathbf{H}^{\pm}$ be an operator from $C^{\infty}(\mathbb S^1; \CC^{d})$ to $\mathscr{D}^{\prime}(M)$ defined by the formula
\begin{equation}
\mathbf{H}^{\pm}(f)=H_0^{\pm}(f)-(P\pm i0)^{-1}(PH_0^{\pm}(f)).
\end{equation}
Here $H_0^{\pm}(f)$ is an arbitrary representative of $H^{\pm}_0(f)\in D^{\pm}/C^{\infty}(M)$.
\end{defi}

By Lemma \ref{lagregularity}, we know for any $f\in C^{\infty}(\mathbb S^1; \CC^{d})$, $\mathbf{H}^{\pm}(f)\in \mathcal{Z}$.
The following lemma shows that the maps $\mathbf{H}^{\pm}$ are well-defined and in fact each one of $\mathbf{H}^{\pm}$ produces all solutions in $\mathcal{Z}$.
\begin{lemm}
\label{unique}
Let $\mathbf{G}^{\pm}$ and $\mathbf{H}^{\pm}$ be as in Definition \ref{boldG} and Definition \ref{poisson}. Then
\begin{enumerate}
    \item $\mathbf{H}^{\pm}(f)$ do not depend on the choice of the representative of $H^{\pm}_0(f)$;
    \item $\mathbf{G}^{\pm}$, $\mathbf{H}^{\pm}$ are linear and
        \begin{equation}
        \label{boldGH}
            \mathbf{G}^{\pm}\circ \mathbf{H}^{\pm}=\Id_{C^{\infty}(\mathbb S^1;\CC^{d})},\quad \quad \mathbf{H}^{\pm}\circ \mathbf{G}^{\pm}=\Id_{\mathcal{Z}}.
        \end{equation}
\end{enumerate}
\end{lemm}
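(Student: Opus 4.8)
The plan is to derive all three assertions from one observation about the limiting absorption principle: for $g\in C^{\infty}(M)$ one has $(P\pm i0)^{-1}(Pg)=g$. Indeed $Pg\in C^{\infty}(M)\subset H^{\frac12+}(M)$, and $g$ itself solves $Pv=Pg$ with $\WF^{-\frac12}(g)=\emptyset$ contained in the relevant Lagrangian ($\Lambda^-$ for $(P+i0)^{-1}$, $\Lambda^+$ for $(P-i0)^{-1}$), so the uniqueness in Proposition \ref{limitabsorp} --- together with its evident analog at the radial source, obtained by applying Proposition \ref{limitabsorp} to $-P$ --- forces $(P\pm i0)^{-1}(Pg)=g$. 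Throughout I will use that $(P-i0)^{-1}$ is the solution operator producing $\WF^{-\frac12}\subset\Lambda^+$ and $(P+i0)^{-1}$ the one producing $\WF^{-\frac12}\subset\Lambda^-$, and that both are right inverses of $P$.

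Part (1) is then immediate: replacing the representative $H_0^{\pm}(f)$ by $H_0^{\pm}(f)+g$ with $g\in C^{\infty}(M)$ changes the first term in Definition \ref{poisson} by $g$ and the second by $(P\pm i0)^{-1}(Pg)=g$, so $\mathbf{H}^{\pm}(f)$ is unchanged. Given part (1), linearity of $\mathbf{H}^{\pm}$ follows from linearity of $H^{\pm}_0$ (as a map into $D^{\pm}/C^{\infty}(M)$), of $P$, and of $(P\pm i0)^{-1}$; linearity of $\mathbf{G}^{\pm}$ follows from linearity of $G^{\pm}_0$ (see \eqref{G0H0}) once one notes that $u\mapsto[u^{\pm}]$ is a well-defined linear map on $\mathcal{Z}$ by Lemma \ref{bdefi}(1).

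For $\mathbf{G}^{\pm}\circ\mathbf{H}^{\pm}=\Id$: by Lemma \ref{lagregularity} the correction term $(P+i0)^{-1}(PH_0^{+}(f))$ lies in $I^{0}(\Lambda^-)$ (and symmetrically for the other sign), so in the decomposition \eqref{microsol} of $\mathbf{H}^{+}(f)\in\mathcal{Z}$ the $\Lambda^{+}$--component, viewed in $D^{+}/C^{\infty}(M)$, equals $H_0^{+}(f)$; hence $\mathbf{G}^{+}(\mathbf{H}^{+}(f))=(G_0^{+}\circ H_0^{+})(f)=f$ by \eqref{G0H0}. For $\mathbf{H}^{\pm}\circ\mathbf{G}^{\pm}=\Id_{\mathcal{Z}}$: take $u=u^-+u^+\in\mathcal{Z}$ and set $f=\mathbf{G}^{+}(u)=G_0^{+}([u^{+}])$; then $H_0^{+}(f)=[u^{+}]$ in $D^{+}/C^{\infty}(M)$, so $u^{+}$ is a representative and, by part (1), $\mathbf{H}^{+}(f)=u^{+}-(P+i0)^{-1}(Pu^{+})$. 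Since $Pu=0$ we have $Pu^{+}=-Pu^{-}$, and as $\WF(u^{+})\subset\Lambda^{+}$ and $\WF(u^{-})\subset\Lambda^{-}$ are disjoint both $Pu^{\pm}$ are smooth (cf.\ Lemma \ref{bdefi}(2)); thus $-u^{-}$ solves $Pv=Pu^{+}$ with $\WF^{-\frac12}(-u^{-})\subset\Lambda^{-}$, and uniqueness gives $(P+i0)^{-1}(Pu^{+})=-u^{-}$, so $\mathbf{H}^{+}(f)=u^{+}+u^{-}=u$. The case of $\mathbf{H}^{-}$ is identical after interchanging $\Lambda^{+}\leftrightarrow\Lambda^{-}$ and $(P+i0)^{-1}\leftrightarrow(P-i0)^{-1}$.

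I expect the only real care to lie in the sign bookkeeping: correctly pairing each of $(P\pm i0)^{-1}$ with the Lagrangian it is associated to, invoking the source analog of Proposition \ref{limitabsorp}, and --- the one genuinely non-mechanical step --- recognizing that the distribution pinned down by uniqueness as $(P+i0)^{-1}(Pu^{+})$ is $-u^{-}$ rather than $u^{+}$ itself. No new analytic input beyond \S\ref{lap}--\S\ref{formalsolution} is required.
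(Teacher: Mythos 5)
Your proof is correct, but it takes a genuinely different route from the paper's. The paper proves both the representative-independence of $\mathbf{H}^{\pm}$ and the identity $\mathbf{H}^{\pm}\circ\mathbf{G}^{\pm}=\Id_{\mathcal{Z}}$ by forming the difference $u_0$ of the two candidate solutions, observing $u_0\in\mathcal{Z}$ with $\mathbf{G}^{-}(u_0)=0$, and then invoking the boundary pairing formula \eqref{boldpair} to conclude $\mathbf{G}^{+}(u_0)=0$, hence $u_0\in C^{\infty}(M)$, hence $u_0=0$ since $0$ is not an eigenvalue. You bypass the pairing formula entirely and instead exploit the uniqueness clause of Proposition \ref{limitabsorp} (and its mirror at the radial source, obtained from $-P$): the identities $(P\pm i0)^{-1}(Pg)=g$ for $g\in C^{\infty}(M)$ and, for $u=u^-+u^+\in\mathcal{Z}$, $(P+i0)^{-1}(Pu^{+})=-u^{-}$ (resp.\ $(P-i0)^{-1}(Pu^{-})=-u^{+}$), each legitimate because the competing solution has wavefront set in the correct Lagrangian and the right-hand sides $Pg$, $Pu^{\pm}$ are smooth by Lemma \ref{bdefi}(2). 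The identification of the quotient classes, $H_0^{\pm}(\mathbf{G}^{\pm}(u))=[u^{\pm}]$ via \eqref{G0H0} and Lemma \ref{bdefi}(1), is the same in both arguments, as is the treatment of $\mathbf{G}^{\pm}\circ\mathbf{H}^{\pm}=\Id$ and of linearity. What each approach buys: the paper's argument only uses the existence part of the limiting absorption principle plus the pairing formula (which is needed anyway for unitarity), whereas yours is shorter and more explicit — it actually identifies the outgoing correction term as $-u^{\mp}$ — at the cost of leaning on the full uniqueness statement of Proposition \ref{limitabsorp}, which encodes the same injectivity (``$Pv=0$, one-sided wavefront condition, no eigenvalue $\Rightarrow v=0$'') that the paper extracts from the pairing. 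Both are valid given the results of \S\ref{lap}--\S\ref{formalsolution}.
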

\begin{proof}
We only check for $\mathbf{G}^-$, $\mathbf{H}^-$.

(1). Suppose $u_1^-$, $u_2^-$ are two representatives of $H_0^-(f)$. Put $u^-_0=u_1^- - u_2^-$, and
\begin{equation}
u_0:=u_0^-+u^+_0, \quad u_0^+:=-(P-i0)^{-1}(Pu_0^-).
\end{equation}
We only need to show that $u_0=0$.
Note that
\begin{equation}
u_0\in \mathcal{Z}, \quad \mathbf{G}^-(u_0)=0.
\end{equation}
Put $u_1=u_2=u_0$ in \eqref{boldpair} and we find
\begin{equation}
\int_{\mathbb S^1}|\mathbf{G}^+(u_0)|^2dS=0 \Rightarrow \mathbf{G}^+(u_0)=0.
\end{equation}
By the definition of $\mathbf{G}^{\pm}$ we know
\begin{equation}
u_0^{\pm}\in C^{\infty}(M)\Rightarrow u_0\in C^{\infty}(M).
\end{equation}
Since $0$ is not an eigenvalue of $P$ we find $u_0=0$.

(2). We only show $\mathbf{H}^-\circ \mathbf{G}^-=\Id_{\mathcal{Z}}$. Others follow from the definitions.

Suppose $u\in\mathcal{Z}$, $f=\mathbf{G}^-(u)$. Then
\begin{equation}
    u=H_0^{-}(f)+u^+, \quad u^+\in I^0(\Lambda^+).
\end{equation}
Thus
\begin{equation}
u-\mathbf{H}^-(f)\in \mathcal{Z}\cap I^0(\Lambda^+).
\end{equation}
Again by \eqref{boldpair} we get $\mathbf{G}^+(u-\mathbf{H}^-(f))=0$. Thus $u-\mathbf{H}^-(f)\in C^{\infty}(M)\Rightarrow u-\mathbf{H}^-(f)=0$, i.e., $\mathbf{H}^-\circ\mathbf{G}^-(u)=u$.
\end{proof}

\begin{defi}
\label{scatteringmatrix}
We define
\begin{equation}
\mathbf{S}:=\mathbf{G^+}\circ \mathbf{H^-}: C^{\infty}(\mathbb S^1; \CC^{d})\rightarrow C^{\infty}(\mathbb S^1; \CC^{d}).
\end{equation}
We also identify $\mathbf{S}$ with a map between half-density bundles on $\bigsqcup_d\mathbb S^1$ by using the standard density on $\mathbb S^1$.
\end{defi}

By \eqref{boldGH}, we know

\begin{lemm}
\label{scatter}
Suppose $u\in \mathcal{Z}$, then
\begin{equation}
\mathbf{S}\circ\mathbf{G}^-(u)=\mathbf{G}^+(u).
\end{equation}
\end{lemm}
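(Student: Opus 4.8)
The plan is to derive this directly from the identities already recorded in Lemma \ref{unique}, together with the definition $\mathbf{S}:=\mathbf{G}^+\circ\mathbf{H}^-$ in Definition \ref{scatteringmatrix}. Since $\mathcal{Z}\subset\mathscr{D}^{\prime}(M)$ consists of distributions satisfying \eqref{microsol} (indeed $Pu=0$), the maps $\mathbf{G}^{\pm}$ are defined on all of $\mathcal{Z}$ by Definition \ref{boldG}, and $\mathbf{H}^{\pm}$ take values in $\mathcal{Z}$ by Lemma \ref{lagregularity}, so all compositions below make sense.

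First I would fix $u\in\mathcal{Z}$ and set $f:=\mathbf{G}^-(u)\in C^{\infty}(\mathbb S^1;\CC^d)$. Then by Definition \ref{scatteringmatrix},
\begin{equation}
\mathbf{S}\circ\mathbf{G}^-(u)=\mathbf{S}(f)=\mathbf{G}^+\bigl(\mathbf{H}^-(f)\bigr)=\mathbf{G}^+\bigl(\mathbf{H}^-(\mathbf{G}^-(u))\bigr).
\end{equation}
Now I would invoke the second identity in \eqref{boldGH}, namely $\mathbf{H}^-\circ\mathbf{G}^-=\Id_{\mathcal{Z}}$, which gives $\mathbf{H}^-(\mathbf{G}^-(u))=u$. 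Substituting this into the line above yields $\mathbf{S}\circ\mathbf{G}^-(u)=\mathbf{G}^+(u)$, as claimed.

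There is essentially no obstacle: the content is entirely in Lemma \ref{unique}, whose proof used the boundary pairing identity \eqref{boldpair} (hence Proposition \ref{bdryprop}) to show $\mathbf{H}^-\circ\mathbf{G}^-=\Id_{\mathcal{Z}}$. The only point worth a sentence is that $\mathbf{H}^-(f)$ is well defined independently of the choice of representative of $H_0^-(f)$ — also part of Lemma \ref{unique} — so that the expression $\mathbf{S}(f)$ is unambiguous. With that noted, the lemma follows by the chain of equalities above.
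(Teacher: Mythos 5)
Your proposal is correct and follows exactly the paper's route: the lemma is an immediate consequence of Definition \ref{scatteringmatrix} together with the identity $\mathbf{H}^-\circ\mathbf{G}^-=\Id_{\mathcal{Z}}$ from \eqref{boldGH} in Lemma \ref{unique}. The extra remark about well-definedness of $\mathbf{H}^-(f)$ is fine but already contained in Lemma \ref{unique}, so nothing further is needed.
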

Lemma \ref{scatter} is the reason why we call $\mathbf{S}$ the scattering matrix -- it maps the ``incoming'' part $\mathbf{G}^-(u)$ of a solution to the ``outgoing'' part $\mathbf{G}^+(u)$.

Put $u_j=\mathbf{H}^-(f_j)$, with $f_j\in C^{\infty}(\mathbb S^1;\CC^{d})$, $j=1,2$, we can now rewrite \eqref{boldpair} as
\begin{equation}
\label{spair}
\int_{\mathbb S^1} \mathbf{S}(f_1)\cdot\mathbf{S}(f_2) dS=\int_{\mathbb S^1}f_1\cdot f_2 dS.
\end{equation}
As a result of \eqref{spair}, we find
\begin{prop}
\label{unitary}
The operator $\mathbf{S}$ extends to a unitary operator
\begin{equation}
\mathbf{S}: L^2(\mathbb S^1;\CC^{d})\rightarrow L^{2}(\mathbb S^1;\CC^{d}).
\end{equation}
\end{prop}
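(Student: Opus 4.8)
The plan is to combine two ingredients: $\mathbf{S}$ is an isometry on a dense subspace, and it possesses a bounded two-sided inverse there, namely a ``reverse'' scattering map.

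First I would read off from \eqref{spair}, taking $f_1=f_2=f$, that $\|\mathbf{S}f\|_{L^2(\mathbb S^1;\CC^d)}=\|f\|_{L^2(\mathbb S^1;\CC^d)}$ for every $f\in C^\infty(\mathbb S^1;\CC^d)$. Since $C^\infty(\mathbb S^1;\CC^d)$ is dense in $L^2(\mathbb S^1;\CC^d)$, this lets $\mathbf{S}$ extend uniquely to a bounded operator on $L^2(\mathbb S^1;\CC^d)$, still denoted $\mathbf{S}$, which is an isometry; in particular $\mathbf{S}^*\mathbf{S}=\Id$, and \eqref{spair} says $\mathbf{S}$ preserves the Hermitian inner product. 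What remains is surjectivity.

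To obtain it I would introduce the reverse map $\mathbf{S}':=\mathbf{G}^-\circ\mathbf{H}^+\colon C^\infty(\mathbb S^1;\CC^d)\to C^\infty(\mathbb S^1;\CC^d)$, which is linear and well defined by Lemma \ref{unique}. Feeding $u_j=\mathbf{H}^+(g_j)$, $g_j\in C^\infty(\mathbb S^1;\CC^d)$, into \eqref{boldpair} and using $\mathbf{G}^+\circ\mathbf{H}^+=\Id$ from \eqref{boldGH} yields $\int_{\mathbb S^1}g_1\cdot g_2\,dS=\int_{\mathbb S^1}\mathbf{S}'(g_1)\cdot\mathbf{S}'(g_2)\,dS$, so $\mathbf{S}'$ is likewise an $L^2$-isometry on $C^\infty(\mathbb S^1;\CC^d)$ and extends to a bounded operator on $L^2(\mathbb S^1;\CC^d)$. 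Next I would check that $\mathbf{S}'\mathbf{S}=\Id$ and $\mathbf{S}\mathbf{S}'=\Id$ on $C^\infty(\mathbb S^1;\CC^d)$: given $f\in C^\infty(\mathbb S^1;\CC^d)$, put $u:=\mathbf{H}^-(f)\in\mathcal Z$; then $\mathbf{G}^-(u)=f$ and $\mathbf{G}^+(u)=\mathbf{S}(f)$ by Lemma \ref{unique} and Definition \ref{scatteringmatrix}, while $\mathbf{H}^+(\mathbf{G}^+(u))=u$ by \eqref{boldGH}, so $\mathbf{S}'(\mathbf{S}(f))=\mathbf{G}^-(\mathbf{H}^+(\mathbf{S}(f)))=\mathbf{G}^-(u)=f$; the identity $\mathbf{S}\mathbf{S}'=\Id$ follows by the symmetric computation starting from $u=\mathbf{H}^+(g)$.

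Finally, since $\mathbf{S}$ and $\mathbf{S}'$ are bounded on $L^2(\mathbb S^1;\CC^d)$ and satisfy $\mathbf{S}'\mathbf{S}=\Id$, $\mathbf{S}\mathbf{S}'=\Id$ on the dense subspace $C^\infty(\mathbb S^1;\CC^d)$, these relations persist on all of $L^2(\mathbb S^1;\CC^d)$; thus $\mathbf{S}$ is a bijective isometry, hence unitary, with $\mathbf{S}^{-1}=\mathbf{S}'=\mathbf{S}^*$. I expect the only delicate point to be the verification that $\mathbf{S}'$ is a genuine two-sided inverse of $\mathbf{S}$ — this is exactly where Lemma \ref{unique} (that each $\mathbf{H}^\pm$ surjects onto $\mathcal Z$ and that $\mathbf{G}^\pm,\mathbf{H}^\pm$ are mutually inverse) does its work, together with the boundary pairing \eqref{boldpair}; no new microlocal estimates are needed beyond what has already been established.
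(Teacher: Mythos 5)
Your proposal is correct and follows the same route as the paper: the unitarity is read off from the pairing identity \eqref{spair}, which is exactly what the paper's one-line proof invokes. Your additional construction of the reverse map $\mathbf{S}'=\mathbf{G}^-\circ\mathbf{H}^+$ and the verification $\mathbf{S}'\mathbf{S}=\mathbf{S}\mathbf{S}'=\Id$ via Lemma \ref{unique} supplies the surjectivity step that the paper leaves implicit, and it is a correct and welcome completion rather than a different approach.
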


We can now prove Theorem \ref{theorem1} when $\omega$ is not an embedded eigenvalue.
\begin{proof}[Proof of Theorem \ref{theorem1} away from embedded eigenvalues]
Let $H^{\pm}_0$ be defined in \eqref{G0H0}, $\mathbf{S}$ be defined in Definition \ref{scatteringmatrix}.

(1). See Lemma \ref{GH} and the remark below Lemma \ref{GH};

(2). This follows from (1) and Lemma \ref{bdefi};

(3). See Definition \ref{poisson} and Lemma \ref{unique};

(4). See Lemma \ref{scatter}, Definition \ref{boldG}, and the remarks after Lemma \ref{GH}.

(5). See Proposition \ref{unitary}.
\end{proof}


\section{normal forms and microlocal solutions}
\label{nfams}

In this section we review the normal forms for the operator $P$ derived by Colin de Verdi\`ere and Saint-Raymond \cite[Lemma 6.2, Lemma 6.4, Proposition 7.1]{2dwave}. From now on we make the assumption that the subprincipal symbol of $P$ vanishes.

We first define a operator $P_0$, which is a reference operator for the radial sink, on the space $\bigsqcup_d\left(\RR_{x_2}\times \mathbb S^1_{x_2}\right)$. We put
\begin{equation}
    p_0(\lambda^+; x,\xi): =\xi_2/\xi_1-\lambda^+ x_1,
\end{equation}
in the open cone
\begin{equation}
    U^+_0:=\{ (x,\xi)\in T^*(\RR\times \mathbb S^1)\setminus 0: |\xi_2|<c\xi_1 \}
\end{equation}
with small comstant $c$.
Then let $P_0$ be a pseudodifferential operator on $\bigsqcup_d \left( \RR\times \mathbb S^1 \right)$ of order $0$ with full symbol $p_0(\lambda_j^+,\cdot,\cdot)$ in the $j$-th copy of $U_0^+$ and elliptic outside $\bigsqcup_d U_0^+$.

Now we assume that $\{\gamma_j^+\}_{j=1}^{d}\subset\partial\overline{T}^*M$ are the attractive cycles with Lyapunov spectrum $\{\lambda_j^+\}_{j=1}^d$. For any $1\leq j\leq d$, let $U_j^+\subset \overline{T}^*M$ be a conic open neighborhood of $\gamma_j^+$ and $U^+=\cup U_j^+$.
Then we know 
\begin{lemm}(\cite[Lemma 6.2, Lemma 6.4, Proposition 7.1]{2dwave})
\label{normal}
If $P$ satisfies assumptions in Section \ref{assumption} and the subprincipal symbol of $P$ vanishes, then
there exists a homogeneous canonical transform $\mathcal{H}: U^+\rightarrow \bigsqcup_d U^+_0$ and Fourier integral operators $A: \mathscr{D}^{\prime}(\bigsqcup_d\left(\RR\times \mathbb S^1\right))\rightarrow \mathscr{D}^{\prime}(M)$, $B: \mathscr{D}^{\prime}(M)\rightarrow \mathscr{D}^{\prime}(\bigsqcup_d\left(\RR\times \mathbb S^1\right))$ 
with $\WF^{\prime}(A)\subset \rm{graph}(\mathcal{H})$, $\WF^{\prime}(B)\subset \rm{graph}(\mathcal{H}^{-1})$,
such that 
\begin{enumerate}
    \item $\mathcal{H}^*(p|_{U^+_j})=p_0(\lambda^+_j,\cdot,\cdot)$, where $\mathcal{H}^*$ is the pullback of $\mathcal{H}$;
    
    \item $\WF^{\prime}(AB-I)\cap \left(\bigsqcup_d U_0^+\times \bigsqcup_d U^+_0\right)=\emptyset$, $\WF^{\prime}(BA-I)\cap (U^+\times U^+)=\emptyset$; 
    
    \item $B P A\in \Psi^0(M)$ and $\WF^{\prime}(BPA-P_0)\cap  \bigsqcup_d U^+_0=\emptyset$.
\end{enumerate}
\end{lemm}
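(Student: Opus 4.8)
The plan is to assemble the three conclusions from the normal-form analysis of Colin de Verdi\`ere and Saint-Raymond; the statement is in fact \cite[Lemma 6.2, Lemma 6.4, Proposition 7.1]{2dwave}, and I describe how the pieces fit together.

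\emph{Step 1 (the canonical transformation).} Fix an attractive cycle $\gamma_j^+\subset\partial\overline{T}^*M$. By \eqref{nofixedpt} it is a hyperbolic attractive limit cycle of the flow of $X=\kappa_*(|\xi|H_p)$ on $\Sigma$, carrying a single transverse Lyapunov exponent $\lambda_j^+$; lifted to $\overline{T}^*M\setminus 0$ it becomes a radial sink of $|\xi|H_p$, namely the connected component $\kappa^{-1}(\gamma_j^+)$ of $\Lambda^+$. The model symbol $p_0(\lambda_j^+;x,\xi)=\xi_2/\xi_1-\lambda_j^+x_1$ reproduces exactly this local picture, since the flow of $H_{p_0}$ spirals onto $\Lambda_0^+=\{x_1=0,\ \xi_2=0,\ \xi_1>0\}$ with contraction rate $\lambda_j^+$. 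I would therefore invoke the normal-form construction of \cite[Lemma 6.2, Lemma 6.4]{2dwave}, which linearizes the transverse hyperbolic dynamics near the cycle, to obtain a homogeneous symplectomorphism $\mathcal H_j\colon U_j^+\to U_0^+$ with $\mathcal H_j^*\,p_0(\lambda_j^+;\cdot,\cdot)=p|_{U_j^+}$, and set $\mathcal H=\bigsqcup_j\mathcal H_j$; this is (1).

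\emph{Step 2 (quantization).} Next I would quantize $\mathcal H$: let $A$ be an elliptic Fourier integral operator of order $0$ associated to the graph of $\mathcal H$, microlocally supported near $\bigsqcup_d U_0^+\times U^+$, and let $B$ be a microlocal parametrix of $A$, which is then a Fourier integral operator associated to the graph of $\mathcal H^{-1}$; thus $\WF^{\prime}(A)\subset\mathrm{graph}(\mathcal H)$ and $\WF^{\prime}(B)\subset\mathrm{graph}(\mathcal H^{-1})$. By the composition calculus for Fourier integral operators, $AB-I$ and $BA-I$ are microlocally negligible over the relevant product cones, which is (2); and because the canonical relations of $B$ and $A$ are mutually inverse, $BPA$ is a pseudodifferential operator of order $0$ with principal symbol $\mathcal H^*p=p_0$ on $\bigsqcup_d U_0^+$.

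\emph{Step 3 (matching the full symbol) --- the main obstacle.} It remains to upgrade the principal-symbol identity of Step 2 to $\WF^{\prime}(BPA-P_0)\cap\bigsqcup_d U_0^+=\emptyset$, i.e.\ to remove \emph{all} lower-order symbols near the model cone, and this is the genuinely delicate part. The hypothesis that the subprincipal symbol of $P$ vanishes is essential here: it ensures (modulo the natural density normalizations, which are preserved under conjugation by Fourier integral operators quantizing a canonical transformation) that $Q_0:=BPA$ already has vanishing subprincipal symbol, so the correction begins one order below the principal symbol. I would then conjugate $Q_0$ by an elliptic pseudodifferential operator $E$ constructed order by order, at each stage solving a transport equation $H_{p_0}g=b$ along the Hamiltonian flow of $p_0$ near $\Lambda_0^+$; since $\Lambda_0^+$ is a radial sink for that flow with uniform contraction rate $\lambda_j^+$, such an equation has a solution $g$ of the correct homogeneity obtained by integrating towards the sink --- the same mechanism as in Lemma \ref{integralfactor}. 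Borel-summing the corrections gives $E$, and replacing $A$ by $AE^{-1}$ and $B$ by $EB$ --- which does not enlarge $\WF^{\prime}(A)$ or $\WF^{\prime}(B)$ and preserves (2), since $E$ is pseudodifferential --- yields $BPA\equiv P_0$ microlocally on $\bigsqcup_d U_0^+$, which is (3). Controlling the homogeneity of the solutions to these transport equations near the sink, and verifying that no indicial obstruction survives, is precisely the work carried out in \cite[Proposition 7.1]{2dwave}.
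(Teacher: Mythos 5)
Your proposal is sound and coincides with the paper's treatment: the paper offers no proof of Lemma \ref{normal}, quoting it directly from \cite[Lemma 6.2, Lemma 6.4, Proposition 7.1]{2dwave}, and your outline (linearize the hyperbolic dynamics near each attractive cycle to get the homogeneous canonical transformation, quantize it by an elliptic Fourier integral operator with microlocal parametrix, then remove lower-order terms using the vanishing subprincipal symbol and transport equations toward the radial sink) is exactly the strategy of those cited results, to which you likewise defer the delicate points.
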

Thus the operator $P$ is conjugated to the reference operator $P_0$ by Fourier integral operators $A$ and $B$, and microlocally near the limit cycles, $P_0$ has explicit expression. We will call the coordinates $ (x,\xi) \in \bigsqcup_d T^*(\RR\times\mathbb S^1)\setminus 0$ the local coordinates associated to the normal form.

Now we find microlocal solutions by using the microlocal normal forms.

Let $\Lambda_{j}^+=\kappa^{-1}(\gamma_j^+)$ be the Lagrangian submanifold associated to $\gamma_j^+$. By Lemma \ref{normal}, in the local coordinates associated to the normal form, we have
\begin{equation}
    H_{p}|_{\Lambda_j^+}=\tfrac{1}{\xi_1}\partial_{x_2}+\lambda_j^+\partial_{\xi_1}.
\end{equation}
To trivialize the half density bundle on $\Lambda^+$, we put
\begin{equation}
    \nu^+\in\Omega_{\Lambda^+}^{\frac12}, \quad \nu^+|_{\Lambda^+_j}=|dx_2d\xi_1|^{\frac12}.
\end{equation}
Then $\nu^+$ is homogeneous of order $1$ and invariant under the Hamiltonian flow $H_p$, that is,
$\mathcal{L}_{H_p}\nu^+=0$.
Suppose $a(x_2,\xi_2)\nu^+$ solves the transport equation
\begin{equation}
    \tfrac{1}{i}\mathcal{L}_{H_p}(a\nu^+)=0,
\end{equation}
then we find
\begin{equation}
    a|_{\Lambda_j^+}(x_2,\xi_2)=\sum_{k\in \mathbb Z}a_j(k)\xi_1^{-ik/\lambda_j^+}e^{ikx_2}.
\end{equation}
Let $J^+$ be the parametrization of $\Lambda^+$ using bicharacteristics of the Hamiltonian vector filed, that is,
\begin{equation}\begin{split}
    J^+: \bigsqcup_d \left(\mathbb S^1\times  \RR\right)  \rightarrow \Lambda^+, \quad (z,t)  \mapsto e^{tH_p}(0,z,1,0).
\end{split}\end{equation}
Since the bicharacteristics on $\Lambda_j^+$ are
\begin{equation}
    x_2(z,t)-(\lambda_j^+)^{-1}\ln{\xi_1(z,t)}=z \mod{2\pi\mathbb Z},
\end{equation}
and the pullback of the density
\begin{equation}
    (J^+)^*(dx_2 d\xi_1)=(\lambda_j^+)^{-1} dzdt,
\end{equation}
we find
\begin{equation}
    (J^+)^*(a\nu^+)= \left(\sum_{k\in \mathbb Z}a_j(k)e^{i k z}\right)(\lambda_j^+)^{-\frac12}|dz dt|^{\frac12}
\end{equation}
on the $j$-th copy of $\bigsqcup_d \left(\mathbb S^1\times \RR\right)$.
Therefore the half density $\mu^+$ in Lemma \ref{transversal} and  the function $f$ in Lemma \ref{transsolutionlemma} are now
\begin{equation}
\label{symbolrestriction}
    \mu^+(z)=(\lambda_j^+)^{-1}|dz|, \quad f(z)= \sum_{k\in \mathbb Z}a_j(k)e^{ikz}
\end{equation}
on $j$-th copy of $\bigsqcup_d\mathbb S^1$.
On the other hand, from the half density $a\nu^+$, we can construct a microlocal solution
\begin{equation}\begin{split}
\label{microlocalsolution}
    u(x) & =\sum_j X_j^+ \int_0^{\infty}e^{ix_1\xi_1}\sum_{k\in \mathbb Z}a_j(k)\xi_1^{-ik/\lambda_j^+}e^{ikx_2}d\xi_1 \\
    & = \sum_j X_j^+ \sum_{k\in \mathbb Z}\alpha(k/\lambda_j^+)a_j(k)(x_1+i0)^{-1+ik/\lambda_j^+}e^{ikx_2},
\end{split}\end{equation}
with $X_j^+\in \Psi^0(M)$ satisfies that $\WF(X_j^+)$ is contained in a small neighborhood of $U_j^+$ and $\WF(I-X_j^+)\cap U_j^+=\emptyset$, and
\begin{equation}\begin{split}
\label{alphatheta}
     \alpha(x) & :=\frac{i\Gamma(1-ix)}{2\pi}e^{\frac{\pi x}{2}}=:|\alpha(x)|e^{i\theta(x)} , \quad \theta\in \RR/2\pi\mathbb Z,\\
     |\alpha(x)| & = \frac{e^{\frac{\pi x}{2}}}{2\pi}\sqrt{\frac{\pi x}{\sinh{\pi x}}} = \left(\sqrt{\frac{|x|}{2\pi}}+O(|x|^{-\frac12})\right)e^{-\pi x_-}, x\rightarrow \infty \\  
     \theta(x) & = x\ln|x|-x +\pi/2+O(|x|^{-1}) \mod 2\pi\mathbb Z,  \quad x\rightarrow \infty.
\end{split}\end{equation}
Restrict the microlocal solution in $U_j^+$ to $x_1= 1$, we get
\begin{equation}
\label{distributionrestriction}
    \sum_{k\in \mathbb Z}|\alpha(k/\lambda_j^+)|e^{i\theta(k/\lambda_j^+)}a_j(k)e^{ikx_2}.
\end{equation}

Combine \eqref{symbolrestriction}, \eqref{microlocalsolution} and \eqref{distributionrestriction}, we now construct microlocal distributions using functions on cycles near the limit cycles.
\begin{defi}
We define a linear map
\begin{equation}
    \mathbf{R}^+ : C^{\infty}(\mathbb S^1; \CC^{d}) \rightarrow \mathscr{D}^{\prime}(M)
\end{equation}
by the fomula
\begin{equation}
    \mathbf{R}^+f=\sum_{j}X_j^+\mathbf{R}_j^+f_j,
\end{equation}
where
\begin{equation}
    \mathbf{R}_j^+f_j(x)=\sum_{k\in\mathbb Z}|\alpha(k/\lambda_j^+)|\widehat{f_j}(k)(x_1+i0)^{-1+ik/\lambda_j^+}e^{ikx_2}
\end{equation}
in the local coordinates associated to the normal form in Lemma \ref{normal}.
We define $\mathbf{R}^-$ in a similar way.
\end{defi}

We remark that
\begin{lemm}
\label{microsolution}
\begin{enumerate}
    \item The map $\mathbf{R}^+$ extends to distributions, that is,
        \begin{equation}
            \mathbf{R}^+: \mathscr{D}^{\prime}(\mathbb S^1; \CC^d)\rightarrow \mathscr{D}^{\prime}(M).
        \end{equation}
        
    \item For any $f\in \mathscr{D}^{\prime}(\mathbb S^1; \CC^d)$, we have
        \begin{equation}
            \WF(P\mathbf{R}^+f)\cap U^+=\emptyset.
        \end{equation}
\end{enumerate}
\end{lemm}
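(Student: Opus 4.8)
The plan is to reduce both claims to the explicit formula defining $\mathbf{R}^+_j$ together with the asymptotics \eqref{alphatheta} for $|\alpha(x)|$. For part (1), the point is that the Fourier coefficients $\widehat{f_j}(k)$ of a distribution $f_j\in\mathscr{D}^{\prime}(\mathbb S^1)$ grow at most polynomially, and $|\alpha(k/\lambda_j^+)|$ also grows polynomially in $k$ (indeed from \eqref{alphatheta}, $|\alpha(x)|$ behaves like $\sqrt{|x|/2\pi}\,e^{-\pi x_-}$, so for $k$ of either sign $|\alpha(k/\lambda_j^+)|\widehat{f_j}(k)$ is polynomially bounded once we note $x_-=0$ for one sign and $e^{-\pi x_-}$ is bounded on the other after absorbing it — actually $e^{-\pi x_-}\le 1$, so no growth is introduced). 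Hence $\sum_k |\alpha(k/\lambda_j^+)|\widehat{f_j}(k)(x_1+i0)^{-1+ik/\lambda_j^+}e^{ikx_2}$ converges in $\mathscr{D}^{\prime}(\RR_{x_1}\times\mathbb S^1_{x_2})$: the functions $(x_1+i0)^{-1+ik/\lambda_j^+}$ are tempered distributions in $x_1$ with seminorms growing polynomially in $k$, so the series is a polynomially-weighted sum of distributions converging after finitely many integrations by parts in $x_2$. Then $X_j^+$, being a pseudodifferential operator, maps $\mathscr{D}^{\prime}$ to $\mathscr{D}^{\prime}$, giving the extension.

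For part (2), the strategy is to show $\mathbf{R}^+_j f_j$ is, microlocally on $U_j^+$, an exact microlocal solution of $P$. By Lemma \ref{normal}(3) we have $\WF^{\prime}(BPA-P_0)\cap\bigsqcup_d U_0^+=\emptyset$ and $\mathcal{H}^*(p|_{U_j^+})=p_0(\lambda_j^+,\cdot,\cdot)$, so it suffices to check that $u_j(x)=\sum_k |\alpha(k/\lambda_j^+)|\widehat{f_j}(k)(x_1+i0)^{-1+ik/\lambda_j^+}e^{ikx_2}$ satisfies $P_0 u_j\in C^\infty$ microlocally on $U_j^+$, i.e. $\WF(P_0u_j)\cap U_j^+=\emptyset$. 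This is a direct computation: $P_0$ has full symbol $p_0(\lambda_j^+;x,\xi)=\xi_2/\xi_1-\lambda_j^+x_1$ on $U_j^+$, and one computes that $(\xi_2/\xi_1-\lambda_j^+x_1)$ applied to the Lagrangian state $e^{ix_1\xi_1+ikx_2}\xi_1^{-ik/\lambda_j^+}$ — which is what $(x_1+i0)^{-1+ik/\lambda_j^+}e^{ikx_2}$ Fourier-transforms to in $\xi_1>0$ — annihilates the leading symbol exactly, because the phase $x_1\xi_1-\tfrac{k}{\lambda_j^+}\ln\xi_1+kx_2$ has $\partial_{\xi_1}\varphi = x_1-k/(\lambda_j^+\xi_1)$ and the factor $\xi_1^{-ik/\lambda_j^+}$ is precisely the solution of the transport equation $\tfrac1i\mathcal{L}_{H_p}(a\nu^+)=0$ identified in \eqref{microlocalsolution}. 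So $P_0u_j$ is microlocally smooth on $U_j^+$; conjugating back by $A$, $B$ and using $X_j^+$ to localize (its wavefront set sits in a small neighborhood of $U_j^+$ where the normal form applies, and $\WF(I-X_j^+)\cap U_j^+=\emptyset$), one gets $\WF(P\mathbf{R}^+_j f_j)\cap U_j^+=\emptyset$, and since the $X_k^+$ for $k\ne j$ have wavefront sets disjoint from $U_j^+$ up to the cutoff, summing over $j$ gives $\WF(P\mathbf{R}^+f)\cap U^+=\emptyset$.

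The main obstacle I expect is bookkeeping the microsupport of the cutoffs $X_j^+$ against the regions where the normal form of Lemma \ref{normal} is valid, and making precise that $(x_1+i0)^{-1+ik/\lambda_j^+}e^{ikx_2}$ is the inverse Fourier transform (in $\xi_1$, over $\xi_1>0$) of $\alpha(k/\lambda_j^+)^{-1}|\alpha(k/\lambda_j^+)|\xi_1^{-ik/\lambda_j^+}e^{ikx_2}$ — i.e. carefully matching \eqref{microlocalsolution} and \eqref{distributionrestriction}, keeping the phase factors $e^{i\theta(k/\lambda_j^+)}$ straight. The analytic content (polynomial growth of $|\alpha|$ and of Fourier coefficients, convergence of the distributional series, the transport-equation computation) is routine once the conventions are fixed; I would state the wavefront-set containments first, do the $P_0$-computation on one copy, and then assemble.
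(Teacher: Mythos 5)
Your part (2) is fine and is essentially what the paper does: it gives no details beyond ``a direct computation using the normal form of Lemma \ref{normal}'', and your reduction to the model operator $P_0$ (whose full symbol in $U_0^+$ is exactly $p_0$, so the annihilation of $(x_1+i0)^{-1+ik/\lambda_j^+}e^{ikx_2}$ is exact microlocally in the cone, not just to leading order) together with the commutator bookkeeping for $X_j^+$ is the intended argument.

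Part (1), however, contains a genuine gap. You claim that the distributions $(x_1+i0)^{-1+ik/\lambda_j^+}$ have seminorms growing only polynomially in $k$, and you discard the factor $e^{-\pi x_-}$ in $|\alpha|$ as ``bounded by $1$''. Both moves together break the argument: for $x_1<0$ one has $(x_1+i0)^{is}=|x_1|^{is}e^{-\pi s}$, so for the sign of $k$ with $s=k/\lambda_j^+\to-\infty$ the building block satisfies $|(x_1+i0)^{-1+ik/\lambda_j^+}|=|x_1|^{-1}e^{\pi|k|/\lambda_j^+}$, i.e.\ its seminorms grow \emph{exponentially} in $|k|$. Pairing against a smooth test function only produces superpolynomial (not exponential) decay of the Fourier coefficients in $x_2$, so a series whose coefficients are merely polynomially bounded against such blocks need not converge in $\mathscr{D}^{\prime}$. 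What saves the statement is precisely the factor you threw away: $|\alpha(k/\lambda_j^+)|\sim\sqrt{|k|}\,e^{-\pi|k|/\lambda_j^+}$ as $k\to-\infty$ cancels the exponential growth of the branch factor exactly, and this cancellation must be kept in the estimate. The paper sidesteps the issue by taking the Fourier transform in $x_1$ first: since $\int_0^\infty e^{ix_1\xi_1}\xi_1^{-ik/\lambda_j^+}d\xi_1=2\pi\alpha(k/\lambda_j^+)(x_1+i0)^{-1+ik/\lambda_j^+}$, the $k$-th coefficient of $\widehat{\mathbf{R}_j^+f}$ becomes $e^{-i\theta(k/\lambda_j^+)}\widehat f_j(k)$ (the modulus $|\alpha|$ cancels against $\alpha^{-1}$, leaving a unimodular phase), so temperedness of the series is immediately equivalent to polynomial growth of $\widehat f_j(k)$, i.e.\ to $f\in\mathscr{D}^{\prime}(\mathbb S^1;\CC^d)$. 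Either repair (track the exponential cancellation explicitly, or argue on the Fourier-transform side as the paper does) closes the gap; as written, your convergence step fails.
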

The proof of this lemma is the same as the proof of \cite[Lemma 7.4]{2dwave}.
\begin{proof}
We only need to prove the lemma for $\mathbf{R}_j^+$.

Apply Fourier transform to $\mathbf{R}_j^+f$ with respect to $x_1$, we get the following series:
\begin{equation}
    \sum_{k\in \mathbb Z} e^{-i\theta(k/\lambda_j^+)}\widehat{f}_j(k)\xi_1^{-1+ik/\lambda_j^+}e^{ikx_2}.
\end{equation}
Therefore $\mathbf{R}_j^+f\in \mathscr{D}^{\prime}(M)$ if and only if $\widehat{f}_j(k)=O(k^N)$ for some $N\in \mathbb Z$, that is, $f\in \mathscr{D}^{\prime}(\mathbb S^1)$.

(2) can be checked by a direct computation using the normal form of $P$ in Lemma \ref{normal}.
\end{proof}

We now record a useful fact:
\begin{lemm}
\label{discretelag}
Suppose $\alpha$ be as in \eqref{alphatheta}, $\lambda>0$ is a constant, $\mathbb S^1=\RR/2\pi\mathbb Z$. We define $u\in\mathscr{D}^{\prime}(\mathbb S^1)$ by
\begin{equation}
    u(z)=\sum_{k\in \mathbb Z}|\alpha(\lambda^{-1}k)|e^{ikz}.
\end{equation}
Then $u\in I^{3/4}(\Xi)$ where $\Xi:=\{(0,\zeta): \zeta>0\}$. The principal symbol of $u$ is
\begin{equation}
    \sigma(u|dz|^{\frac12})(\zeta)=\widehat{\varphi u}(\zeta)|d\zeta|^{\frac12}
\end{equation}
where $\varphi\in C^{\infty}(\mathbb S^1)$ is supported in a small neighborhood of $z=0$ and $\varphi=1$ near $z=0$.
\end{lemm}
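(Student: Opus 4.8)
The plan is to exhibit $u$ as a one–dimensional conormal distribution at the point $z=0$ by comparing its defining Fourier series with an oscillatory integral via Poisson summation, and then to read off the order $3/4$ and the principal symbol from the asymptotics of $\alpha$ recorded in \eqref{alphatheta}; the jump from the order $1/4$ of a plain delta function to $3/4$ will come precisely from the fact that $|\alpha(\lambda^{-1}\zeta)|\sim\sqrt{\zeta/(2\pi\lambda)}$ is a symbol of order $1/2$, not of order $0$. First I would split $u=u_++u_-$ with $u_\pm=\sum_{\pm k\ge 0}|\alpha(\lambda^{-1}k)|e^{ikz}$; by the second line of \eqref{alphatheta}, $|\alpha(\lambda^{-1}k)|=O(\sqrt{|k|}\,e^{-\pi|k|/\lambda})$ as $k\to-\infty$, so $u_-$ extends to a real–analytic function on $\mathbb S^1$ and can be discarded, which is what will eventually force $\WF(u)\subset\{\zeta>0\}$. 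Next, fixing $\chi\in C^\infty(\RR;[0,1])$ with $\chi=0$ on $(-\infty,1]$ and $\chi=1$ on $[2,\infty)$, I would show that $b(\zeta):=\chi(\zeta)\,|\alpha(\lambda^{-1}\zeta)|$ is a classical symbol of order $1/2$ with principal part $\chi(\zeta)\sqrt{\zeta/(2\pi\lambda)}$: rewriting $|\Gamma(1-ix)|^2=\pi x/\sinh\pi x$ in the form $|\alpha(x)|^2=\tfrac{x}{2\pi}(1-e^{-2\pi x})^{-1}$ gives $|\alpha(x)|=\sqrt{x/(2\pi)}\,(1+g(x))$ with $g$ and all of its derivatives $O(e^{-2\pi x})$ for $x\ge 1$, whence $b\in S^{1/2}$ with a polyhomogeneous expansion.

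Then I would compare the series with an oscillatory integral. Poisson summation applied to $k\mapsto b(k)e^{ikz}$ gives, modulo $C^\infty(\mathbb S^1)$,
\[
u(z)\ \equiv\ \sum_{k\in\mathbb Z}b(k)e^{ikz}\ =\ \sum_{n\in\mathbb Z}\int_0^\infty e^{i\zeta(z-2\pi n)}b(\zeta)\,d\zeta ,
\]
the difference between $\sum_k b(k)e^{ikz}$ and $u$ being a trigonometric polynomial plus the analytic tail $u_-$, hence smooth. The $n=0$ term is $\int_0^\infty e^{iz\zeta}b(\zeta)\,d\zeta$; for $n\neq0$ and $z$ in a small neighbourhood of $0$ the phase coefficient $z-2\pi n$ is bounded away from $0$, so integrating by parts in $\zeta$ turns $\int_0^\infty e^{i\zeta(z-2\pi n)}b(\zeta)\,d\zeta$ into a smooth function of $z$ with bounds summable in $n$. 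Thus near $z=0$ one has $u=\int_0^\infty e^{iz\zeta}b(\zeta)\,d\zeta+r$ with $r\in C^\infty$ near $0$ and $b\in S^{1/2}=S^{3/4-1/4}$; the same non–stationary–phase argument (the resonant case would now require $z\in 2\pi\mathbb Z$) shows $u$ is smooth on $\mathbb S^1\setminus\{0\}$, and the integral representation shows only positive frequencies occur at $z=0$, so $\WF(u)\subset\{(0,\zeta):\zeta>0\}=\Xi$. Since $\Xi$ is the positive half of the conormal bundle of $\{0\}$, carrying the global generating function $F\equiv0$, this proves $u\in I^{3/4}(\mathbb S^1,\Xi)$.

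For the principal symbol I would use the phase $\varphi(z,\zeta)=z\zeta-F(\zeta)=z\zeta$, so that $\sgn\varphi''=0$, and apply \eqref{principalsymbol}: $\sigma(u|dz|^{\frac12})=(2\pi)^{-1}b(\zeta)\,|d\zeta|^{\frac12}$, that is $(2\pi)^{-1}|\alpha(\lambda^{-1}\zeta)|\,|d\zeta|^{\frac12}$ modulo lower order. To recognise the right–hand side of the lemma, note that $\varphi u$ is compactly supported and, near $z=0$, equals $\int_0^\infty e^{iz\zeta}b(\zeta)\,d\zeta$ up to a smooth function; since $\varphi\equiv1$ near $0$ and $b$ is a symbol, a Taylor expansion of $b$ against the Schwartz function $\hat\varphi$ gives $\widehat{\varphi u}(\zeta)=\int\hat\varphi(\zeta-\zeta')b(\zeta')\,d\zeta'+O(\langle\zeta\rangle^{-\infty})= c\,b(\zeta)+O(\langle\zeta\rangle^{-1/2})$ as $\zeta\to+\infty$, with $c$ the relevant normalising constant, and hence $\sigma(u|dz|^{\frac12})(\zeta)=\widehat{\varphi u}(\zeta)\,|d\zeta|^{\frac12}$ with the Fourier–transform convention used in the paper.

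The main obstacle is the analytic bookkeeping: proving the uniform symbol estimates for $|\alpha(\lambda^{-1}\zeta)|$ from \eqref{alphatheta}, and making the Poisson–summation comparison between the series and the oscillatory integral rigorous — in particular controlling the $n\neq0$ terms and the low–frequency truncation error so that the remainder is genuinely smooth near $z=0$ — while keeping track of the $2\pi$–normalisations so that $\widehat{\varphi u}$ is literally the principal symbol and not merely a constant multiple of it.
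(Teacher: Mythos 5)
Your proposal is correct, but it proves the lemma by a genuinely different route than the paper. The paper stays entirely on the discrete side: it first shows $u\in C^{\infty}(\mathbb S^1\setminus\{0\})$ by iterated summation by parts (writing $e^{ikz}=(e^{i(k+1)z}-e^{ikz})/(e^{iz}-1)$ and using that the $N$-th finite difference of $|\alpha(\lambda^{-1}\cdot)|$ is $O(|k|^{\frac12-N})$), and then works directly with $\widehat{\varphi u}(\zeta)=\sum_{k}|\alpha(\lambda^{-1}k)|\widehat\varphi(\zeta-k)$, splitting the $k$-sum into $|k|\le \ell/2$, $k\ge \ell/2$, $k\le-\ell/2$ to get rapid decay as $\zeta\to-\infty$ (using the exponential decay of $|\alpha|$ on the negative axis), and asserting that the same splitting gives the order-$\tfrac12$ symbol bounds as $\zeta\to+\infty$; the principal-symbol statement is then essentially the definition, with the independence of the cutoff $\varphi$ checked at the end. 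You instead pass to the continuous side: you verify that $b(\zeta)=\chi(\zeta)|\alpha(\lambda^{-1}\zeta)|$ is a classical symbol of order $\tfrac12$ with principal part $\sqrt{\zeta/(2\pi\lambda)}$ (via $|\alpha(x)|^2=\tfrac{x}{2\pi}(1-e^{-2\pi x})^{-1}$), and then use Poisson summation plus non-stationary phase in the $n\neq0$ terms to exhibit $u$, modulo $C^\infty(\mathbb S^1)$, as the oscillatory integral $\int_0^\infty e^{iz\zeta}b(\zeta)\,d\zeta$, from which $u\in I^{3/4}(\Xi)$, the wavefront statement, and the symbol all follow from the definitions. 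Your route buys an explicit leading symbol and a full polyhomogeneous expansion, and it reduces the order claim to the standard oscillatory-integral characterization rather than to symbol estimates on $\widehat{\varphi u}$ that the paper only sketches; its price is the need to justify Poisson summation for a non-decaying amplitude (which you correctly flag, and which works since $\mathcal F^{-1}b$ is smooth and rapidly decreasing away from $0$) and to track the $2\pi$-normalization so that $\widehat{\varphi u}$ is literally, and not just up to the constant $c$ you leave implicit, the representative of the principal symbol -- a bookkeeping point on which the paper is no more explicit than you are.
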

\begin{proof}
We first show that $u\in C^{\infty}(\mathbb S^1\setminus \{0\})$. In fact, if $z\neq 0$, then 
\begin{equation}
\label{difference}
    e^{ikz}=\frac{e^{i(k+1)z}-e^{ikz}}{e^{iz}-1}.
\end{equation}
Thus
\begin{equation}\begin{split}
    u(z)= & (e^{iz}-1)^{-1}\sum_{k\in \mathbb Z}|\alpha(\lambda^{-1}k)|(e^{i(k+1)z}-e^{ikz})\\
    = & -(e^{iz}-1)^{-1}\sum_{k\in \mathbb Z}\Delta^{(1)}(|\alpha(\lambda^{-1}\cdot)|)(k)e^{ikz}
\end{split}\end{equation}
with $\Delta^{(1)}(|\alpha(\lambda^{-1}\cdot)|)(k):=|\alpha(\lambda^{-1}(k+1))|-|\alpha(\lambda^{-1}k)|=O(|k|^{-\frac12})$. Use \eqref{difference} again and we find
\begin{equation}
    u(z)=(-1)^2(e^{iz}-1)^{-2}\sum_{k\in\mathbb Z}\Delta^{(2)}(|\alpha(\lambda^{-1}\cdot)|)(k)e^{ikz}
\end{equation}
with $\Delta^{(2)}(|\alpha^{-1}\cdot|)(k):=\Delta^{(1)}(|\alpha(\lambda^{-1}\cdot)|)(k+1)-\Delta^{(1)}(|\alpha(\lambda^{-1}\cdot)|)(k)=O(|k|^{-\frac32})$. By induction we find that 
\begin{equation}
    u(z)=(-1)^N(e^{iz}-1)^{-N}\sum_{k\in\mathbb Z}\Delta^{(N)}(|\alpha(\lambda^{-1}\cdot)|)(k)e^{ikz}
\end{equation}
with $\Delta^{(N)}(|\alpha(\lambda^{-1}\cdot)|)(k)=O(|k|^{\frac12-N})$, for any $N\in \NN$. Thus $u\in C^{\infty}(\mathbb S^1\setminus \{0\})$.

Now we pick a function $\varphi\in C^{\infty}(\mathbb S^1)$ that is supported near in a small neighborhood of $z=0$ and $\varphi=1$ near $z=0$. Now we have
\begin{equation}
    \widehat{\varphi u}(\zeta)=\sum_{k\in\mathbb Z}|\alpha(\lambda^{-1}k)|\widehat{\varphi}(\zeta-k)
\end{equation}
where $\widehat{\varphi u}$ is the Fourier transform on $\RR$ and we identify $\supp{\varphi}$ as a subset of $(-\pi,\pi)\subset\RR$. Suppose $-2\ell\leq \zeta \leq -\ell$ for some large $\ell\in\NN$. 
\begin{equation}\begin{split}
    |\widehat{\varphi u}(\zeta)|
    \leq \Big( \sum_{|k|\leq \ell/2}+\sum_{k\geq \ell/2} +\sum_{k\leq -\ell/2} \Big)|\alpha(\lambda^{-1}k)||\widehat{\varphi}(\zeta-k)|
\end{split}\end{equation}
When $|k|\leq \ell/2$,we have $|\zeta-k|\geq \ell/2$, hence
\begin{equation}
    \sum_{|k|\leq \ell/2}\leq C\sum_{|k|\leq \ell/2}\sqrt{|k|}(\ell/2)^{-N}=O(\ell^{-N+\frac32}).
\end{equation}
When $k\geq \ell/2$, we have $|\zeta-k|=|\zeta|+k$, hence
\begin{equation}
    \sum_{k\geq \ell/2}\leq C\sum_{k\geq \ell/2}\sqrt{k}(|\zeta|+k)^{-N}\leq \sum_{k\geq\ell/2}k^{-N+\frac12}=O(\ell^{-N+\frac32}).
\end{equation}
For the last partial sum,
\begin{equation}
    \sum_{k\leq -\ell/2}\leq \sum_{k\leq -\ell/2}e^{-\delta_0|k|}=O(e^{-\delta_0 \ell/2})
\end{equation}
with $\delta_0>0$ depends only on $\lambda$.
Finally we get
\begin{equation}
    |\widehat{\varphi u}(\zeta)|=O(|\zeta|^{-N})
\end{equation}
for any $N$ as $\zeta\rightarrow -\infty$. Hence 
\begin{equation}
    \WF(u)\subset \Xi.
\end{equation}
One can show that $\widehat{\varphi u}$ is in fact a symbol of order $1/2$ in $\Xi$ using the same method. Thus $u\in I^{3/4}(\Xi)$. Note that $\sigma(u|dz|^{\frac12})$ does not depend on the choice of $\varphi$. Suppose $\widetilde{\varphi}$ is another smooth function on $\mathbb S^1$ that is supported in a small neighborhood of $z=0$ and $\widetilde{\varphi}=1$ near $z=0$, then $\varphi-\widetilde{\varphi}\in C_c^{\infty}(\mathbb S^1\setminus \{0\})$. Since $u(z)\in C^{\infty}(\mathbb S^1\setminus \{0\})$, we know $(\varphi-\widetilde{\varphi})u\in C^{\infty}(\mathbb S^1)$ thus $\widehat{(\varphi-\widetilde{\varphi})u}$ decays rapidly. 
\end{proof}

\begin{lemm}
\label{radialrelation}
Suppose $X^+\in \Psi^0(M)$ and $\WF(X^+)\subset U^+\setminus\Lambda^+$. Then $X^+\mathbf{R}^+$ is a Fourier integral operator of order $1/4$ associated to the canonical relation
\begin{equation}\begin{split}
    C_{X^+ \mathbf{R}^+}= & \{ (x,\xi;y,\eta): (x,\xi)\in \WF(X^+), (x,\xi)\sim \mathbf{j}^+(y,\eta), \eta\neq 0 \} \\
    \subset & T^*M\setminus 0\times \bigsqcup_d T^*\mathbb S^1\setminus 0.
\end{split}\end{equation}
Here $\sim$ means two points lie on the same bicharacteristic of $P$. A similar result holds for $\mathbf{R}^-$.
\end{lemm}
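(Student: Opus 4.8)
The plan is to compute the Schwartz kernel of $X^+\mathbf{R}^+$ explicitly in the normal-form coordinates and to recognise it, over the piece of phase space cut out by $X^+$, as a Lagrangian distribution with the asserted wavefront relation and order. First I would reduce to a single limit cycle and to the model operator. Since $\WF(X^+)\subset U^+\setminus\Lambda^+=\bigsqcup_j(U_j^+\setminus\Lambda_j^+)$ with the $U_j^+$ of pairwise disjoint closures, a microlocal partition of $X^+$ together with $\mathbf{R}^+=\sum_j X_j^+\mathbf{R}_j^+$, the property $X_j^+\equiv I$ near $\WF(X^+)\cap U_j^+$, and microlocality of the $X_l^+$ reduce everything modulo smoothing to: for a single $j$, $X^+\mathbf{R}_j^+$ is an FIO of order $1/4$ with the asserted relation. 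Since the Fourier integral operators $A$, $B$ of Lemma \ref{normal} are elliptic and associated to ${\rm graph}(\mathcal{H}^{\pm1})$, with $\mathcal{H}$ intertwining the $P$- and $P_0$-bicharacteristics and carrying $\Lambda_j^+$ to the model Lagrangian $\{x_1=0,\ \xi_2=0,\ \xi_1>0\}$, it suffices to work with the model operator $P_0$ (principal symbol $p_0=\xi_2/\xi_1-\lambda x_1$) and the model $\mathbf{R}_j^+$, and to transport back by $A$ at the end. In these coordinates the kernel of $X^+\mathbf{R}_j^+$ is $X^+_x K_0$, where
\[
K_0(x_1,x_2,y)=\frac{1}{2\pi}\sum_{k\in\mathbb{Z}}|\alpha(k/\lambda)|\,(x_1+i0)^{-1+ik/\lambda}\,e^{ik(x_2-y)}.
\]

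The core is the microlocal analysis of $K_0$ over $U_j^+\setminus\Lambda_j^+$. For $x_1>0$ bounded away from $0$ one has $(x_1+i0)^{-1+ik/\lambda}=x_1^{-1}e^{i(k/\lambda)\ln x_1}$, so $K_0=\frac{1}{2\pi x_1}u_+(x_2-y+\lambda^{-1}\ln x_1)$ with $u_+(z)=\sum_k|\alpha(k/\lambda)|e^{ikz}$; by Lemma \ref{discretelag}, $u_+\in I^{3/4}$ is conormal to $\{z=0\}$ with frequencies $\zeta>0$, with symbol of order $1/2$ and principal part $\widehat{\varphi u_+}$. Hence $K_0|_{x_1>0}$ is conormal to the (locally finitely many, by compactness of $M$) helical hypersurfaces $\{x_2-y+\lambda^{-1}\ln x_1\in2\pi\mathbb{Z}\}$, with covector a positive multiple of $d(x_2-y+\lambda^{-1}\ln x_1)$. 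A direct computation using the conserved quantities $\xi_2$, $x_1\xi_1$, $x_2-\lambda^{-1}\ln\xi_1$ of the $p_0$-flow shows the corresponding points of $T^*M\setminus0\times T^*\mathbb{S}^1\setminus0$ lie on $\Sigma_{\rm{hom}}$ and on the same bicharacteristic as $\mathbf{j}^+(y,\eta)$, i.e.\ they are exactly the part of $\WF^{\prime}(X^+_xK_0)$ over $\eta>0$, the integer-shifted sheets accounting for periodicity in $x_2,y$. For $x_1<0$, $(x_1+i0)^{-1+ik/\lambda}=-|x_1|^{-1}e^{i(k/\lambda)\ln|x_1|}e^{-\pi k/\lambda}$, so $K_0=-\frac{1}{2\pi|x_1|}u_-(x_2-y+\lambda^{-1}\ln|x_1|)$ with weights $|\alpha(k/\lambda)|e^{-\pi k/\lambda}$, which by the asymptotics of $|\alpha|$ are exponentially small as $k\to+\infty$ and of size $|k|^{1/2}$ as $k\to-\infty$; rerunning the proof of Lemma \ref{discretelag} gives $u_-\in I^{3/4}$, conormal with $\zeta<0$, producing the part of $\WF^{\prime}(X^+_xK_0)$ over $\eta<0$ — the second branch $x_1=-1$ of $\mathbf{j}^+$, which is the branching of the microlocal solution.

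To see that $X^+_xK_0$ has no wavefront in $U_j^+\setminus\Lambda_j^+$ beyond this — in particular over $\{x_1=0,\ \xi_2\neq0\}$, which the computation above does not reach directly — I would invoke Lemma \ref{microsolution}(2): $\WF(P\mathbf{R}^+f)\cap U^+=\emptyset$, so $X^+\mathbf{R}^+f$ satisfies a real–principal–type equation in $U^+$ modulo a pseudodifferential error supported near $\partial\WF(X^+)$, whence $\WF(X^+\mathbf{R}^+f)\cap(U_j^+\setminus\Lambda_j^+)$ is a union of $p_0$-bicharacteristics; together with the explicit computation for $x_1\neq0$ this pins down $\WF^{\prime}(X^+_xK_0)=C_{X^+\mathbf{R}^+}$. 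That $C_{X^+\mathbf{R}^+}$ is a conic Lagrangian relation follows since $\mathbf{j}^{\pm}$ pull the symplectic form induced on $\Sigma_{\rm{hom}}\cap\{x_1=\pm1\}$ back to that of $T^*\mathbb{S}^1$ and flowing out by $H_{p_0}$ preserves isotropy. The order $1/4$ and the principal symbol are then read off, with the conventions of \S\ref{lagrangian}, from the order $1/2$ of the weights $|\alpha(k/\lambda)|$ and the one-dimensional flowout fibres of $C_{X^+\mathbf{R}^+}$, together with $\sigma(X^+)$; transporting back by $A$ replaces the $P_0$-bicharacteristics by those of $P$ and yields the statement. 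The case of $\mathbf{R}^-$ is identical, with the radial source in place of the sink.

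I expect the middle step to be the main obstacle: controlling $K_0$ as $x_1\to0$ at nonzero $\xi_2$, where the helical sheets $\{x_2-y+\lambda^{-1}\ln|x_1|={\rm const}\}$ spiral into $\{x_1=0\}$, and checking that the conormal bundles coming from $x_1>0$ and from $x_1<0$ glue, with the correct coorientations, into the single conic Lagrangian relation $C_{X^+\mathbf{R}^+}$ — this gluing being exactly what produces the two branches $x_1=\pm1$ of $\mathbf{j}^+$. The order and symbol bookkeeping under the flowout structure is the other delicate point, and it is where the precise exponent $3/4$ of Lemma \ref{discretelag} enters.
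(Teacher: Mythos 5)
Your proposal is correct and follows essentially the same route as the paper: reduce to the model kernel in the normal-form coordinates, write it for $\pm x_1>0$ as a smooth multiple of a function of $x_2-y+\lambda^{-1}\ln|x_1|$, invoke Lemma \ref{discretelag} (with the shifted weights $|\alpha(k/\lambda)|e^{-\pi k/\lambda}=|\alpha(-k/\lambda)|$ on $x_1<0$), and identify the resulting conormal bundles of the helical hypersurfaces with the two-branch flowout relation through $\mathbf{j}^+$, with the same order bookkeeping giving $1/4$. The only genuine difference is at your flagged obstacle near $x_1=0$: the paper sidesteps it entirely by choosing a cutoff $\chi\in C_c^{\infty}\left((\RR\setminus\{0\})\times\mathbb S^1\right)$ with $\chi=1$ on $\WF(X^+)$ and writing $X^+\mathbf{R}^+\equiv Y^+\chi\mathbf{R}^+$ modulo smoothing (via \cite[Proposition E.32]{res}), so only finitely many helical sheets ever enter and no propagation or elliptic-regularity argument at $x_1=0$ is required, whereas your use of Lemma \ref{microsolution}(2) plus real-principal-type propagation is an admissible but unnecessary extra step.
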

\begin{proof}
We only need to show that if $\chi\in C^{\infty}_c(\RR_{x_1}\setminus\{0\}\times \mathbb S_{x_2}^1)$ and
\begin{equation}
    R_j^+(x,y)=\chi(x)\sum_{k\in\mathbb Z}|\alpha(k/\lambda_j^+)|(x_1+i0)^{-1+ik/\lambda_j^+}e^{ik(x_2-y)},
\end{equation}
then $R_j^+$ is a Lagrangian distribution of order 1/4  with
\begin{equation}
    \WF^{\prime}(R_j^+)\subset\{ (x,\xi;y,\eta): x\in \supp{\chi}, (x,\xi)\sim (\pm 1,y, \eta/\lambda_j, \eta), \pm\eta>0 \}.
\end{equation}
In fact, since $\WF(X^+)\subset U^+\setminus \Lambda^+$, there exists $\chi\in C_c^{\infty}(\RR_{x_1}\setminus\{0\}\times \mathbb S_{x_2}^1)$, where $x_1, x_2$ are the local coordinates associated to the normal form, such that $\chi=1$ on $\WF(X^+)$. By \cite[Proposition E.32]{res}, there exists $Y^+\in \Psi^0(M)$ such that $\WF(Y^+)\subset \WF(X^+)$ and $Y^+\chi=X^{+}+\Psi^{-\infty}(M)$. Therefore $X^+\mathbf{R}^+=Y^+\chi\mathbf{R}^++\Psi^{-\infty}(M)$ and we find
\begin{equation}
    \WF^{\prime}(X^+\mathbf{R}^+)\subset \WF^{\prime}(Y^+)\circ \WF^{\prime}(\chi\mathbf{R}^+)\subset \WF^{\prime}(X^+)\circ\WF^{\prime}(\chi\mathbf{R}^+).
\end{equation}

Now we study $R_j^{+}$, which is, module smooth functions, the integral kernel of $\chi\mathbf{R}_j^+$ in the coordinates associated to the normal form of $P$.
When $\pm x_1>0$, we have
\begin{equation}
    R_j^+(x,y)= x_1^{-1}\chi(x)\sum_{k\in\mathbb Z}|\alpha(\pm k/\lambda_j^+)|e^{ik(x_2-y+(\lambda_j^+)^{-1}\ln{|x_1|})}.
\end{equation}
We first consider
\begin{equation}
    v(x,y)=\sum_{k\in \mathbb Z}|\alpha(k/\lambda_j^+)|e^{ik(x_2-y+(\lambda_j^+)^{-1}\ln{x_1})}=(F^*u)(x,y)
\end{equation}
with $u$ as in Lemma \ref{discretelag} and $F^*$ is the pullback of the map
\begin{equation}
    F: \RR_{>0}\times\mathbb S^1\times \mathbb S^1\rightarrow \mathbb S^1, \quad (x_1,x_2,y)\mapsto x_2-y+(\lambda_j^+)^{-1}\ln{x_1}.
\end{equation}
By \cite[Corollary 7.9]{micro}, we find
\begin{equation}\begin{split}
    \WF(v)\subset\{ (x,y,\xi,\eta)\in & T^*(\RR_{>0}\times\mathbb S^1\times \mathbb S^1)\setminus 0: ~~~~\text{there exists}~~~~\zeta>0, ~~~~\text{such that}~~~~\\
    x_2- y+ & (\lambda_j^+)^{-1}\ln{x_1}=0, \xi_1=(\lambda_j^+)^{-1}x_1^{-1}\zeta, \xi_2=\zeta, \eta=-\zeta \}
\end{split}\end{equation}
Therefore
\begin{equation}\begin{split}
    \WF^{\prime}(v)\subset & \{ (x,\xi;y,\eta): x_2+(\lambda_j^+)^{-1}\ln{x_1}=y, \xi_2/\xi_1=\lambda_j^+ x_1, \xi_2=\eta,\eta>0 \}\\
    \subset & T^*(\RR_{>0}\times \mathbb S^1)\setminus 0\times T^*\mathbb S^1\setminus 0.
\end{split}\end{equation}
On the other hand, the bicharacteristics of $P$ in $U_j^{+}\setminus \gamma_j^+$ are given by 
\begin{equation}
    x_2+(\lambda_j^+)^{-1}\ln{x_1}=\rm{const} \mod 2\pi\mathbb Z, \quad \xi_2/\xi_1-\lambda_j^+ x_1=0
\end{equation}
in the coordinates associated to the normal form.
Therefore
\begin{equation}\begin{split}
    \WF^{\prime}(v)\subset & \{ (x,\xi;y,\eta): (x,\xi)\sim (1, y,\eta/\lambda_j^+,\eta), \eta>0 \}\\
    \subset & T^*(\RR_{>0}\times \mathbb S^1)\setminus 0\times T^*\mathbb S^1\setminus 0.
\end{split}\end{equation}
Similarly, if we put
\begin{equation}
    w(x,y)=\sum_{k\in\mathbb Z}|\alpha(-k/\lambda_j^+)|e^{ik(x_2-y+(\lambda_j^+)^{-1}\ln{|x_1|})}
\end{equation}
Then $w$ is a Lagrangian distribution with
\begin{equation}\begin{split}
    \WF^{\prime}(w)\subset & \{ (x,\xi;y,\eta): (x,\xi)\sim (-1, y,\eta/\lambda_j^+,\eta), \eta<0 \}\\
    \subset & T^*(\RR_{<0}\times \mathbb S^1)\setminus 0\times T^*\mathbb S^1\setminus 0.
\end{split}\end{equation}
Since $x_1^{-1}\chi(x)$ is a smooth function with support contained in $x_1\neq 0$, our proof is completed by applying \cite[Theorem 7.11]{micro}.
\end{proof}

\section{Propagation of singularities}
\label{ps}

As one can see from Lemma \ref{microsolution}, when $f$ is merely a distribution rather than a smooth function, $P\mathbf{R}_j^+f$ has singularities (that is, it has non-empty wavefront set). To study the microlocal structure of the scattering matrix, we need to study the propagation of singularities of the equation $Pu=0$.

\subsection{Real principal type propagation.}

We first recall the definition of real principal type operators. We refer to \cite[\S 26.1]{horIV} for detailed discussion.

\begin{defi}(\cite[Definition 26.1.8]{horIV})
\label{rpt}
Let $P\in \Psi^m(X)$ be a properly supported pseudodifferential operator. We shall say that $P$ is of real principal type in $X$ if $P$ has a real homogeneous principal part $p$ of order $m$ and no complete bicharacteristic strip of $P$ stays over a compact set in $X$.
\end{defi}

We also need
\begin{defi}(\cite[Definition 26.1.10]{horIV})
If $P$ is of principal type in $X$ we shall say that $X$ is pseudo-convex with respect to $P$ when the following condition is satisfied: for every compact set $K\subset X$ there is another compact set $K^{\prime}\subset X$ such that every bicharacteristic interval with respect to $P$ having end points over $K$ must lie entirely over $K^{\prime}$.
\end{defi}

Now we recall a classical result by Duistermaat and H\"ormander \cite{dh}:
\begin{prop}(\cite[Theorem 26.1.14]{horIV})
\label{principalpara}
Let $P\in \Psi^{m}(X)$ be of real principal type in $X$ and assume that $X$ is pseudo-convex with respect to $P$. Then there exist parametrices $E^+$ and $E^-$ of $P$ 
such that
\begin{equation}
    PE^{\pm}=I+\Psi^{-\infty}(M)
\end{equation}
and
\begin{equation}
    \WF^{\prime}(E^+)=\Delta^*\cup C^+, \quad \WF^{\prime}(E^-)=\Delta^*\cup C^-
\end{equation}
where $\Delta^*$ is the diagonal in $(T^*X\setminus 0)\times (T^*X\setminus 0)$, $C^{\pm}$ is the forward (backward) bicharacteristic relation. We also have
\begin{equation}
    E^+-E^-\in I^{\frac12-m}(X\times X,C^{\prime})
\end{equation}
and $E^+-E^-$ is non-characteristic at every point of $C^{\prime}$, where $C$ is the bicharacteristic relation.
\end{prop}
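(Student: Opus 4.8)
The plan is to follow the classical Duistermaat--H\"ormander construction, reducing the global problem to an explicit model via Fourier integral operators. First I would dispose of the region away from the characteristic variety $p^{-1}(0)$: there $P$ is elliptic, so a standard elliptic parametrix $E_0\in\Psi^{-m}(X)$ satisfies $PE_0-I,\ E_0P-I\in\Psi^{-\infty}$ microlocally off $p^{-1}(0)$, with $\WF^{\prime}(E_0)\subset\Delta^*$. Thus everything of interest is microlocal near $p^{-1}(0)$. Near a point $(x_0,\xi_0)\in p^{-1}(0)$, since $p$ is real, homogeneous, and $dp\neq 0$ there (real principal type), the microlocal normal form theorem for real principal type operators (\cite[\S 21.3]{horIII}, \cite{dh}) provides an elliptic Fourier integral operator $A$ associated to a homogeneous canonical transformation $\chi$ carrying a conic neighbourhood of $(x_0,\xi_0)$ to a conic neighbourhood of a point on $\{\xi_1=0\}\subset T^*\RR^n\setminus 0$, a microlocal inverse $B$ of $A$, and an elliptic $Q\in\Psi^{m-1}$, such that $BPA\equiv Q\,D_{x_1}$ microlocally.

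For the model operator $D_{x_1}$ the forward and backward parametrices are explicit,
\begin{equation}
(E_0^{+}v)(x)=i\int_{-\infty}^{x_1}v(t,x')\,dt,\qquad (E_0^{-}v)(x)=-i\int_{x_1}^{\infty}v(t,x')\,dt,
\end{equation}
and satisfy $D_{x_1}E_0^{\pm}=I$, $\WF^{\prime}(E_0^{\pm})=\Delta^*\cup C_0^{\pm}$, where $C_0^{\pm}$ is the forward/backward bicharacteristic relation of $D_{x_1}$ (the flowout of the diagonal over $\{\xi_1=0\}$ by $\partial_{x_1}$ in positive/negative time). Conjugating back, $A\,E_0^{\pm}\,Q^{-1}B$ is a microlocal forward/backward parametrix near $(x_0,\xi_0)$, with wavefront relation the $\chi\times\chi$-image of $\Delta^*\cup C_0^{\pm}$, i.e. the diagonal together with the flowout of the diagonal over $p^{-1}(0)$ by $H_p$ in positive/negative time.

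Next I would patch globally: cover $p^{-1}(0)$ intersected with the cosphere bundle by finitely many such conic neighbourhoods, take a subordinate microlocal partition of unity, glue the local parametrices, and add the elliptic parametrix $E_0$ off $p^{-1}(0)$. The resulting errors are microlocally smoothing, and the pseudo-convexity hypothesis is precisely what guarantees that the bicharacteristic intervals entering the construction stay over a fixed compact set, so that the gluing is globally consistent and the resulting $E^{\pm}$ are genuine parametrices: $PE^{\pm}=I+\Psi^{-\infty}(X)$ and $\WF^{\prime}(E^{\pm})=\Delta^*\cup C^{\pm}$, where $C^{\pm}$ is the forward/backward bicharacteristic relation of $P$. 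Finally, for $E^{+}-E^{-}$: in the model, $E_0^{+}-E_0^{-}$ has kernel $i\,\delta(x'-y')$, constant in $(x_1,y_1)$, which is a Lagrangian distribution on $\RR^n\times\RR^n$ associated to the full flowout $C_0=\{x'=y',\ \xi'=\eta'\neq 0,\ \xi_1=\eta_1=0,\ x_1,y_1\ \text{free}\}$ (the twisted bicharacteristic relation of $D_{x_1}$), of order $-\tfrac12=\tfrac12-1$, with principal symbol a nonzero constant. Conjugating back, recalling that $D_{x_1}$ has order $1$ while $P$ has order $m$, yields $E^{+}-E^{-}\in I^{\frac12-m}(X\times X,C^{\prime})$ with principal symbol nowhere vanishing on $C^{\prime}$, i.e. non-characteristic at every point.

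The main obstacle is the global bookkeeping in the patching step: ensuring that the FIO normal forms on overlapping conic charts are compatible up to smoothing errors, that the (controlled but non-compact) bicharacteristics do not obstruct assembling a genuine two-sided parametrix, and that orders and principal symbols are tracked accurately through the conjugations so as to land exactly on the stated order $\tfrac12-m$ and the non-characteristicity of $E^{+}-E^{-}$ along $C^{\prime}$. All of this is standard but delicate; for our later purposes we only need the statement as quoted from \cite[Theorem 26.1.14]{horIV}.
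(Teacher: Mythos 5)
This proposition is quoted in the paper directly from H\"ormander \cite[Theorem 26.1.14]{horIV} and is not proved there; your sketch correctly reproduces the classical Duistermaat--H\"ormander argument behind that citation (elliptic parametrix off $p^{-1}(0)$, FIO conjugation to the model $D_{x_1}$ with its explicit forward/backward inverses, patching via a microlocal partition of unity with pseudo-convexity ensuring global consistency, and the computation of $E^+-E^-$ as a nowhere-characteristic element of $I^{\frac12-m}$ on $C^{\prime}$). So your proposal is essentially the same approach as the paper's source, and the orders and wavefront relations you track are consistent with the statement.
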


Now we assume the operator $P$ satisfies assumptions in \S \ref{assumption}. We show that $P$ has parametrices away from the limit cycles. More precisely,
\begin{lemm}
\label{pippara}
For any small open conic neighborhood $U, V$ of $\Lambda:=\Lambda^{-}\cup\Lambda^+$ such that $V\subset U$,
there exist linear maps $\mathbf{E}^+, \mathbf{E}^-: C^{\infty}(M)\rightarrow \mathscr{D}^{\prime}(M)$ such that
\begin{equation}
    P\mathbf{E}^{\pm}=T+\Psi^{-\infty}(M)
\end{equation}
with $T\in\Psi^0(M)$, $\WF(T)\cap V=\emptyset$ and  $\WF(I-T)\cap U=\emptyset$. We also have
\begin{equation}
    \WF^{\prime}(\mathbf{E}^+)\subset(\Delta^*\cup C^+)\setminus (\Lambda\times\Lambda), \quad \WF^{\prime}(\mathbf{E}^-)\subset(\Delta^*\cup C^-)\setminus (\Lambda\times \Lambda)
\end{equation}
and
\begin{equation}
    \mathbf{E}^+-\mathbf{E}^-\in I^{\frac12}(M\times M, C^{\prime}\setminus (\Lambda\times \Lambda)).
\end{equation}
\end{lemm}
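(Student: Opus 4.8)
The plan is to microlocalize away from the limit cycles $\Lambda = \Lambda^-\cup\Lambda^+$ and apply the classical real-principal-type parametrix construction of Proposition 9.3. First I would choose cutoffs: fix open conic neighborhoods $U_1, U_2$ of $\Lambda$ with $V\subset U_2 \subset \overline{U_2}\subset U_1\subset U$, and pick $\chi\in C^\infty(T^*M\setminus 0;[0,1])$ homogeneous of degree $0$ with $\chi=1$ near $\Lambda$, $\supp\chi\subset U_1$, and $\chi=0$ outside $U_2$ in a suitable sense; actually it is cleaner to take $\chi$ supported in $U_1$, equal to $1$ on $\overline{U_2}$, and set $T:=\Op(1-\chi)$ microlocally, so that $\WF(T)\cap V=\emptyset$ and $\WF(I-T)\cap U=\emptyset$ once $V$ is taken well inside $U_2$. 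The point is that $1-\chi$ vanishes near $\Lambda$, hence on $\supp(1-\chi)\cap\Sigma_0$ the Hamilton flow of $p$ is non-trapped: by the Morse–Smale assumption \eqref{nofixedpt} every bicharacteristic not contained in a limit cycle converges to $L^\pm$ as $t\to\pm\infty$, so on the characteristic set away from a neighborhood of $\Lambda$ no complete bicharacteristic strip stays over a compact set.

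Next I would realize $P$ (microlocally on $\supp(1-\chi)$, which is a conic set bounded away from $\Lambda$) as a real-principal-type operator on an auxiliary manifold and invoke Proposition 9.3. Concretely, one can localize the whole construction to the region where $1-\chi$ is supported: there $p$ is real, homogeneous of degree $0$, $0$ is a regular value, and the escape/pseudo-convexity hypotheses hold because of the dynamics just described (trajectories either leave every compact set or else run into the deleted neighborhood of $\Lambda$ in finite time, and in the latter case we simply cut the parametrix off there). Applying Proposition 9.3 in this localized setting yields operators $E^\pm$ with $PE^\pm = I + \Psi^{-\infty}$ microlocally on the support of $1-\chi$, with $\WF'(E^\pm)\subset \Delta^*\cup C^\pm$ and $E^+-E^-\in I^{1/2}$ associated to $C'$. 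I would then define $\mathbf{E}^\pm := E^\pm \circ T$. Since $\WF'(T)\cap V=\emptyset$ and $\WF(T)$ is contained in the region where the parametrix is valid, composing gives $P\mathbf{E}^\pm = T + \Psi^{-\infty}(M)$, and the wavefront relations $\WF'(\mathbf{E}^\pm)\subset \WF'(E^\pm)\circ\WF'(T)$ are automatically disjoint from $\Lambda\times\Lambda$ because $\WF(T)$ avoids a neighborhood of $\Lambda$; the same reasoning gives $\mathbf{E}^+-\mathbf{E}^- = (E^+-E^-)T\in I^{1/2}(M\times M, C'\setminus(\Lambda\times\Lambda))$.

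The main obstacle I anticipate is making the localization of Proposition 9.3 rigorous: that proposition is stated for a genuinely real-principal-type operator on a pseudo-convex manifold, whereas here $P$ fails to be of real principal type globally (the limit cycles are radial points, not escaping bicharacteristics). The standard fix is to modify $P$ outside a neighborhood of $\Sigma_0\setminus(\text{nbhd of }\Lambda)$ so that the modified operator $\tilde P$ is globally of real principal type and pseudo-convex and agrees with $P$ microlocally on $\supp\chi^c$ — for instance by adding a suitable elliptic term supported away from the relevant region and extending trajectories — then apply Proposition 9.3 to $\tilde P$ and compose with $T$. Verifying that $\tilde P$ can be chosen pseudo-convex requires a careful look at the flow: one uses that, after removing a conic neighborhood of $\Lambda$, each bicharacteristic segment with endpoints over a compact set stays over a compact set (this is where Morse–Smale, in particular the absence of saddle connections and the convergence of trajectories to the hyperbolic limit sets, is used). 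Once that geometric input is in place, the rest is bookkeeping with wavefront set calculus.
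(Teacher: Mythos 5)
The central device in your plan --- replacing $P$ by a modification $\tilde P$ that agrees with $P$ microlocally away from $\Lambda$ and is \emph{globally} of real principal type and pseudo-convex on $M$, so that Proposition \ref{principalpara} can be applied once and for all --- cannot exist. Definition \ref{rpt} requires that no complete bicharacteristic strip stay over a compact subset of the underlying manifold; since $M$ is a closed surface, every bicharacteristic of any operator with non-empty characteristic set stays over the compact set $M$, and $\tilde P$ must retain the characteristic set of $P$ wherever it agrees with $P$. Worse, the bicharacteristics inside $\Lambda^{\pm}$ (and those spiralling towards them) project to compact subsets of the base, so real principal type also fails over any base region containing $\pi(\Lambda)$, no matter how you cut in the fibres; this is exactly the part of $\WF(T)$ lying over $\pi(U)$ but away from $\Lambda$ in the fibres, which your single cutoff does not treat separately and which cannot be repaired by ``adding an elliptic term'' or by ``cutting the parametrix off there''. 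The paper's proof is organized around precisely this obstruction: it splits the region away from $\Lambda$ into the part over the open (hence non-compact) manifold $M\setminus\pi(\Lambda)$, where Proposition \ref{principalpara} applies literally, and the part over $\pi(U)$, where --- because the limit cycles are closed bicharacteristics over that base region --- it instead conjugates $P$ by the Fourier integral operators of the normal form (as in Lemma \ref{normal}) to a model operator on the non-compact manifold $\RR\times\mathbb S^1$, which is of real principal type, and pulls the forward/backward parametrices back. This second, Fourier-integral-operator step is the essential idea missing from your proposal.

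Even granting some substitute $\tilde P$, your error analysis is too optimistic. Writing $P\mathbf{E}^{\pm}=\tilde P E^{\pm}T+(P-\tilde P)E^{\pm}T$, the second term is not smoothing: $P-\tilde P$ is microsupported near $\Lambda$, and the forward (backward) flow-out of $\WF(T)\cap\Sigma_0$ enters every conic neighborhood of $\Lambda$, so $(P-\tilde P)E^{\pm}T$ is a Fourier integral operator associated to the bicharacteristic relation, not an element of $\Psi^{-\infty}(M)$. The same kind of error is unavoidable in any honest microlocal gluing: in the paper one first obtains $PE_0^{\pm}=T+R$ with $R\in I^{-\frac12}(M\times M, C^{\prime}\setminus(\Lambda\times\Lambda))$, and a further correction step (\cite[Lemma 26.1.16]{horIV}, i.e.\ solving transport equations along the flow to remove errors carried by $C^{\prime}$) is needed to reach $P\mathbf{E}^{\pm}=T+\Psi^{-\infty}(M)$; your argument has no analogue of this step, so even the weaker conclusion would not follow as written. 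A small additional point: for your $T=\Op(1-\chi)$ one has $\WF(I-T)=\essspt(\chi)\subset U$, so what your cutoff actually produces (and what is used downstream) is $\WF(I-T)\subset U$; read literally, the pair of conditions you assert, $\WF(T)\cap V=\emptyset$ together with $\WF(I-T)\cap U=\emptyset$ with $V\subset U$, would force the identity to be microlocally trivial on $V$ and is not what your construction gives.
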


\begin{proof}
The proof of this lemma is a modification of the argument in the proof of \cite[Theorem 26.1.14]{horIV}.

Let $W_1:=(T^*M\setminus 0)\setminus V$, $W_2:=(T^*\pi(U)\setminus 0)\setminus U$. Then $W_1$, $W_2$ is an open covering of $(T^*M\setminus 0)\setminus U$. Let $T_1, T_2\in \Psi^0(M)$ be a microlocal partition of unity associated to $W_1$ and $W_2$, that is $\WF(T_1+T_2-I)\subset V$, $\WF(T_1)\subset W_1$, $\WF(T_2)\subset W_2$.

The bicharacteristics of $P$ in $W_1$ and $W_2$ satisfies the condition in Definition \ref{rpt}: no complete bicharacteristic strip of $P$ stays in a compact set in $W_1$ or $W_2$. 

Since $P$ is of real principal type on $M\setminus \pi(\Lambda)$, By Lemma Proposition \ref{principalpara}, there exist parametrices $E_{1}^{\pm}$ of $P$ on $M\setminus \pi(\Lambda)$ satisfying conditions in Proposition \ref{principalpara} with $M$ replaced by $M\setminus \pi(\Lambda)$. Let $X_1\in \Psi^0(M)$ such that $\WF(X_1)\cap \Lambda=\emptyset$, $\WF(X_1-I)\cap W_1=\emptyset$. Then 
\begin{equation}\begin{split}
    P X_1 E^{\pm}_1 T_1= & [P, X_1]E_1^{\pm}T_1+X_1PE_1^{\pm}T_1
\end{split}\end{equation}
Since $\WF([P,X_1])\cap\WF(T_1)=\emptyset$, we know $[P,X_1]E^{\pm}_{1}T_1\in I^{-\frac12}(M\times M, C^{\prime}\setminus (\Lambda\times\Lambda))$. We also have $X_1PE^{\pm}_1T_1\equiv X_1T_1\equiv T_1$ over $T^*M\setminus 0$. Thus
\begin{equation}
\label{localpara}
    PX_1E_{1}^{\pm}T_1=T_1+R_1, \quad R_1\in I^{-\frac12}(M\times M, C^{\prime}\setminus (\Lambda\times\Lambda)).
\end{equation}
For $W_2$ and $T_2$, we can not project $W_2$ to the base manifold directly, since $T^*\pi(W_2)\setminus 0$ has closed bicharacteristics. Let $W_2^{\prime}$ be a conic subset of $T^*M\setminus 0$ such that the closure of $\kappa(W_2)$ is contained in $\kappa(W^{\prime}_2)$. Since $\kappa(W^{\prime}_2)$ is a disjoint union of cylinders where the bicharacteristics is of real princicpal type, $P$ has microlocal normal form $D_1$ on $P_2:=\RR_{x_1}\times \mathbb S^1$, by an argument that is similar to the proof of Lemma \ref{normal}. $P_2$ is of real principal type, hence by Proposistion \ref{principalpara}, it has forward and backward parametrices. Thus $P$ also has forward and backward parametrices $E_2^{\pm}$ over $W_2^{\prime}$. Let $X_2\in \Psi^0(M)$ such that $\WF(X_2)\subset W_2^{\prime}$, and $\WF(X_2-I)\subset W_2$. Then as \eqref{localpara}, we have
\begin{equation}
    PX_2E_2^{\pm}T_2=T_2+R_2, \quad R_2\in I^{-\frac12}(M\times M, C^{\prime}\setminus (\Lambda\times\Lambda)).
\end{equation}
If we put
\begin{equation}
T:=T_1+T_2, \quad E_0^{\pm}:=X_1E_1^{\pm}T_1+X_2E_2^{\pm}T_2, \quad R:=R_1+R_2,
\end{equation}
then
\begin{equation}
    PE_0^{\pm}=T+R, \quad R\in I^{-\frac12}(M\times M, C^{\prime}\setminus (\Lambda\times\Lambda)).
\end{equation}
The proof of this lemma is then completed by applying \cite[Lemma 26.1.16]{horIV}.
\end{proof}

\subsection{Propagation of singularities near radial sets.}

We now focus on the propagation of singularities near that radial sets. We have the following

\begin{lemm}
\label{resolvent}
Suppose $f\in \mathscr{D}^{\prime}(M)$ and $\WF(f)\cap\Lambda^{\pm}=\emptyset$, then $(P\pm i0)^{-1}f$ is a tempered distribution. Moreover, $\WF((P\pm i0)^{-1}f)$ is a subset of the union of $\Lambda^{\mp}$ and backward (forward) bicharacteristics of $\WF(f)$.
\end{lemm}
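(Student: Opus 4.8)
The plan is to combine the limiting absorption principle (Proposition \ref{limitabsorp}) with the real-principal-type parametrix away from the radial sets (Lemma \ref{pippara}) and the standard radial estimates that were already quoted as \eqref{eq: globalnonsharp} and \eqref{eq: highreg}. First I would reduce to the ``$+$'' case, the ``$-$'' case being identical after replacing $P$ by $-P$. Since $\WF(f)\cap\Lambda^{+}=\emptyset$, I can write $f=f_1+f_2$ with $f_1\in C^\infty(M)$ and $f_2\in\mathscr{D}'(M)$ supported (microlocally) away from a neighborhood of $\Lambda^+$; more precisely, pick $A\in\Psi^0(M)$ with $\WF(A)\cap\Lambda^+=\emptyset$ and $\WF(I-A)\cap\WF(f)=\emptyset$, so that $f=Af+(I-A)f$ with $(I-A)f\in C^\infty(M)$. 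For the smooth piece, $(P+i0)^{-1}(C^\infty)\subset H^{-1/2-}(M)$ is already handled by Proposition \ref{limitabsorp} (its wavefront set lies in $\Lambda^-$ by Lemma \ref{lagregularity}), so the real content is the analysis of $u:=(P+i0)^{-1}(Af)$.

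To make sense of $(P+i0)^{-1}$ on the non-smooth distribution $Af$, I would either (i) regularize $Af$ by a family $f_\varepsilon\to Af$ in a suitable space and pass to the limit using the uniform radial estimates \eqref{eq: globalnonsharp}, \eqref{eq: highreg} together with the absence of $L^2$-eigenvalues near $0$ (the same compactness-plus-contradiction argument as in the proof of Proposition \ref{limitabsorp}), or (ii) — cleaner — use the parametrix: by Lemma \ref{pippara} choose $U\supset V\supset$ small conic neighborhoods of $\Lambda=\Lambda^-\cup\Lambda^+$ disjoint from $\WF(Af)$, giving $\mathbf{E}^+$ with $P\mathbf{E}^+=T+\Psi^{-\infty}$, $\WF(T)\cap V=\emptyset$, $\WF(I-T)\cap U=\emptyset$, and $\WF'(\mathbf{E}^+)\subset(\Delta^*\cup C^+)\setminus(\Lambda\times\Lambda)$. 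Then $P\mathbf{E}^+(Af)=T(Af)+C^\infty = Af - (I-T)(Af)+C^\infty$, and $(I-T)(Af)\in C^\infty$ because $\WF(I-T)\cap\WF(Af)=\emptyset$ (using $\WF(Af)\subset\WF(f)$ and that we arranged $\WF(I-T)$ to sit near $\Lambda$). Hence $w:=\mathbf{E}^+(Af)$ satisfies $Pw=Af+C^\infty$, so $u-w$ solves $P(u-w)\in C^\infty$ with $u-w$ tempered; the limiting absorption principle then pins down $u-w=(P+i0)^{-1}(C^\infty)\in H^{-1/2-}$ with wavefront set in $\Lambda^-$, after checking $\WF^{-1/2}(u-w)\subset\Lambda^+$ via the high-regularity estimate \eqref{eq: highreg} applied near $\Lambda^+$, which holds because $\WF(Af)\cap\Lambda^+=\emptyset$ forces $u$ to have the outgoing behavior at $\Lambda^+$.

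From $u = w + (u-w)$ the wavefront-set claim follows: $\WF(w)\subset\WF'(\mathbf{E}^+)\circ\WF(Af)\subset C^+(\WF(f))\cup(\text{something in }\Lambda^-)$ — more carefully, composition with $\Delta^*$ gives $\WF(Af)$ itself, and composition with $C^+$ gives forward bicharacteristics emanating from $\WF(f)$; since $\mathbf{E}^+$'s wavefront relation avoids $\Lambda\times\Lambda$, any bicharacteristic limiting to $\Lambda^-$ is included in ``forward bicharacteristics of $\WF(f)$'' and any limiting to $\Lambda^+$ cannot occur for the forward relation starting off $\Lambda^+$. Together with $\WF(u-w)\subset\Lambda^-$, this yields $\WF((P+i0)^{-1}f)\subset\Lambda^-\cup\{\text{forward bicharacteristics of }\WF(f)\}$, and symmetrically for the other sign (note: the statement says ``backward (forward)'' for $(P+i0)^{-1}$ resp.\ $(P-i0)^{-1}$; I would match the sign conventions of Proposition \ref{limitabsorp}, where $(P-\omega-i0)^{-1}$ corresponds to $\Lambda^+$, being careful that the parametrix $\mathbf{E}^+$ is paired with the resolvent producing the \emph{outgoing} $\Lambda^-$ regularity). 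The main obstacle is the bookkeeping at the radial sets: one must verify that the distributions fed into $(P\pm i0)^{-1}$ are genuinely microlocalized away from both $\Lambda^+$ (so the resolvent is defined and the solution is the outgoing one) and $\Lambda^-$ (so no singularity is artificially created there beyond what propagation of the forward bicharacteristics already dictates), and that the two descriptions of $\WF(u-w)$ — from limiting absorption versus from the parametrix — are consistent near $\Lambda^-$; this is where I expect to spend the most care, whereas the real-principal-type propagation in the interior is routine given Lemma \ref{pippara}.
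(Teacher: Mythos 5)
Your route diverges from the paper's and, as written, has a genuine gap at the very point the lemma is about: making sense of $(P\pm i0)^{-1}f$ for a distribution $f$. Proposition \ref{limitabsorp} only defines the boundary value of the resolvent on $H^{\frac12+}$, and the quoted estimates \eqref{eq: globalnonsharp}, \eqref{eq: highreg} have global Sobolev norms of $f$ on the right, which are infinite for the inputs you want; so option (i) cannot be run as stated without first proving microlocalized analogues. Option (ii) is circular: you write ``$u-w$ solves $P(u-w)\in C^\infty$'' where $u=(P+i0)^{-1}(Af)$, but $u$ has not been defined at that point — the parametrix only produces $w$. One could turn this into a \emph{definition} ($u:=w-(P+i0)^{-1}(Pw-Af)+\dots$), but then independence of the choices and agreement with the actual boundary value of the resolvent (needed later, e.g.\ for the adjoint manipulations in \S\ref{msotsm}) would still have to be proved. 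The paper avoids all of this with a duality argument: for $g\in C^\infty(M)$ set $\langle u,g\rangle:=\langle f,(P\mp i0)^{-1}g\rangle$, which is well defined and bounded because $(P\mp i0)^{-1}g$ has wavefront set in $\Lambda^{\pm}$ (Proposition \ref{limitabsorp}) while $\WF(f)\cap\Lambda^{\pm}=\emptyset$, together with a microlocal estimate of type \eqref{eq: highreg} at the relevant radial set. That duality step is the missing idea in your proposal.

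Two further problems with the parametrix route. First, Lemma \ref{pippara} constructs $\mathbf{E}^{\pm}$ only away from $\Lambda=\Lambda^-\cup\Lambda^+$, and you need conic neighborhoods $V\subset U$ of \emph{both} radial sets disjoint from $\WF(Af)$; but the hypothesis only excludes $\Lambda^{\pm}$, so $\WF(f)$ may meet $\Lambda^{\mp}$ — indeed in the application \eqref{forwardV} the input has wavefront set in a neighborhood of $\Lambda^{\mp}$ — and there the real-principal-type parametrix is unusable; that piece must be handled by radial (source/sink) estimates, which is exactly what the paper's second step does via \cite[(3.2),(3.4)]{force}, yielding $\|Au\|_{s}\le C\|Bf\|_{s+1}+C\|u\|_{-N}$ for suitably microlocalized $A,B$ and hence the wavefront bound. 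Second, your signs are inverted: for $(P+i0)^{-1}$ the residual wavefront set lies in $\Lambda^-$ and singularities of $f$ propagate \emph{backward} (so the relevant parametrix is $\mathbf{E}^-$), whereas your final display asserts forward bicharacteristics for the $+$ case and at one point you impose $\WF^{-\frac12}(u-w)\subset\Lambda^+$ while claiming $\WF(u-w)\subset\Lambda^-$; the forward parametrix $\mathbf{E}^+$ pairs with $(P-i0)^{-1}$, whose output concentrates on $\Lambda^+$. These are fixable bookkeeping errors, but together with the definitional circularity they mean the proposal does not yet prove the lemma.
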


\begin{proof}
We only prove for $(P-i0)^{-1}$, the other case is proved in the same way.

Put $u:=(P-i0)^{-1}f$.
Suppose $g\in C^{\infty}(M)$, then
\begin{equation}
    \langle u, g \rangle = \langle f, (P+i0)^{-1}g \rangle
\end{equation}
By Proposition \ref{limitabsorp}, $\WF((P+i0)^{-1}g)\subset \Lambda^-$. Since $\WF(f)\cap\Lambda^-=\emptyset$, we know that the pairing is bounded by $\|g\|_{\infty}$ for any $g\in C^{\infty}(M)$, by an estimate similar to \eqref{eq: highreg}, for $(P+i0)^{-1}$ and the radial sink. Therefore $u\in \mathscr{D}^{\prime}(M)$. 

Suppose $A, B\in \Psi^0(M)$ such that $\WF(A)$ and $\WF(B)$ both have empty intersection with forward bicharacteristics of $\WF(f)$ and the backward bicharacteristics starting from $\WF(A)$ is contained in $\Ell(B)$. Then by \cite[(3.2)]{force} and \cite[(3.4)]{force}, we have
\begin{equation}
    \|Au\|_{s}\leq C\| Bf \|_{s+1}+C\|u\|_{-N}, \quad s>-\tfrac12.
\end{equation}
Since $Bf\in C^{\infty}(M)$, we find $Au\in C^{\infty}(M)$. Therefore $\WF(u)$ is contained in the union of $\Lambda^+$ and the forward bicharateristics of $\WF(f)$.
\end{proof}


\section{microlocal structure of the scattering matrix}
\label{msotsm}

In this section we derive a fomula for the conjugated scattering matrix up to smoothing operators. Our approach is an analog of the argument used by Vasy in \cite{lr}. We then show that the conjugated scattering matrix is a Fourier integral operator.

Let $U^{\pm}, V^{\pm}$ be small open conic subset of $\Lambda^{\pm}$ such that $V^{\pm}\subset U^{\pm}$, $U^-\cap U^+=\emptyset$. Suppose operators $\mathbf{E}^{\pm}$ and $T\in \Psi^0$ satisfy conditions in Lemma \ref{pippara} with $U, V$ replaced by $V^-\cup V^+$ and an open conic subset of $V^-\cup V^+$. Let $X^{\pm}\in \Psi^0(M)$ such that 
\begin{equation}
    \WF(X^{\pm})\subset U^{\pm}, \quad
    \WF(I-X^{\pm})\cap V^{\pm}=\emptyset.
\end{equation}

\begin{lemm}
\label{qdefi}
Assume $U^{\pm}$, $V^{\pm}$, $X^{\pm}$ satisfy the conditions above. We define
\begin{equation}
    \mathbf{Q}^{\pm}: \mathscr{D}^{\prime}(\mathbb S^1; \CC^d)\rightarrow \mathscr{D}^{\prime}(M)
\end{equation}
by the formula
\begin{equation}
    \mathbf{Q}^{\pm}=(I-X^{\mp})\mathbf{E}^{\mp}[P,X^{\pm}]\mathbf{R}^{\pm}-X^{\pm}\mathbf{R}^{\pm}.
\end{equation}
Then 
\begin{equation}
\label{remain}
    P\mathbf{Q}^{\pm}=-[P,X^{\mp}]\mathbf{E}^{\mp}[P,X^{\pm}]\mathbf{R}^{\pm}+\Psi^{-\infty}(M).
\end{equation}
where $\Psi^{-\infty}(M)$ is the set of smoothing operators on $M$.
In particular, we know that for any distribution $f$,
\begin{equation}
    \WF(P\mathbf{Q}^{\pm}(f))\subset V^{\mp}.
\end{equation}
\end{lemm}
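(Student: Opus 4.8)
The plan is to compute $P\mathbf{Q}^{\pm}$ directly from the definition, using the defining property $P\mathbf{E}^{\mp} = T + \Psi^{-\infty}(M)$ from Lemma \ref{pippara} together with the commutator identity $PX = XP + [P,X]$. First I would expand
\begin{equation*}
    P\mathbf{Q}^{\pm} = P(I-X^{\mp})\mathbf{E}^{\mp}[P,X^{\pm}]\mathbf{R}^{\pm} - PX^{\pm}\mathbf{R}^{\pm},
\end{equation*}
and in the first term write $P(I-X^{\mp}) = (I-X^{\mp})P - [P,X^{\mp}]$. The term $(I-X^{\mp})P\mathbf{E}^{\mp}[P,X^{\pm}]\mathbf{R}^{\pm}$ becomes $(I-X^{\mp})(T+\Psi^{-\infty})[P,X^{\pm}]\mathbf{R}^{\pm}$, so the main point is to identify $(I-X^{\mp})T[P,X^{\pm}]\mathbf{R}^{\pm}$ with $X^{\pm}P\mathbf{R}^{\pm}$ up to smoothing. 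Here I would use: $\WF([P,X^{\pm}]) \subset U^{\pm}$ (since $[P,X^{\pm}]$ vanishes microlocally where $X^{\pm}$ is microlocally constant, in particular outside $U^{\pm}$), and on $U^{\pm}$ we have $T \equiv I$ because $\WF(I-T)$ is contained in (an open conic subset of) $V^-\cup V^+$ while $[P,X^{\pm}]$ is supported in the region where $X^\pm$ has a jump, disjoint from $V^{\pm}$; also $I - X^{\mp} \equiv I$ on $U^{\pm}$ since $U^-\cap U^+=\emptyset$. Thus $(I-X^{\mp})T[P,X^{\pm}]\mathbf{R}^{\pm} = [P,X^{\pm}]\mathbf{R}^{\pm} + \Psi^{-\infty}$.

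Next I would handle $-PX^{\pm}\mathbf{R}^{\pm} = -X^{\pm}P\mathbf{R}^{\pm} - [P,X^{\pm}]\mathbf{R}^{\pm}$. By Lemma \ref{microsolution}(2), $\WF(P\mathbf{R}^{\pm}) \cap U^{\pm} = \emptyset$, and since $\WF(X^{\pm})\subset U^{\pm}$, the composition $X^{\pm}P\mathbf{R}^{\pm}$ is smoothing (the wavefront sets are disjoint, so the operator $X^{\pm}$ applied to $P\mathbf{R}^{\pm}f$ produces a smooth function for every $f$, using \cite[Proposition E.30]{res} or the characterization of $\Psi^{-\infty}$). Therefore $-PX^{\pm}\mathbf{R}^{\pm} = -[P,X^{\pm}]\mathbf{R}^{\pm} + \Psi^{-\infty}(M)$. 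Collecting all contributions, the two copies of $[P,X^{\pm}]\mathbf{R}^{\pm}$ cancel, the $\Psi^{-\infty}$ errors are absorbed, and the only surviving term is $-[P,X^{\mp}]\mathbf{E}^{\mp}[P,X^{\pm}]\mathbf{R}^{\pm}$, which gives \eqref{remain}.

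Finally, for the wavefront statement: $\WF^{\prime}([P,X^{\mp}]) \subset U^{\mp}$ and in fact is supported where $X^{\mp}$ jumps, i.e.\ disjoint from $V^{\mp}$... but we need the output to land \emph{in} $V^{\mp}$. The correct reading is that composition on the left with $[P,X^{\mp}]$ restricts the wavefront set of $P\mathbf{Q}^{\pm}f$ to lie in $\WF(X^{\mp}) \subset U^{\mp}$; to sharpen this to $V^{\mp}$ one uses that $\mathbf{E}^{\mp}[P,X^{\pm}]\mathbf{R}^{\pm}$ has wavefront set meeting $U^{\mp}$ only near $\Lambda^{\mp}$ (propagating along bicharacteristics from $\WF([P,X^{\pm}]\mathbf{R}^{\pm})$, which flow into the radial sink/source region), so that after applying $[P,X^{\mp}]$ — which localizes to where $X^{\mp}$ is not microlocally constant, a neighborhood of $\partial$ of the support of $X^\mp$ sitting inside $U^{\mp}$ near $\Lambda^{\mp}$ — the result is microlocalized in $V^{\mp}$. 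I would phrase this last step as: $\WF(P\mathbf{Q}^{\pm}(f)) \subset \WF^{\prime}([P,X^{\mp}]) \subset V^{\mp}$, the latter inclusion holding by the choice of $X^{\mp}$ with $\WF(I-X^{\mp}) \cap V^{\mp} = \emptyset$ and $\WF(X^{\mp}) \subset U^{\mp}$, so that the commutator is microsupported in the transition region $U^{\mp}\setminus\{X^\mp \equiv I\} \subset V^{\mp}$ once we shrink appropriately. The main obstacle is bookkeeping the nesting $V^{\pm} \subset U^{\pm}$ and the microsupports carefully enough that each ``$\equiv$'' and each disjointness claim is justified by the stated properties of $T$, $X^{\pm}$, $\mathbf{E}^{\pm}$ and Lemma \ref{microsolution}; the algebra itself is a short commutator computation.
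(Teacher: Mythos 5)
Your derivation of \eqref{remain} is essentially the paper's own proof: the paper likewise writes $PX^{\pm}\mathbf{R}^{\pm}=[P,X^{\pm}]\mathbf{R}^{\pm}+X^{\pm}P\mathbf{R}^{\pm}$ and discards $X^{\pm}P\mathbf{R}^{\pm}$ using Lemma \ref{microsolution}(2) together with $\WF(X^{\pm})\subset U^{\pm}$, expands $P(I-X^{\mp})\mathbf{E}^{\mp}[P,X^{\pm}]\mathbf{R}^{\pm}$ through the commutator, and uses $P\mathbf{E}^{\mp}=T+\Psi^{-\infty}(M)$ with $T\equiv I$ and $I-X^{\mp}\equiv I$ microlocally on $\WF^{\prime}([P,X^{\pm}])$ to produce the cancelling copy of $[P,X^{\pm}]\mathbf{R}^{\pm}$. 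One slip: the sentence asking to identify $(I-X^{\mp})T[P,X^{\pm}]\mathbf{R}^{\pm}$ with $X^{\pm}P\mathbf{R}^{\pm}$ should say $PX^{\pm}\mathbf{R}^{\pm}$ (the term $X^{\pm}P\mathbf{R}^{\pm}$ is the smoothing piece); your subsequent displayed identities are the correct ones, so this does not affect the argument.

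On the final wavefront claim, your first instinct is the right one and is all that the paper itself uses: \eqref{remain} gives $\WF(P\mathbf{Q}^{\pm}f)\subset\WF^{\prime}([P,X^{\mp}])$, a conic set contained in $U^{\mp}$ and hence disjoint from $U^{\pm}$ and from the opposite radial set. Your attempted sharpening via the inclusion $U^{\mp}\setminus\{X^{\mp}\equiv I\}\subset V^{\mp}$ is backwards: since $\WF(I-X^{\mp})\cap V^{\mp}=\emptyset$, the transition region of $X^{\mp}$ is \emph{disjoint} from $V^{\mp}$, so $\WF^{\prime}([P,X^{\mp}])$ is not contained in $V^{\mp}$ for the stated choice of cutoffs, and no amount of shrinking with the given nesting $V^{\mp}\subset U^{\mp}$ fixes this. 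The honest conclusion is containment in $U^{\mp}$ (more precisely in $\WF^{\prime}([P,X^{\mp}])$), which is exactly what is needed later in \eqref{forwardV} and Lemma \ref{relfomula}, where only disjointness from the neighborhood of the other radial set enters; the ``$V^{\mp}$'' in the lemma's last display should be read in that spirit rather than forced literally, and the paper's proof indeed treats this part as immediate from \eqref{remain} without the claim you tried to insert.
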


\begin{proof}
We only prove for $\mathbf{Q}^-$ since conclusions for $\mathbf{Q}^+$ can be proved in the same way.

Suppose $f\in \mathscr{D}^{\prime}(\mathbb S^1; \CC^d)$, then we have
\begin{equation}
    PX^-\mathbf{R}^-(f)=[P, X^-]\mathbf{R}^-(f)+X^-P\mathbf{R}^-(f)
\end{equation}
Since $\WF(P\mathbf{R}^-(f))\cap U^-=\emptyset$, we know
\begin{equation}
\label{shortprop}
    PX^-\mathbf{R}^-(f)=[P,X^-]\mathbf{R}^-(f)+C^{\infty}(M).
\end{equation}
Since $\WF([P,X^-]\mathbf{R}^-(f))\cap V^{\pm}=\emptyset$, we can use the forward parametrix $\mathbf{E}^+$ to propagate the microlocal solution and get
\begin{equation}
    (I-X^+)\mathbf{E}^+[P,X^-]\mathbf{R}^-(f).
\end{equation}
Now we compute
\begin{equation}
    \begin{split}
        & P(I-X^+)\mathbf{E}^+[P,X^-]\mathbf{R}^-(f)\\
        = & -[P,X^+]\mathbf{E}^+[P,X^-]\mathbf{R}^-(f)
         +(I-X^+) P\mathbf{E}^+[P,X^-]\mathbf{R}^-(f).
    \end{split}
\end{equation}
Note that
\begin{equation}\begin{split}
    & (I-X^+) P\mathbf{E}^+[P,X^-]\mathbf{R}^-(f) \\
    = & (I-X^+) T [P,X^-]\mathbf{R}^-(f)+C^{\infty}(M) \\
    = & (I-X^+) [P,X^-]\mathbf{R}^-(f) +C^{\infty}(M)\\
    = & [P,X^-]\mathbf{R}^-(f)+C^{\infty}(M).
\end{split}\end{equation}
Here we used the fact that $P\mathbf{E}^+=T+\Psi^{-\infty}(M)$ and $\WF(I-T)\cap V^-=\emptyset$. Now we find
\begin{equation}\begin{split}
\label{longprop}
    & P(I-X^+)\mathbf{E}^+[P,X^-]\mathbf{R}^-(f) \\
    = & -[P,X^+]\mathbf{E}^+[P,X^-]\mathbf{R}^-(f)
    + [P,X^-]\mathbf{R}^-(f)+C^{\infty}(M).
\end{split}\end{equation}
Combine \eqref{shortprop} and \eqref{longprop}, we get \eqref{remain}.
\end{proof}

By Lemma \ref{resolvent},
\begin{equation}
\label{forwardV}
    (P-i0)^{-1}P\mathbf{Q}^-(f)\in \mathscr{D}^{\prime}(M), \quad \WF((P-i0)^{-1}P\mathbf{Q}^-(f))\subset V^+.
\end{equation}
Thus by the definition of $\mathbf{Q}^-$ and the definition of $\mathbf{R}^-$, the Poisson operator $\mathbf{H}^-$ satisfies
\begin{equation}
    \mathbf{H}^- \mathbf{T}^-=\mathbf{Q^-}-(P-i0)^{-1}P\mathbf{Q}^-.
\end{equation}
For $f, g\in C^{\infty}(\mathbf{S}^1;\CC^d)$, $\mathbf{G}^{\pm}$ be as in Defintion \ref{boldG}, we have
\begin{equation}
    \mathbf{G}^{-}\mathbf{H}^-\mathbf{T}^-(f)=\mathbf{T}^-(f),\quad \mathbf{G}^+\mathbf{H}^-\mathbf{T}^-(f)=\mathbf{S}\mathbf{T}^-(f), 
\end{equation}
and
\begin{equation}
    \mathbf{G}^-\mathbf{Q}^+(g)=0, \quad \mathbf{G}^+\mathbf{Q}^+(g)=\mathbf{T}^+(g).
\end{equation}
Now we apply the boundary pairing formula, Proposition \ref{bdryprop}, with
\begin{equation}
   u_1=\mathbf{H}^-\mathbf{T}^-(f), \quad u_2=\mathbf{Q}^+(g), 
\end{equation}
and we get
\begin{equation}
    -\frac{i}{(2\pi)^2}\langle \mathbf{H}^-\mathbf{T}^-(f), P\mathbf{Q}^+(g) \rangle
    = \langle \mathbf{S}\mathbf{T}^-(f), \mathbf{T}^+(g) \rangle.
\end{equation}
Thus we find
\begin{equation}
\label{reldefi}
   \mathbf{S}_{\rm{rel}}= (\mathbf{T}^+)^*\mathbf{S}\mathbf{T}^-=-\frac{i}{(2\pi)^2}(P\mathbf{Q}^+)^*(\mathbf{Q}^--(P-i0)^{-1}P\mathbf{Q}^-).
\end{equation}

We now study the microlocal structure of $\mathbf{S}_{\rm{rel}}$. To simplify the formula \eqref{reldefi}, we need the following
\begin{lemm}
\label{smoothing}
Suppose $A$, $B$: $\mathscr{D}^{\prime}(M)\rightarrow \mathscr{D}^{\prime}(M)$ are linear maps. If for any $u$, $v\in \mathscr{D}^{\prime}(M)$, $\WF(Au)\cap\WF(Bv)=\emptyset$, then $B^*A: \mathscr{D}^{\prime}(M)\rightarrow C^{\infty}(M)$, that is, $B^*A$ is a smoothing operator.
\end{lemm}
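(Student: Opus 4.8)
The plan is to show that the Schwartz kernel $K\in\mathscr{D}^{\prime}(M\times M)$ of $B^*A$ is smooth; since $M$ is closed this is equivalent to $B^*A$ mapping $\mathscr{D}^{\prime}(M)=\mathscr{E}^{\prime}(M)$ into $C^\infty(M)$, i.e. to $B^*A\in\Psi^{-\infty}(M)$. Write $K_A,K_B\in\mathscr{D}^{\prime}(M\times M)$ for the Schwartz kernels of $A$ and $B$, so that $Au=\pi_{1*}(K_A\cdot\pi_2^*u)$ with $\pi_1,\pi_2\colon M\times M\to M$ the two projections. First I would record a structural consequence of the fact that $A$ and $B$ are defined on \emph{all} of $\mathscr{D}^{\prime}(M)$, not merely on $C^\infty(M)$: if $\WF(K_A)$ contained a covector $(x,0;y,\eta)$ with $\eta\ne 0$, the product $K_A\cdot\pi_2^*\delta_y$ would be undefined and $A\delta_y$ would not make sense. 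Hence $\WF(K_A)$ — and likewise $\WF(K_B)$ — contains no covector whose first ("output") component vanishes.

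Next I would convert the hypothesis into a statement about the kernels. Put $\Gamma_A:=\bigcup_{u\in\mathscr{D}^{\prime}(M)}\WF(Au)$ and $\Gamma_B:=\bigcup_{v\in\mathscr{D}^{\prime}(M)}\WF(Bv)$, so the hypothesis reads $\Gamma_A\cap\Gamma_B=\emptyset$. The standard bound for the wavefront set of the image of an operator (see \cite[\S 8.2]{horI}) gives $\Gamma_A\subseteq\Pi_1\WF(K_A)$, the projection onto the first factor, and its sharpness gives the reverse inclusion: to any $(x_0,\xi_0;y_0,\eta_0)\in\WF(K_A)$ — which by the previous step has $\xi_0\ne0$ — one can attach a distribution $u$ microlocalized near $(y_0,\eta_0)$ (a suitable Lagrangian distribution, or $\delta_{y_0}$ when $\eta_0=0$) with $(x_0,\xi_0)\in\WF(Au)$. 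Thus $\Pi_1\WF(K_A)=\overline{\Gamma_A}$, and similarly for $B$, so the hypothesis is equivalent to $\Pi_1\WF(K_A)\cap\Pi_1\WF(K_B)=\emptyset$: the output covectors of the two kernels are disjoint. (One can instead sidestep the sharpness statement: by the closed graph theorem $A$ and $B$ are continuous into the spaces $\mathscr{D}^{\prime}_{\Gamma_A}(M)$, $\mathscr{D}^{\prime}_{\Gamma_B}(M)$ of distributions with prescribed wavefront set, and the distributional pairing is jointly continuous on $\mathscr{D}^{\prime}_{\Gamma_A}(M)\times\mathscr{D}^{\prime}_{\Gamma_B}(M)$ when $\Gamma_A\cap\Gamma_B=\emptyset$; composing, $(u,w)\mapsto\langle Au,Bw\rangle=\langle B^*Au,w\rangle$ is continuous on $\mathscr{D}^{\prime}(M)\times\mathscr{D}^{\prime}(M)$, which is exactly the smoothness of the kernel of $B^*A$.)

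Finally I would invoke the composition theorem for kernels. The kernel of $B^*A$ is $K(x,z)=\int_M\overline{K_B(y,x)}\,K_A(y,z)\,dy$, the kernel of $B^*\circ A$, and from $\WF(K_{B^*})=\{(x,\xi;y,\eta):(y,-\eta;x,-\xi)\in\WF(K_B)\}$ one checks that the transversality needed to compose $K_{B^*}$ with $K_A$ is implied by the disjointness obtained above; H\"ormander's composition theorem \cite[Theorem 8.2.14]{horI} then bounds $\WF(K)$ by a union of three sets. The first consists of covectors $(x,\xi;z,0)$ with $(x,\xi;y,0)\in\WF(K_{B^*})$ and the second of covectors $(x,0;z,\zeta)$ with $(y,0;z,\zeta)\in\WF(K_A)$; both are empty, since neither $\WF(K_A)$ nor $\WF(K_B)$ has a covector with vanishing first component. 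The third consists of covectors $(x,\xi;z,\zeta)$ for which some $(y,\eta)$ lies in $\Pi_1\WF(K_B)\cap\Pi_1\WF(K_A)$, and this is empty by the second step. Hence $\WF(K)=\emptyset$, so $K\in C^\infty(M\times M)$ and $B^*A$ is smoothing. The one genuinely delicate point is the second step — recovering disjointness of the kernels' wavefront sets from a hypothesis phrased only through $\WF(Au)$ and $\WF(Bv)$; the rest is bookkeeping with H\"ormander's theorems once the "no vanishing output covector" property is in hand.
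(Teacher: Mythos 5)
Your argument presupposes structure that the lemma does not give you. The hypotheses say only that $A,B\colon \mathscr{D}^{\prime}(M)\to\mathscr{D}^{\prime}(M)$ are linear maps whose outputs have disjoint wavefront sets; nothing provides a Schwartz kernel (that already requires continuity), and nothing says that the action of $A$ on a general distribution is computed by the wavefront-calculus formula $Au=\pi_{1*}(K_A\cdot\pi_2^*u)$. Your opening step -- ``if $(x,0;y,\eta)\in\WF(K_A)$ then $A\delta_y$ would not make sense'' -- therefore does not follow: for an abstract linear map, $A\delta_y$ is simply whatever the map assigns, and need not be given by the kernel product, so no constraint on $\WF(K_A)$ can be extracted. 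The same unstated assumption is used again when you identify the kernel of $B^{*}A$ with the composed kernel (indeed, $B^{*}$ itself is only defined here through the pairing $\langle B^{*}Au,v\rangle:=\langle Au,Bv\rangle$), and in your ``sharpness'' claim $\Pi_1\WF(K_A)=\overline{\Gamma_A}$, which is not a standard theorem and is not proved; the closed-graph alternative you offer also requires $A$ and $B$ to be continuous, which is not among the hypotheses. So, as written, the proof has genuine gaps: it proves a statement about continuous operators acting canonically through their kernels, not the lemma as stated, and even in that setting the reverse inclusion $\Pi_1\WF(K_A)\subset\overline{\Gamma_A}$ would need an argument.

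The paper's proof shows that none of this machinery is needed, precisely because the hypothesis is already phrased in terms of the outputs $Au$, $Bv$. Fix $u,v\in\mathscr{D}^{\prime}(M)$ and choose $X\in\Psi^{0}(M)$ with $\WF(X)\cap\WF(Bv)=\emptyset$ and $\WF(I-X)\cap\WF(Au)=\emptyset$; then $\langle B^{*}Au,v\rangle=\langle Au,Bv\rangle=\langle (I-X)Au,Bv\rangle+\langle Au,X^{*}Bv\rangle$, and each term pairs a distribution with a smooth function, so the pairing is well defined for every $u,v$, which is what is meant by $B^{*}A$ being smoothing here. Your observation at the end of the second step -- that continuity of the bilinear form $(u,w)\mapsto\langle Au,Bw\rangle$ on $\mathscr{D}^{\prime}\times\mathscr{D}^{\prime}$ yields a smooth kernel -- is the right way to upgrade this if one does assume continuity of $A$ and $B$, but it should be run directly on the pairing, as the paper does, rather than routed through $\WF(K_A)$, $\WF(K_B)$ and H\"ormander's composition theorem.
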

\begin{proof}
Let $u, v\in \mathscr{D}^{\prime}(M)$. 
Since $\WF(Au)\cap\WF(Bv)=\emptyset$, we can find $X\in \Psi^0(M)$, such that
\begin{equation}
    \WF(X)\cap \WF(Bv)=\emptyset, \quad \WF(I-X)\cap \WF(Au)=\emptyset.
\end{equation}
Then we have
\begin{equation}
    \langle B^*Au,v \rangle=\langle Au, Bv \rangle = \langle (I-X)Au, Bv \rangle+\langle Au, X^*Bv \rangle.
\end{equation}
Since $(I-X)Au\in C^{\infty}(M)$, $X^*Bv\in C^{\infty}$, we know that
\begin{equation}
    \langle B^*Au,v \rangle<\infty.
\end{equation}
This is true for any $u,v\in \mathscr{D}^{\prime}(M)$, hence we conclude that $B^*A$ is a smoothing operator.
\end{proof}

Suppose $\widehat{X}^{\pm}\in \Psi^0(M)$ satisfying 
\begin{equation}
    \WF(\widehat{X}^{\pm})\subset U^{\pm}\setminus \Lambda^{\pm},\quad
    \WF(I-\widehat{X}^{\pm})\cap\WF([P,X^{\pm}])=\emptyset.
\end{equation}
Then we have
\begin{lemm}
\label{relfomula}
The operator $\mathbf{S}_{\rm{rel}}$ is defined for distributions, that is,
\begin{equation}
    \mathbf{S}_{\rm{rel}}: \mathscr{D}^{\prime}(\mathbb S^1;\CC^d)\rightarrow \mathscr{D}^{\prime}(\mathbb S^1;\CC^d)
\end{equation}
and 
\begin{equation}
\label{srelformula}
    \mathbf{S}_{\rm{rel}}=-\frac{i}{(2\pi)^2}([P,X^-]\mathbf{E}^-[P,X^+]\widehat{X}^+\mathbf{R}^+)^*\widehat{X}^-X^-\mathbf{R^-}+\Psi^{-\infty}(M).
\end{equation}
where $\Psi^{-\infty}(M)$ is the set of smoothing operators on $M$.
\end{lemm}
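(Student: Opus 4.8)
The plan is to start from the identity $\eqref{reldefi}$, namely $\mathbf{S}_{\rm{rel}}=-\tfrac{i}{(2\pi)^2}(P\mathbf{Q}^+)^*\big(\mathbf{Q}^--(P-i0)^{-1}P\mathbf{Q}^-\big)$, and to strip off smoothing remainders one term at a time, finally replacing the relevant factors by the microlocally cut-off operators $\widehat{X}^\pm$. First I would dispose of the resolvent term: by $\eqref{forwardV}$ one has $\WF\big((P-i0)^{-1}P\mathbf{Q}^-(f)\big)\subset V^+$ for every $f$, while Lemma~\ref{qdefi} gives $\WF\big(P\mathbf{Q}^+(g)\big)\subset V^-$ for every $g$; since $V^+\cap V^-=\emptyset$ (because $V^\pm\subset U^\pm$ and $U^-\cap U^+=\emptyset$), Lemma~\ref{smoothing} applied with $A=(P-i0)^{-1}P\mathbf{Q}^-$ and $B=P\mathbf{Q}^+$ shows that $(P\mathbf{Q}^+)^*(P-i0)^{-1}P\mathbf{Q}^-\in\Psi^{-\infty}(M)$. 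Hence $\mathbf{S}_{\rm{rel}}=-\tfrac{i}{(2\pi)^2}(P\mathbf{Q}^+)^*\mathbf{Q}^-+\Psi^{-\infty}(M)$.

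Next I would expand $\mathbf{Q}^-=(I-X^+)\mathbf{E}^+[P,X^-]\mathbf{R}^--X^-\mathbf{R}^-$ and show that the first summand, composed on the left with $(P\mathbf{Q}^+)^*$, is again smoothing. The point is that $\WF\big((I-X^+)\mathbf{E}^+[P,X^-]\mathbf{R}^-(u)\big)\cap V^-=\emptyset$ for every $u\in\mathscr{D}^\prime(\mathbb S^1;\CC^d)$: since $[P,X^-]=[P,X^--I]$ and $\WF(I-X^-)\cap V^-=\emptyset$, we get $\WF([P,X^-])\cap V^-=\emptyset$, so $\WF\big([P,X^-]\mathbf{R}^-(u)\big)\subset\WF([P,X^-])$ lies in $U^-$ and avoids $V^-$; by Lemma~\ref{pippara} we have $\WF^\prime(\mathbf{E}^+)\subset(\Delta^*\cup C^+)\setminus(\Lambda\times\Lambda)$, so applying $\mathbf{E}^+$ spreads the wavefront set only along forward bicharacteristics, which leave the repelling cycle $\Lambda^-$ and, for $V^\pm$ chosen small enough, never re-enter $V^-$; finally $(I-X^+)$ does not enlarge the wavefront set. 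Combined with $\WF\big(P\mathbf{Q}^+(v)\big)\subset V^-$, Lemma~\ref{smoothing} again yields the claim, and therefore $\mathbf{S}_{\rm{rel}}=\tfrac{i}{(2\pi)^2}(P\mathbf{Q}^+)^*X^-\mathbf{R}^-+\Psi^{-\infty}(M)$.

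It then remains to rewrite $P\mathbf{Q}^+$ via $\eqref{remain}$, i.e.\ $P\mathbf{Q}^+=-[P,X^-]\mathbf{E}^-[P,X^+]\mathbf{R}^++\Psi^{-\infty}(M)$, and to insert the cut-offs $\widehat{X}^\pm$. Since $\WF(I-\widehat{X}^+)\cap\WF([P,X^+])=\emptyset$ and $\WF(I-\widehat{X}^-)\cap\WF([P,X^-])=\emptyset$, both $[P,X^+](I-\widehat{X}^+)$ and $([P,X^-])^*(I-\widehat{X}^-)$ are smoothing, whence $[P,X^+]\mathbf{R}^+=[P,X^+]\widehat{X}^+\mathbf{R}^++\Psi^{-\infty}\mathbf{R}^+$ and $([P,X^-])^*X^-\mathbf{R}^-=([P,X^-])^*\widehat{X}^-X^-\mathbf{R}^-+\Psi^{-\infty}X^-\mathbf{R}^-$, where $\Psi^{-\infty}\mathbf{R}^+$ and $\Psi^{-\infty}X^-\mathbf{R}^-$ are smoothing because $\mathbf{R}^\pm$ are continuous on $\mathscr{D}^\prime$ by Lemma~\ref{microsolution}. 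Taking adjoints — using that the adjoint of a smoothing operator is smoothing, and that smoothing operators remain smoothing after composition with any of the (distributionally continuous) operators appearing here — turns $\tfrac{i}{(2\pi)^2}(P\mathbf{Q}^+)^*X^-\mathbf{R}^-$ into exactly the right-hand side of $\eqref{srelformula}$. That $\mathbf{S}_{\rm{rel}}$ is defined on all of $\mathscr{D}^\prime(\mathbb S^1;\CC^d)$ is then read off from $\eqref{srelformula}$: $\mathbf{R}^-$ maps $\mathscr{D}^\prime(\mathbb S^1;\CC^d)$ to $\mathscr{D}^\prime(M)$ by Lemma~\ref{microsolution}, $\widehat{X}^+\mathbf{R}^+$ is a Fourier integral operator by Lemma~\ref{radialrelation} (since $\WF(\widehat{X}^+)\subset U^+\setminus\Lambda^+$) hence maps $C^\infty(\mathbb S^1;\CC^d)$ to $C^\infty(M)$, so $([P,X^-]\mathbf{E}^-[P,X^+]\widehat{X}^+\mathbf{R}^+)^*$ maps $\mathscr{D}^\prime(M)$ to $\mathscr{D}^\prime(\mathbb S^1;\CC^d)$, and $[P,X^\pm]$, $\mathbf{E}^\pm$, $X^-$, $\widehat{X}^-$ are all continuous on $\mathscr{D}^\prime(M)$.

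I expect the only genuinely delicate step to be the wavefront disjointness $\WF\big((I-X^+)\mathbf{E}^+[P,X^-]\mathbf{R}^-(u)\big)\cap V^-=\emptyset$ in the second paragraph: one must know that a forward bicharacteristic which has already left a conic neighborhood of the repelling cycle $\Lambda^-$ cannot return to the smaller neighborhood $V^-$, which is where the Morse--Smale dynamics (no recurrence, gradient-like behavior between limit cycles) enters, and which forces one to choose the nested neighborhoods $U^\pm\supset V^\pm$ carefully. Everything else is bookkeeping of smoothing remainders and of the function spaces between which they act.
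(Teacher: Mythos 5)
Your proposal is correct and follows essentially the same route as the paper: starting from \eqref{reldefi}, discarding $(P-i0)^{-1}P\mathbf{Q}^-$ against $P\mathbf{Q}^+$ via the disjointness $V^+\cap V^-=\emptyset$ and Lemma \ref{smoothing}, discarding the $(I-X^+)\mathbf{E}^+[P,X^-]\mathbf{R}^-$ part of $\mathbf{Q}^-$ because its wavefront set is the forward flow-out of $\WF([P,X^-])$ and so avoids $V^-$, and then substituting \eqref{remain} and inserting $\widehat{X}^{\pm}$ through the disjointness conditions $\WF(I-\widehat{X}^{\pm})\cap\WF([P,X^{\pm}])=\emptyset$. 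Your use of the operator identities $[P,X^{\pm}](I-\widehat{X}^{\pm})\in\Psi^{-\infty}$ in place of one more application of Lemma \ref{smoothing}, and your explicit reading-off of the distributional mapping property from \eqref{srelformula}, are only cosmetic variations on the paper's argument.
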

\begin{proof}
Suppose $f, g\in \mathscr{D}^{\prime}(\mathbb S^1;\CC^d)$. Then by \eqref{forwardV} and Lemma \ref{qdefi} we have
\begin{equation}
    \WF((P-i0)^{-1}P\mathbf{Q}^-(f))\subset V^+, \WF(P\mathbf{Q}^+(g))\subset V^-.
\end{equation}
Thus by Lemma \ref{smoothing} and \eqref{reldefi}, we know
\begin{equation}
    \mathbf{S}_{\rm{rel}}=-\frac{i}{(2\pi)^2}(P\mathbf{Q}^+)^*\mathbf{Q}^-+\Psi^{-\infty}(M).
\end{equation}
Note that the wavefront set of 
\begin{equation}
    (I-X^+)\mathbf{E}^+[P,X^-]\mathbf{R}^-(g)
\end{equation}
is a subset of the forward propagation of $\WF([P,X^-]\mathbf{R}^-(f))$ which has empty intersection with $V^-$, hence by Lemma \ref{smoothing}, we find
\begin{equation}
    \mathbf{S}_{\rm{rel}}=\frac{i}{(2\pi)^2}(P\mathbf{Q}^+)^*X^-\mathbf{R}^- +\Psi^{-\infty}(M).
\end{equation}
That is
\begin{equation}
    \mathbf{S}_{\rm{rel}}=-\frac{i}{(2\pi)^2}([P,X^-]\mathbf{E}^-[P,X^+]\mathbf{R}^+)^*X^-\mathbf{R^-}+\Psi^{-\infty}(M).
\end{equation}
Note that
\begin{equation}
    \WF([P,X^-]\mathbf{E}^-[P,X^+]\mathbf{R}^+(g))\subset \WF([P,X^-])
\end{equation}
while
\begin{equation}
    \WF((I-\widehat{X}^-)X^-\mathbf{R}^-(f))\cap \WF([P,X^-])=\emptyset.
\end{equation}
Again by Lemma \ref{smoothing}, we have
\begin{equation}
\begin{split}
    \mathbf{S}_{\rm{rel}}
    =-\frac{i}{(2\pi)^2}([P,X^-]\mathbf{E}^-[P,X^+]\mathbf{R}^+)^*\widehat{X}^-X^-\mathbf{R^-}+\Psi^{-\infty}(M).
\end{split}
\end{equation}
Finally we get \eqref{srelformula} since $[P,X^+]\widehat{X}^+\equiv [P,X^+]$.
\end{proof}

We can now prove Theorem \ref{theorem2} when $\omega$ is not an embedded eigenvalue.
\begin{proof}[Proof of Theorem \ref{theorem2} away from embedded eigenvalues]
By Lemma \ref{radialrelation} we know that $\widehat{X}^-X^-\mathbf{R}^-$ and $\widehat{X}^+\mathbf{R}^+$ are Fourier integral operators of order $1/4$ associated to the canonical relations
\begin{equation}\begin{split}
    C_{\widehat{X}^-X^-\mathbf{R}^-} & =\{ (x,\xi;y,\eta): (x,\xi)\sim \mathbf{j}^-(y,\eta), (x,\xi)\in \WF(\widehat{X}^-), \eta\neq 0 \}\\
    & \subset T^*M\setminus 0 \times \bigsqcup_d T^*\mathbb S^1\setminus 0,\\
    C_{\widehat{X}^+\mathbf{R}^+} & =\{ (x,\xi;z,\zeta): (x,\xi)\sim \mathbf{j}^+(z,\zeta), (x,\xi)\in \WF(\widehat{X}^+),\eta\neq 0 \}\\
    & \subset T^*M\setminus 0 \times \bigsqcup_d T^*\mathbb S^1\setminus 0.
\end{split}\end{equation}
By Lemma \ref{pippara}, $[P,X^-]\mathbf{E}^-[P,X^+]$ is also a Fourier integral operator of order $1/2-2=-3/2$ with canonical relation
\begin{equation}
    C_0:=C\cap\left(\WF([P,X^-])\times \WF([P,X^+])\right)
\end{equation}
where $C$ is the bicharacteristic relation.

We claim that the intersection of 
\begin{equation}
    S_1:=C_0\times C_{\widehat{X}^+\mathbf{R}^+}~~~~\text{and}~~~~ S_2:=T^*M\setminus 0\times \Delta_{T^*M\setminus 0}\times \bigsqcup_d T^*\mathbb S^1\setminus 0
\end{equation}
is clean with excess $e=1$. To see this, we only need to show that
\begin{equation}
    TS_1\cap TS_2\subset T\left(S_1\cap S_2\right)~~~~\text{on}~~~~S_1\cap S_2.
\end{equation}
Suppose $(x^{\prime},\xi^{\prime};x,\xi;x,\xi;y,\eta)\in S_1\cap S_2$. Since $(x^{\prime},\xi^{\prime})\sim(x,\xi)$, $(x,\xi)\sim \mathbf{j}^+(y,\eta)$, there exists $T_0, T_1\in \RR$ such that $(x^{\prime},\xi^{\prime})=e^{T_0H_p}(x,\xi)$, $(x,\xi)=e^{T_1H_p}\mathbf{j}^+(y,\eta)$. Let $e^{T_0H_p}, e^{T_1H_p}: T^*M\setminus 0 \rightarrow T^*M\setminus 0$ be diffeomorphisms generated by the Hamiltonian flow at time $T_0$ and $T_1$. Then one can check that any tangent vector, $V$, of $S_1$ has the form
\begin{equation}\begin{split}
     V=(c_0H_p(x^{\prime},\xi^{\prime})+(e^{T_0H_p})_*(x^{\prime},\xi^{\prime})( v) , v , c_1H_p(x,\xi)+(e^{T_1H_p})_*(x,\xi)( w), w)
\end{split}\end{equation}
with $w\in T_{(y,\eta)} \left(\bigsqcup_d T^*\mathbb S^1\setminus 0\right)$, $v\in T_{(x,\xi)}\Sigma_{\rm{hom}}$, $c_0, c_1\in \RR$. If $V\in TS_2$, then we have
\begin{equation}
    v = c_1H_p(x,\xi)+(e^{T_1H_p})_*(x,\xi)( w).
\end{equation}
Now let $\beta(t)=(y(t),\eta(t))$ be a curve in $\bigsqcup_d T^*\mathbb S^1\setminus 0$, $T_0(t), T_1(t)$ be smooth functions on $\RR$, such that 
\begin{equation}
    \beta(0)=(y,\eta), \ \beta^{\prime}(0)=w,\ T_0(0)=T_0, \ T_0^{\prime}(0)=c_0, \ T_1(0)=T_1,\ T_1^{\prime}(0)=c_1.
\end{equation}
Then the curve
\begin{equation}
    \gamma(t):=(e^{(T_0(t)+T_1(t))H_p}\mathbf{j}^+(\beta(t)); 
    e^{T_1(t)H_p}\mathbf{j}^+(\beta(t)); e^{T_1(t)H_p}\mathbf{j}^+(\beta(t)); \beta(t))
\end{equation}
is a curve in $S_1\cap S_2$ with
\begin{equation}
    \gamma(0)=(x^{\prime},\xi^{\prime};x,\xi;x,\xi;y,\eta), \quad \gamma^{\prime}(0)=V.
\end{equation}
Hence the intersection of $S_1$ and $S_2$ is clean with excess $e=\rm{codim}S_1+\rm{codim}S_2-\rm{codim}S_1\cap S_2=7+4-10=1$.

By \cite[Theorem 25.2.3]{horIV}, $[P,X^-]\mathbf{E}^-[P,X^+]\widehat{X}^+\mathbf{R}^+$ is a Fourier integral operator of order $-3/2+1/4+1/2=-3/4$ with canonical relation $C_0\circ C_{\widehat{X}^+\mathbf{R}^+}$.
A similar clean intersection argument shows that $([P,X^-]\mathbf{E}^-[P,X^+]\widehat{X}^+\mathbf{R}^+)^*\widehat{X}^-X^-\mathbf{R}^-$ is a Fourier integral operator of order $-3/4+1/4+1/2=0$ with canonical relation
\begin{equation}
    C_{\mathbf{S}_{\rm{rel}}}=\{ (z,\zeta;y,\eta): \mathbf{j}^-(z,\zeta)\sim \mathbf{j}^+(y,\eta) \}\subset \bigsqcup_{d} T^*\mathbb S^1\setminus 0\times \bigsqcup_{d} T^*\mathbb S^1\setminus 0.
\end{equation}
By the dynamical assumption in \S \ref{assumption} we know that for any $(y,\eta)\in \bigsqcup_d T^*\mathbb S^1\setminus 0$, there exists a unique $(z,\zeta)\in \bigsqcup_d T^*\mathbb S^1\setminus 0$ such that $(z,\zeta;y,\eta)\in C_{\mathbf{S}_{\rm{rel}}}$. Therefore $C_{\mathbf{S}_{\rm{ref}}}$ actually defines a canonical transformation.
This concludes the proof.
\end{proof}


\section{the scattering matrix for eigenvalues}
\label{ev}

In this section we study the case where $\omega$ satisfies assumptions in \S \ref{eigenvalue} and is an embedded eigenvalue of $P$. The proof of Theorem \ref{theorem1} and Theorem \ref{theorem2} are done by projecting $P$ to the orthogonal complement of the eigenspace. The key fact that makes this possible is that the eigenfunctions of $P$ are smooth, thus the microlocal structures are preserved.

\begin{proof}[Proof of Theorem \ref{theorem1} and Theorem \ref{theorem2} at embedded eigenvalues]

\noindent
\textbf{Step 1. Project away the eigenvalue.}
Assume $\omega_0$ satisfying assumptions in \S \ref{eigenvalue} is an embedded eigenvalue of $P$. Without loss of generality, we assume $\omega_0$ is of multiplicity $1$ with an eigenvector $u_0\in L^2(M)$, $\|u_0\|_{L^2(M)}=1$. By  \cite[Lemma 3.2]{force}, $u_0\in C^{\infty}(M)$. 
We omit the subscription $\omega_0$ in this proof to simplify the notation.

Let $\mathscr{D}^{\prime}_{\perp}(P,\omega_0)$ be the orthogonal complement of the eigenspace with eigenvalue $\omega_0$ as in \eqref{orthogonal}, and 
\begin{equation}
    \Pi:=I-u_0\otimes u_0:  \mathscr{D}^{\prime}(M) \to \mathscr{D}^{\prime}_{\perp}(P,\omega_0)
\end{equation}
be the projection onto $\mathscr{D}^{\prime}_{\perp}(P,\omega_0)$.
We consider the operater 
\begin{equation}
    P_{\perp}:=P\Pi: \mathscr{D}^{\prime}(M)\to \mathscr{D}_{\perp}^{\prime}(P,\omega_0).
\end{equation} 
Since $u_0\in C^{\infty}(M)$, we know the integral kernel of $u_0\otimes u_0$ is a smooth function on $M\times M$, which implies $u_0\otimes u_0\in \Psi^{-\infty}(M)$. Therefore 
\begin{equation}
\label{smoothdifference}
    P_{\perp}-P\in \Psi^{-\infty}(M).
\end{equation}
This shows that $P_{\perp}\in \Psi^0(M)$ satisfies the assumptions in \S \ref{assumption}. 

Although $0$ is an eigenvalue of $P_{\perp}$ because $P_{\perp}u_0=0$, we note that $\omega_0$ is not an eigenvalue of $P_{\perp}$. In fact, suppose $v\in L^2(M)$ and $P_{\perp}v=\omega_0 v$. Since $P_{\perp}v\in \mathscr{D}^{\prime}_{\perp}(P,\omega_0)$, we find $v\in \mathscr{D}^{\prime}_{\perp}(P,\omega_0)$. Now we know $\Pi v=v$, hence $Pv=P_{\perp}v=\omega_0 v$. If $v\neq 0$, then $v$ is an eigenvector with the eigenvalue $\omega_0$. This however contradicts the fact that $v\in \mathscr{D}^{\prime}_{\perp}(P,\omega_0)$. Thus we find $v=0$ and we conclude that $\omega_0$ is not an eigenvalue of $P_{\perp}$.

\noindent
\textbf{Step 2. Construct the operators in Theorem \ref{theorem1}.}
We can now apply the proof of Theorem \ref{theorem1} and of Theorem \ref{theorem2} in the case where $\omega$ is not an embedded eigenvalue of $P$, with $(P,\omega)$ replaced by $(P_{\perp}-\omega_0, 0)$. Let $H^{\pm}_{0,\perp}$, $\mathbf{S}_{\perp}$ be the operators satisfying conditions in Theorem \ref{theorem1} for $(P_{\perp}-\omega_0, 0)$. We show that Theorem \ref{theorem1} holds for $(P,\omega)$ with
\begin{equation}
    H^{\pm}_0:=\Pi H^{\pm}_{0,\perp}, \quad \mathbf{S}:=\mathbf{S}_{\perp}.
\end{equation}

We first clarify the definition of $H_0^{\pm}$. By the definition of $\Pi$, we know $\Pi$ induces a map between quotient spaces, which we still denote by $\Pi$,
\begin{equation}\begin{split}
\label{quotient}
    \Pi: \mathcal{D}^{\pm}(P\Pi-\omega_0,0)\to D^{\pm}(P\Pi-\omega_0,0)\cap \mathscr{D}^{\prime}_{\perp}(P,\omega_0)/C^{\infty}(M)\cap \mathscr{D}^{\prime}_{\perp}(P,\omega_0).
\end{split}\end{equation}
For the meaning of the notations, see \S \ref{mainresult}.
One can check by the definition that the latter sets are in fact $\mathcal{D}^{\pm}(P,\omega_0)$. Thus we get operators
\begin{equation}
    H_0^{\pm}=\Pi H^{\pm}_{0,\perp}: C^{\infty}(\mathbb S^1;\CC^d)\to \mathcal{D}^{\pm}(P,\omega_0).
\end{equation}

\noindent
\textbf{Step 3. Proof of Theorem \ref{theorem1}.}
We now check the conclusions in Theorem \ref{theorem1}. 

(1). The linearility of $H^{\pm}_0$ is clear. To see that $H^{\pm}_0$ are invertible, it suffices to show that the map $\Pi$ defined in \eqref{quotient} is invertible. Since $\Pi$ is induced by the projection map, we know $\Pi$ is surjective. If $u\in \mathscr{D}^{\prime}(M)$, $\Pi([u])=0$,  then $\Pi(u)\in C^{\infty}(M)$. Hence $u=\Pi(u)+(u_0\otimes u_0)(f)\in C^{\infty}(M)$, that is, $[u]=0$. This shows that $\Pi$ is injective.

(2). We first remark that 
\begin{equation}
    \mathcal{Z}(P_{\perp}-\omega_0,0)=\mathcal{Z}(P,\omega_0)
\end{equation}
where $\mathcal{Z}$ is the set of solutions defined in \S \ref{mainresult}.
In fact, suppose $u\in \mathcal{Z}(P_{\perp}-\omega_0,0)$, then
\begin{equation}
    (P_{\perp}-\omega_0)u=0 \Rightarrow u=\omega_0^{-1}P_{\perp} u \in \mathscr{D}^{\prime}_{\perp}(P,\omega_0) \Rightarrow (P-\omega_0)u=0.
\end{equation}
Hence $u\in \mathcal{Z}(P,\omega_0)$. The inclusion $\mathcal{Z}(P,\omega_0)\subset \mathcal{Z}(P_{\perp}-\omega_0,0)$ is clear by the definition.

Now if $u\in \mathcal{Z}(P,\omega_0)$, then there exists unique $f^{\pm}\in C^{\infty}(\mathbb S^1;\CC^d)$ such that
\begin{equation}
\label{uin}
    u\in H^{-}_{0,\perp}(f^-)+H^+_{0,\perp}(f^+).
\end{equation}
Apply $\Pi$ to \eqref{uin} and note that $\Pi u=u$, we have
\begin{equation}
    u\in H^-_0(f^-)+H^+_0(f^+).
\end{equation}
The uniqueness of the decomposition follows from the invertibility of $\Pi$ defined in \eqref{quotient}.

(3). Suppose $H^{\pm}_{0,\perp}$, $f^{\pm}$, $u^{\pm}$ satisfy conditions in (3) for $(P\Pi-\omega_0,0)$, then similar to (1) and (2), one can check that $H^{\pm}_{0}$, $f^{\pm}$, $\Pi u^{\pm}$ satisfy conditions in (3) for $(P,\omega_0)$.

(4) and (5). Follow from the proof of (1), (2) and (3).

\noindent
\textbf{Step 4. Proof of Theorem \ref{theorem2}.} Recall \eqref{smoothdifference}: $P_{\perp}-P\in \Psi^{-\infty}(M)$. This implies that the characteristic submainfold, the bicharacteristics, the limit cycles for $(P_{\perp}-\omega_0,0)$ is the same as for $(P,\omega_0)$. Since Theorem \ref{theorem2} applies to $\mathbf{S}_{\perp}$, we conclude that the same results hold for $\mathbf{S}$.
\end{proof}



\end{document}